\documentclass[a4paper, 11pt]{article}

\usepackage{amsmath, amsthm,amsfonts, graphicx, color, enumitem, url, hyperref}
\usepackage[justification=centering, font={sl,small}, labelfont={bf}, margin=1cm]{caption}
\usepackage[font={sl,footnotesize}]{subcaption}

\hypersetup{colorlinks=true, citecolor=darkblue, linkcolor=darkblue}

\usepackage[margin=2cm, marginparwidth=2cm,marginparsep=0cm]{geometry}




\makeatletter
\def\namedlabel#1#2{\begingroup
   \def\@currentlabel{#2}%
   \label{#1}\endgroup
}
\makeatother


\makeatletter
\renewcommand{\pod}[1]{\mathchoice
  {\allowbreak \if@display \mkern 6mu\else \mkern 6mu\fi (#1)}
  {\allowbreak \if@display \mkern 6mu\else \mkern 6mu\fi (#1)}
  {\mkern4mu(#1)}
  {\mkern4mu(#1)}
}


\newtheorem{theorem}{Theorem}[section]
\newtheorem{proposition}[theorem]{Proposition}
\newtheorem{corollary}[theorem]{Corollary}
\newtheorem{lemma}[theorem]{Lemma}

\theoremstyle{definition}
\newtheorem{definition}[theorem]{Definition}
\newtheorem{example}[theorem]{Example}

\newtheorem{remark}[theorem]{Remark}
\newtheorem{question}[theorem]{Question}

\makeatletter
\newtheorem*{rep@theorem}{\rep@title}
\newcommand{\newreptheorem}[2]{%
\newenvironment{rep#1}[1]{%
 \def\rep@title{#2 \ref{##1}}%
 \begin{rep@theorem}}%
 {\end{rep@theorem}}}
\makeatother

\newreptheorem{theorem}{Theorem}



\newcommand{\productp}[2]{\Delta_{{#1}}\times\Delta_{{#2}}} 
\newcommand{\product}{\productp{m-1}{n-1}} 
\newcommand{\productn}{\productp{n-1}{n-1}} 
\newcommand{\kskel}[2]{\Delta_{#1}^{({#2})}}
\newcommand{\semiskelp}[3]{\Delta_{{#1}}^{({#3})}\times\Delta_{{#2}}} 
\newcommand{\semiskel}{\semiskelp{m-1}{n-1}{k-1}} 

\newcommand{\bipartite}{\bipartitep{m}{n}} 
\newcommand{\bipartitep}[2]{K_{{#1},{\overline #2}}} 

\newcommand{\gr}[2]{\mathcal{G}_{{#1}\times {#2}}} 
\newcommand{\grid}{\gr{m}{n}}

\newcommand{\cM}{\mathcal{M}} 
\newcommand{\cT}{\mathcal{T}} 

\newcommand{\Match}{\mathfrak{m}} 
\newcommand{\Mext}{\cM^\text{ext}} 


\newcommand{\set}[2]{\left\{{#1}\, :\, {#2}\right\}} 


\newcommand{\semibdy}[2]{\partial\left(\Delta_{{#1}}\right)\times \Delta_{{#2}} } 

\newcommand{\bfC}{\mathbf{C}} 
 
\newcommand{\dyckptri}[1]{\boldsymbol{\mathfrak{D}}_{#1}}
\newcommand{\ratdyckptri}[2]{\boldsymbol{\mathfrak{D}}_{(#1,#2)}}
\newcommand{\ratdyckptriext}[2]{\ratdyckptri{#1}{#2}^\text{ext} }

\newcommand{\dyckptriext}[1]{\dyckptri{#1}^\text{ext} }
\newcommand{\bdydyckptriext}[1]{\dyckptri{#1}^{\partial\text{ext}} }
\newcommand{\flippedD}{\reflectbox{\ensuremath{\boldsymbol{\mathfrak{D}}}}}
\newcommand{\dyckptriflip}[1]{\flippedD_{#1}} 
\newcommand{\bdydyckptriflip}[1]{\dyckptriflip{#1}^{\partial\text{ext}} } 



\newcommand{\catnum}[1]{{C}_{#1}} 
\newcommand{\ratcatnum}[2]{{C}{(#1,#2)}} 

\newcommand{\conv}{\mathrm{conv}} 


\newcommand{\bfe}{\mathbf{e}}
\newcommand{\bfv}{\mathbf{v}}

\newcommand{\bfF}{\mathbf{F}}

\newcommand{\NN}{\mathbb{N}}
\newcommand{\RR}{\mathbb{R}}
\newcommand{\TT}{\mathbb{T}}

\newcommand{\ol}[1]{\overline{#1}}



\definecolor{darkblue}{rgb}{0,0,0.7} 
\newcommand{\darkblue}{\color{darkblue}} 
\newcommand{\defn}[1]{\emph{\darkblue #1}} 



\graphicspath{{figures/}}


\AtEndDocument{\bigskip{\footnotesize%
  \textsc{Department of Mathematics and Statistics, York University, Toronto, Ontario M3J 1P3, Canada} \par  
  \textit{E-mail address}: \texttt{ceballos@mathstat.yorku.ca} \par
  \textit{URL}: \url{http://garsia.math.yorku.ca/~ceballos/}\\[0.3cm]
    \textsc{Institut f\"ur Mathematik, Freie Universit\"at Berlin, 14195 Berlin, Germany} \par  
  \textit{E-mail address}: \texttt{arnaupadrol@zedat.fu-berlin.de} \par
  \textit{URL}: \url{http://page.mi.fu-berlin.de/arnaupadrol/} \\[0.3cm]
  \textsc{Institut f\"ur Algebra und Geometrie, Otto-von-Guericke-Universit\"at Magdeburg, 39016 Magdeburg, Germany} \par  
  \textit{E-mail address}: \texttt{camilo.sarmiento@ovgu.de} \par
  \textit{URL}: \url{http://www-e.uni-magdeburg.de/sarmient/} \par
}}

\begin{document}

\title{Dyck path triangulations and extendability}

\author{Cesar Ceballos\thanks{Supported by the government of Canada through an
NSERC Banting Postdoctoral Fellowship. He was also supported by a York
University research grant.} \and Arnau Padrol\thanks{Supported by the DFG
Collaborative Research Center SFB/TR~109 ``Discretization
in Geometry and Dynamics''.}\and Camilo Sarmiento\thanks{Partially supported by
the IMPRS of the Max Planck Institute for Mathematics in the Sciences, Leipzig,
and by the CDS, Magdeburg.}
}

\maketitle

\abstract{
We introduce the Dyck path triangulation of the cartesian product of two simplices $\productn$. The maximal simplices of this triangulation are given by Dyck paths, and its construction naturally generalizes to produce triangulations of $\productp{rn-1}{n-1}$ using rational Dyck paths. Our study of the Dyck path triangulation is motivated by extendability problems of partial triangulations of products of two simplices. We show that whenever $m\geq k>n$, any triangulation of $\semiskel$ extends to a unique triangulation of $\product$. Moreover, with an explicit construction, we prove that the bound $k>n$ is optimal. We also exhibit interesting interpretations of our results in the language of tropical oriented matroids, which are analogous to classical results in oriented matroid theory. 
}


\section{Introduction}

The cartesian product of a standard $(m-1)$-simplex with a standard
$(n-1)$-simplex is the $(m+n-2)$-dimensional polytope
\[
\product:=\conv \{(\bfe_i,\bfe_j)\colon \bfe_i\in \Delta_{m-1},\
\bfe_j\in\Delta_{n-1}\}\subset\RR^{m+n},
\]
where $\bfe_i$ and $\bfe_j$ range over the standard basis vectors of
$\RR^m$ and $\RR^n$, respectively. 

Triangulations of the product of two simplices are intricate objects
that have been extensively studied with various purposes.
They are a key ingredient for understanding
triangulations of products of polytopes~\cite{DeLoera1996,Haiman1991,
OrdenSantos2003,Santos2000}. Via the Cayley trick, they are in bijection with fine mixed subdivisions of a dilated simplex $m\Delta_{n-1}$~\cite{Santos2005}, which provides a relation to tropical (pseudo) hyperplane arrangements and tropical oriented 
matroids~\cite{ArdilaDevelin2009,SturmfelsDevelin2004}. Moreover, they have also attracted interest in algebraic
geometry and commutative algebra~\cite{BabsonBillera,ConcaHostenThomas2005,GKZ,Sturmfels1996} and in Schubert calculus~\cite{ArdilaBilley2007}.

In this paper, we present an intriguing family of triangulations of~$\productn$
that we call \defn{Dyck path triangulations}, whose maximal simplices are described in
terms of Dyck paths in a $n \times n$ grid under a cyclic action. The maximal simplices of the Dyck path triangulation
of~$\productp{2}{2}$ and the corresponding fine mixed subdivision of~$3\Delta_2$
are illustrated in Figure~\ref{fig:dyck3_introduction}.
 
\begin{figure}[h]
\centering
 \includegraphics[width=0.45\textwidth]{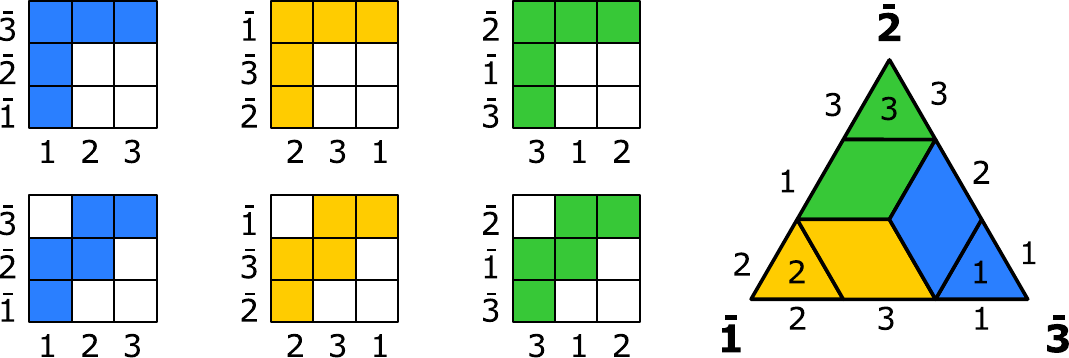}  
 \caption{The Dyck path triangulation of $\productp{2}{2}$ drawn as a subdivision of 3$\Delta_2$.}
 \label{fig:dyck3_introduction} 
 \end{figure}

Besides the combinatorial beauty of these triangulations, they are motivated by extendability problems of partial triangulations of $\product$.
The $(k-1)$-skeleton of $\Delta_{m-1}$, which we denote by~$\kskel{m-1}{k-1}$, is the polyhedral complex of all faces of $\Delta_{m-1}$ of dimension less than or equal to $k-1$.
A \defn{partial triangulation} of $\product$ is a triangulation of the polyhedral complex~$\semiskel$. Such a triangulation is said to be \defn{extendable} if it is equal to the restriction of a
triangulation of~$\product$ to~$\semiskel$. The smallest example of a non-extendable partial triangulation is shown in Figure~\ref{fig:nonextendable_a}; a more interesting example due to Santos~\cite{Santos2013} is shown in Figure~\ref{fig:nonextendable_b}. 

\begin{figure}[htbp]
 \centering
 \begin{subfigure}{0.45\textwidth} \centering
 \includegraphics[width=0.32\textwidth]{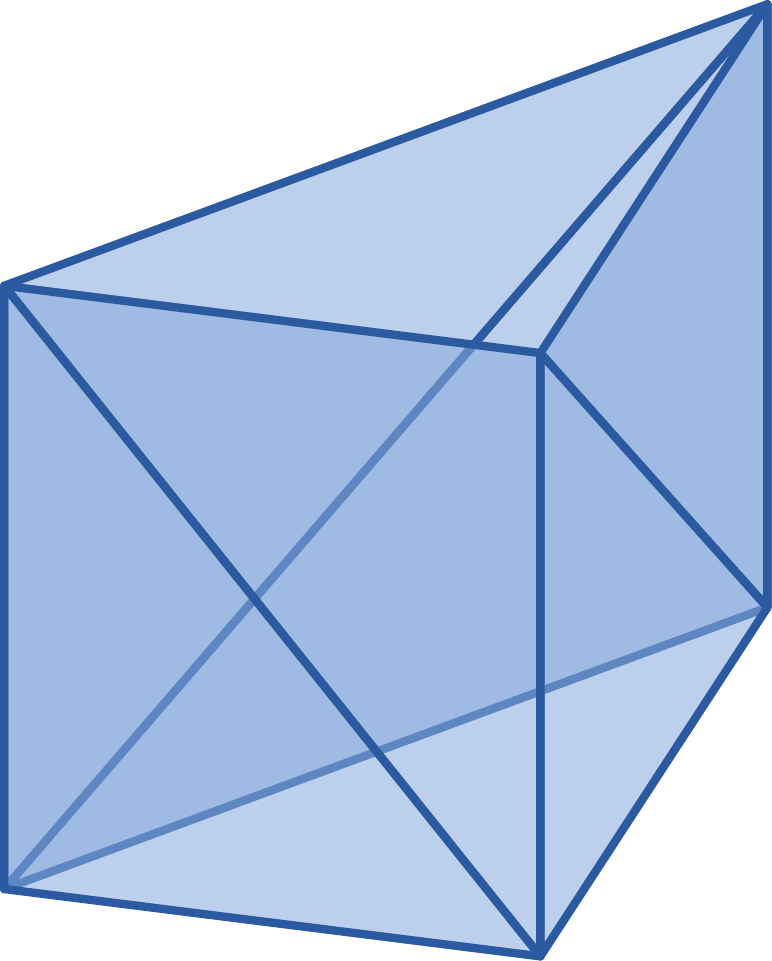}  
 \caption{A non-extendable triangulation of~$\semiskelp{2}{1}{1}$.}
 \label{fig:nonextendable_a} 
 \end{subfigure} 
 \begin{subfigure}{0.54\textwidth} \centering
 \includegraphics[width=1\textwidth]{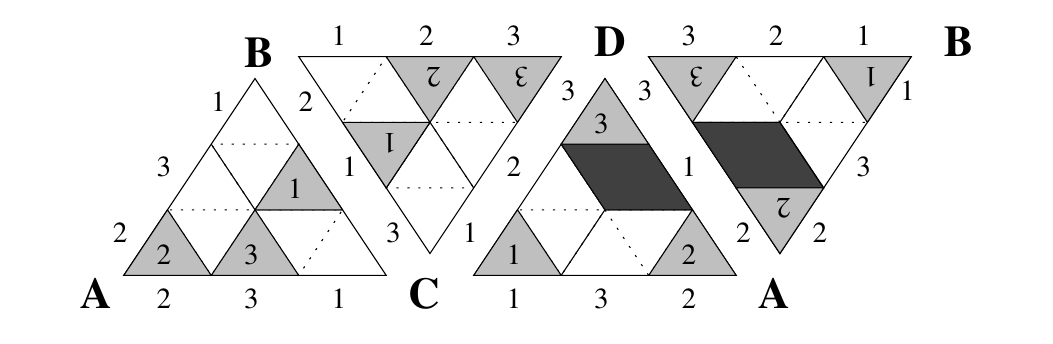}  
 \caption{A non-extendable triangulation of~$\semiskelp{3}{2}{2}$ (Santos).}
 \label{fig:nonextendable_b} 
 \end{subfigure} 
 \caption{Two examples of non-extendable partial triangulations. The triangulation in part (b), due to Francisco Santos~\cite{Santos2013}, is shown as a subdivision of $3\Delta_{3}^{(2)}= \partial \left( 3\Delta_{3} \right)$.}
 \end{figure}

The question of extendability of triangulations of $\semiskel$ was first systematically considered for $k=2$ by Ardila and Ceballos in~\cite{ArdilaCeballos2013}, who completely characterized the extendable triangulations of~$\semiskelp{2}{n-1}{1}$. There, in an attempt to prove the \emph{Spread Out Simplices Conjecture} of Ardila and Billey~\cite[Conjecture 7.1]{ArdilaBilley2007}, the authors formulated the \emph{Acyclic System Conjecture}~\cite[Conjecture 5.7]{ArdilaCeballos2013}, which concerned a sufficient condition for the extendability of triangulations of $\semiskelp{m-1}{n-1}{1}$. Shortly after, however, the Acyclic System Conjecture was disproved by Santos~\cite{Santos2013}. These results motivate the search for necessary and sufficient conditions for extendability.

Our first contribution is the following extendability theorem.

\begin{reptheorem}{theorem:extend}
Let $m,n,k$ be positive integers such that $m\geq k>n$. Every triangulation of~$\semiskel$ extends to a unique triangulation of $\product$.
\end{reptheorem}

In considering whether the bound $k>n$ in Theorem~\ref{theorem:extend} is optimal, we are led to the Dyck path triangulation of~$\productp{n-1}{n-1}$. This triangulation is our main tool to explicitly construct a family of partial triangulations that shows that the assertion of Theorem~\ref{theorem:extend} does not generally hold when~$m>k=n$.
\begin{reptheorem}{theorem:non-extend}
For every $n\geq 2$ there is a non-extendable triangulation of~$\semibdy{n}{n-1}$.
\end{reptheorem}

As suggested by its name, the Dyck path triangulation is based on Dyck paths and
is related to Catalan combinatorics. We devote the rest of the paper to the
study of this triangulation and its relatives. In particular, we present a
natural generalization in terms of \emph{rational Dyck paths} in the ``Fuss-Catalan" case $(rn,n)$. 
It would be interesting to know if it can be further generalized to other families
of rational Dyck paths.

Via the Cayley trick~\cite{Santos2005}, Theorems~\ref{theorem:extend}~and~\ref{theorem:non-extend} transform into statements about the extendability of ``partial fine mixed subdivisions'' of $n\Delta_{m-1}^{(k-1)}$, coming from triangulations of $\semiskel$, which we refer to as \defn{authentic subdivisions}.

\begin{corollary}
For $m\geq k>n$, every authentic subdivision of $n\Delta_{m-1}^{(k-1)}$ can be extended to a unique fine mixed subdivision of $n\Delta_{m-1}$.
Moreover, for every $n\geq 2$ there is a non-extendable authentic subdivision of $\partial \left( n\Delta_n \right)$.
\end{corollary}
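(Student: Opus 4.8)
The plan is to deduce both halves of the corollary directly from Theorems~\ref{theorem:extend} and~\ref{theorem:non-extend} by transporting them through the Cayley trick~\cite{Santos2005}. Recall that the Cayley trick provides a poset isomorphism between the subdivisions of $\product$ and the mixed subdivisions of $n\Delta_{m-1}$ (regarding $\product$ via the Cayley embedding of $n$ copies of $\Delta_{m-1}$), under which triangulations correspond to fine mixed subdivisions. By definition, an authentic subdivision of $n\kskel{m-1}{k-1}$ is precisely the image of a triangulation of $\semiskel$ under this correspondence, so the first thing to record is that the Cayley trick restricts to a bijection between triangulations of $\semiskel$ and authentic subdivisions of $n\kskel{m-1}{k-1}$.

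The key point to verify is that the Cayley trick is \emph{local} with respect to the faces of the first factor: for every subset $S\subseteq\{1,\dots,m\}$, restricting a triangulation of $\product$ to the face $\Delta_S\times\Delta_{n-1}$ corresponds, under the Cayley trick applied to the $n$ copies of $\Delta_S$, to restricting the associated fine mixed subdivision of $n\Delta_{m-1}$ to the face $n\Delta_S$. This follows from the explicit cell-by-cell description of the Cayley trick, since a mixed cell of $n\Delta_{m-1}$ is contained in $n\Delta_S$ exactly when the simplex of the triangulation it comes from uses only vertices of $\Delta_S\times\Delta_{n-1}$. Taking the union over all $S$ with $|S|\le k$, a triangulation $T$ of $\semiskel$ is the restriction of a triangulation $\widetilde{T}$ of $\product$ if and only if the authentic subdivision corresponding to $T$ is the restriction to $n\kskel{m-1}{k-1}$ of the fine mixed subdivision corresponding to $\widetilde{T}$; furthermore, these extensions are matched bijectively.

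Granting this dictionary, the corollary is a translation of the two theorems. For the first assertion, let $m\ge k>n$ and let $\cS$ be an authentic subdivision of $n\kskel{m-1}{k-1}$, coming from a triangulation $T$ of $\semiskel$. By Theorem~\ref{theorem:extend}, $T$ extends to a unique triangulation of $\product$, and hence $\cS$ extends to a unique fine mixed subdivision of $n\Delta_{m-1}$. For the second assertion, fix $n\ge 2$ and invoke Theorem~\ref{theorem:non-extend} to obtain a non-extendable triangulation of $\semibdy{n}{n-1}$. Since $\partial(\Delta_n)=\kskel{n}{n-1}$, this is a non-extendable triangulation of $\semiskelp{n}{n-1}{n-1}$, i.e., the case $m=n+1$, $k=n$ (so that $m>k=n$); its image under the Cayley trick is an authentic subdivision of $n\kskel{n}{n-1}=\partial(n\Delta_n)$ which, by the dictionary, is non-extendable. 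The only genuine work is the locality statement above: one must check that the standard construction of the Cayley trick really does commute with restriction to faces of the first simplex, rather than merely asserting it; the remainder is bookkeeping.
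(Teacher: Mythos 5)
Your proposal is correct and follows exactly the route the paper intends: the corollary is stated there as an immediate translation of Theorems~\ref{theorem:extend} and~\ref{theorem:non-extend} through the Cayley trick, with no further argument given. The only substantive point you add --- that the Cayley trick commutes with restriction to the faces $\Delta_S\times\Delta_{n-1}$, so extendability and uniqueness transfer --- is the right one, and your cell-by-cell justification of it is sound.
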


Apart from providing a characterization of extendable of triangulations of $\semiskel$, our results admit additional interpretations that render them of broader interest.

On the one hand, Theorems~\ref{theorem:extend} and~\ref{theorem:non-extend} naturally translate into the language of tropical oriented matroids
(which we abbreviate as TOMs). This
concept was introduced by Ardila and Develin as an analogue of classical
oriented matroids for the tropical semiring~\cite{ArdilaDevelin2009}. The
combinatorics of an arrangement of $m$ tropical pseudohyperplanes in the
tropical space $\TT^{n-1}$ is captured by its TOM. 
The \emph{Topological Representation Theorem} establishes a correspondence
between TOMs (with parameters $(m,n)$) and subdivisions of
$\product$~\cite{ArdilaDevelin2009,Horn2012,OhYoo2011}. More concretely, triangulations of~$\product$ correspond
to generic TOMs and triangulations of $\semiskel$ correspond
to compatible collections of generic subarrangements of $k$ pseudohyperplanes; in this context, Lemma~\ref{lem:semiskel-unicity} reads as follows.

\begin{corollary}\label{cor:semiskel-unicity}
The TOM of any generic arrangement of tropical
pseudohyperplanes in $\TT^{n-1}$ is completely determined by the TOMs of
its subarrangements of $n$ pseudohyperplanes.
\end{corollary}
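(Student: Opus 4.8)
The plan is to obtain Corollary~\ref{cor:semiskel-unicity} from Lemma~\ref{lem:semiskel-unicity} by translating through the correspondence of the \emph{Topological Representation Theorem}, with no extra combinatorial input. Recall that this theorem identifies generic tropical oriented matroids with parameters $(m,n)$ with triangulations of $\productp{m-1}{n-1}$, and that deleting all but $k$ of the pseudohyperplanes corresponds, under this identification, to restricting the triangulation to the subcomplex $\Delta_S\times\Delta_{n-1}$ indexed by the surviving $k$-subset $S\subseteq[m]$; hence a compatible collection of generic TOMs of $k$-element subarrangements corresponds to a triangulation of $\semiskelp{m-1}{n-1}{k-1}$. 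First I would record this dictionary in the case $k=n$: knowing the TOMs of all $n$-element subarrangements of a generic arrangement $\mathcal{A}$ of $m$ pseudohyperplanes in $\TT^{n-1}$ is the same as knowing the restriction of the triangulation $T_{\mathcal{A}}$ associated with $\mathcal{A}$ to the $(n-1)$-skeleton $\semiskelp{m-1}{n-1}{n-1}$. These subarrangement TOMs are automatically compatible --- they are all obtained by deletion from the one arrangement $\mathcal{A}$, so they do glue to a genuine triangulation of $\semiskelp{m-1}{n-1}{n-1}$.

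With the dictionary in place, the phrase \emph{``completely determined by''} unwinds as the assertion that if two generic arrangements of $m$ pseudohyperplanes in $\TT^{n-1}$ induce the same TOM on every $n$-element subarrangement, then the associated triangulations of $\productp{m-1}{n-1}$ agree. Equivalently, it is the injectivity of the restriction map from triangulations of $\productp{m-1}{n-1}$ to triangulations of $\semiskelp{m-1}{n-1}{n-1}$; equivalently again, the statement that every triangulation of $\semiskelp{m-1}{n-1}{n-1}$ extends to \emph{at most} one triangulation of $\productp{m-1}{n-1}$, which is precisely Lemma~\ref{lem:semiskel-unicity}. Only the uniqueness half is used here: by Theorem~\ref{theorem:non-extend} the $n$-subarrangement TOMs cannot be prescribed freely (existence of an extension already fails for $k=n$), so the corollary cannot be strengthened to say that these data form an unconstrained parameter.

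Thus, granting Lemma~\ref{lem:semiskel-unicity}, essentially nothing remains but bookkeeping. The only points needing care lie in the dictionary rather than in the corollary proper: checking that ``deletion of pseudohyperplanes'' really matches ``restriction to a face of the first simplex factor'' (as developed in~\cite{ArdilaDevelin2009,Horn2012,OhYoo2011}), and checking that this identification is functorial enough that a system of subarrangement TOMs compatible on pairwise intersections of index sets corresponds to a triangulation of the whole $(n-1)$-skeleton, not just to an uncoordinated family of triangulations of the top faces $\Delta_S\times\Delta_{n-1}$ with $|S|=n$. I expect the real work --- the reason the corollary holds at all --- to be entirely inside Lemma~\ref{lem:semiskel-unicity}: the rigidity phenomenon that a partial triangulation supported only on the $(n-1)$-skeleton already pins down any triangulation of the full product extending it.
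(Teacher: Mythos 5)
Your proposal is correct and matches the paper's own (implicit) argument: the paper presents Corollary~\ref{cor:semiskel-unicity} as nothing more than the translation of Lemma~\ref{lem:semiskel-unicity} through the Topological Representation Theorem dictionary, with all the substance residing in that lemma, exactly as you say. Your additional bookkeeping about deletion corresponding to restriction to faces $\Delta_S\times\Delta_{n-1}$ is a faithful unpacking of what the paper leaves tacit, not a different route.
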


If $\cM$ is a TOM of an arrangement whose pseudohyperplanes have labels in 
$[m]$, denote by $\cM|_S$ the TOM of the subarrangement corresponding to the
hyperplanes with labels in $S\subseteq [m]$.
This turns
Theorems~\ref{theorem:extend}
and~\ref{theorem:non-extend} into the following statements,
respectively.

\begin{corollary}\label{cor:extend}
For each $S\in \binom{[m]}{n+1}$, let $\cM_S$ be the TOM of a generic
arrangement of $n+1$
pseudohyperplanes in $\TT^{n-1}$ with labels in $S$. If the matroids in this
collection are compatible, i.e. $\cM_S|_{S\cap T}=\cM_T|_{S\cap T}$ for every
$T,S\in \binom{[m]}{n+1}$, then there exists a unique arrangement of $m$
pseudohyperplanes in $\TT^{n-1}$ whose TOM $\cM$ fulfills $\cM|_S=\cM_S$.
\end{corollary}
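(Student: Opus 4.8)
The plan is to derive Corollary~\ref{cor:extend} directly from Theorem~\ref{theorem:extend} by translating the hypotheses and conclusion through the Topological Representation Theorem for tropical oriented matroids. Recall that, by that correspondence, a generic arrangement of $j$ tropical pseudohyperplanes in $\TT^{n-1}$ with labels in a set $S$ of size $j$ corresponds to a triangulation of $\Delta_{j-1}\times\Delta_{n-1}$, and more generally a \emph{compatible} collection of such generic subarrangements, one for each $k$-subset, corresponds to a triangulation of the polyhedral complex $\semiskel$ (with $m$ playing the role of the ambient label count). The first step is therefore to record this dictionary precisely and to verify that the compatibility condition $\cM_S|_{S\cap T}=\cM_T|_{S\cap T}$ is exactly the gluing condition that makes the union of the corresponding triangulated products of simplices a well-defined triangulation of $\semiskel$: on each common face $\Delta_{S\cap T}\times\Delta_{n-1}$ the two triangulations induced from $\cM_S$ and $\cM_T$ must agree, and this is precisely what the restriction identity encodes.

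Next I would specialize to $k=n+1$. With this choice the hypothesis $m\geq k>n$ of Theorem~\ref{theorem:extend} becomes $m\geq n+1$, i.e. the only interesting case (when $m\leq n$ there is nothing to subdivide beyond a single product and the statement is trivial or vacuous, which I would note in one line). Given the compatible collection $\{\cM_S\}_{S\in\binom{[m]}{n+1}}$, the dictionary from the previous paragraph produces a triangulation $\cT$ of $\semiskelp{m-1}{n-1}{n}$. Theorem~\ref{theorem:extend} then applies verbatim: $\cT$ extends to a unique triangulation $\cT'$ of $\product$. Translating $\cT'$ back through the Topological Representation Theorem yields a unique generic arrangement of $m$ pseudohyperplanes in $\TT^{n-1}$, whose TOM $\cM$ restricts correctly; the restriction $\cM|_S=\cM_S$ for each $S$ corresponds under the dictionary to the fact that the restriction of $\cT'$ to $\semibdy{}{}$—more precisely to the sub-skeleton indexed by $S$—is $\cT$, which is how $\cT$ was obtained from the $\cM_S$.

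The uniqueness clause deserves a separate step: I would argue that if $\cM$ and $\cM''$ are two arrangements of $m$ pseudohyperplanes with $\cM|_S=\cM_S=\cM''|_S$ for all $S\in\binom{[m]}{n+1}$, then their associated triangulations $\cT'$ and $\cT''$ of $\product$ both restrict to the same triangulation $\cT$ of $\semiskelp{m-1}{n-1}{n}$, so by the uniqueness part of Theorem~\ref{theorem:extend} they coincide, whence $\cM=\cM''$ by injectivity of the Topological Representation correspondence. This also re-proves Corollary~\ref{cor:semiskel-unicity} as the $k=n$ boundary case of the same dictionary (that statement is credited to Lemma~\ref{lem:semiskel-unicity}, so I would simply cross-reference rather than reprove it).

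The main obstacle, and the only genuinely non-formal point, is pinning down the precise form of the Topological Representation Theorem being invoked: the literature (\cite{ArdilaDevelin2009,Horn2012,OhYoo2011}) states it for full arrangements and full triangulations of $\product$, and one needs the ``partial'' version identifying compatible collections of generic $k$-subarrangements with triangulations of $\semiskel$. I expect this has already been set up in the body of the paper (it is alluded to right before the corollary), so the real work is to cite it correctly and to check that ``compatible'' in the TOM sense ($\cM_S|_{S\cap T}=\cM_T|_{S\cap T}$) matches ``compatible'' in the triangulation-of-a-complex sense (agreement of induced triangulations on shared faces). Once that identification is in hand, the corollary is a direct transcription of Theorem~\ref{theorem:extend}, and no further combinatorial argument is needed.
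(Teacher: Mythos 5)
Your proposal is correct and follows essentially the same route as the paper: the paper offers no separate proof of this corollary, deriving it purely as a translation of Theorem~\ref{theorem:extend} (specialized to $k=n+1$) through the stated dictionary in which generic TOMs correspond to triangulations of $\product$ and compatible collections of generic subarrangements of $k$ pseudohyperplanes correspond to triangulations of $\semiskel$ — exactly the two-way transcription you carry out, including the uniqueness clause via Lemma~\ref{lem:semiskel-unicity}. Your identification of the ``partial'' Topological Representation correspondence and of the matching between the two notions of compatibility merely makes explicit what the paper asserts without detail in the paragraph preceding the corollary.
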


\begin{corollary}\label{cor:non-extend}
There exists a collection of pairwise compatible TOMs on the subsets
in $\binom{[n+1]}{n}$ that cannot be completed to the TOM of an arrangement
of $n+1$
pseudohyperplanes in $\TT^{n-1}$.
\end{corollary}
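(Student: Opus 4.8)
The plan is to obtain Corollary~\ref{cor:non-extend} as the tropical--oriented--matroid reformulation of Theorem~\ref{theorem:non-extend}, using the dictionary recalled in the introduction: the Cayley trick~\cite{Santos2005} together with the Topological Representation Theorem~\cite{ArdilaDevelin2009,Horn2012,OhYoo2011}. I would first record the precise form needed. For $m\ge k\ge 1$: (i) triangulations of $\productp{m-1}{n-1}$ correspond bijectively to generic TOMs of arrangements of $m$ pseudohyperplanes in $\TT^{n-1}$ labelled by $[m]$; (ii) triangulations of $\semiskel$ correspond bijectively to families $\{\cM_S\}_{S\in\binom{[m]}{k}}$ of generic TOMs of $k$-pseudohyperplane subarrangements that are pairwise compatible, $\cM_S|_{S\cap T}=\cM_T|_{S\cap T}$; and (iii) under these bijections the restriction of a triangulation of $\productp{m-1}{n-1}$ to a face $\Delta_S\times\Delta_{n-1}$ matches the deletion of the pseudohyperplanes not labelled by $S$, so that a triangulation of $\semiskel$ is extendable (in the sense of the introduction) if and only if its family has the form $\{\cM|_S\}_{S\in\binom{[m]}{k}}$ for some generic TOM $\cM$ of an $m$-arrangement. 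Items (i) and (ii) are just repackagings of the cited results; (iii) requires only checking that the Cayley-trick bijection intertwines face restriction with pseudohyperplane deletion.

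Next I would specialise to $m=n+1$ and $k=n$. Since the $(n-1)$-skeleton of $\Delta_n$ is $\partial\Delta_n$, we have $\semibdy{n}{n-1}=\semiskelp{n}{n-1}{n-1}$, and the index set $\binom{[m]}{k}$ becomes $\binom{[n+1]}{n}$. Thus (ii) sends a triangulation of $\semibdy{n}{n-1}$ to a pairwise compatible family of generic TOMs on the subsets in $\binom{[n+1]}{n}$, each being the TOM of an arrangement of $n$ pseudohyperplanes in $\TT^{n-1}$. By Theorem~\ref{theorem:non-extend} there is such a triangulation that is not extendable, and by (iii) the corresponding family is not of the form $\{\cM|_S\}_S$ for any \emph{generic} TOM $\cM$ of an $(n+1)$-pseudohyperplane arrangement.

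It remains to drop the word ``generic'', since Corollary~\ref{cor:non-extend} forbids completion to an arbitrary TOM. For this I would prove a small polytopal fact: every subdivision $\cS$ of $\Delta_n\times\Delta_{n-1}$ whose restriction to each facet $\Delta_{[n+1]\setminus\{i\}}\times\Delta_{n-1}$ of $\partial(\Delta_n)\times\Delta_{n-1}$ is a triangulation is already a triangulation. Indeed, a cell of $\cS$ equals $\conv\{(\bfe_i,\bfe_j)\colon (i,j)\in G\}$ for a bipartite graph $G\subseteq[n+1]\times[n]$, and such a cell is a simplex precisely when $G$ is a forest; a non-simplex cell therefore contains a cycle in $G$, which, being bipartite, uses the same number of vertices on each side, hence at most $n<n+1$ on the $[n+1]$-side, so it avoids some $i$. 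The corresponding face of that cell is then a non-simplex cell of $\cS$ restricted to $\Delta_{[n+1]\setminus\{i\}}\times\Delta_{n-1}$, a contradiction. Via (i) and (iii) this says precisely that any TOM $\cM$ of an $(n+1)$-arrangement all of whose restrictions $\cM|_S$, $S\in\binom{[n+1]}{n}$, are generic is itself generic; hence the family produced above, having no generic completion, has no completion at all, which is the assertion of Corollary~\ref{cor:non-extend}.

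I expect the main obstacle to be not any single hard step but assembling the dictionary (i)--(iii) cleanly enough that ``extendable'' on the polytopal side lines up with ``completable'' on the TOM side and the genericity conditions match on the nose; the forest/cycle argument that upgrades the conclusion past non-generic completions is short, and is the point at which the hypothesis $m>n$ (here $m=n+1$) is actually used.
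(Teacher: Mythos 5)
Your proposal is correct and follows essentially the same route as the paper, which derives Corollary~\ref{cor:non-extend} purely via the dictionary between triangulations of $\semiskel$ and compatible collections of generic TOMs (Cayley trick plus the Topological Representation Theorem), applied to the non-extendable triangulation of Theorem~\ref{theorem:non-extend}, with no further written proof. Your extra forest/cycle argument ruling out completion to a \emph{non-generic} TOM addresses a point the paper leaves implicit, and it is sound: a cycle in the bipartite graph of a non-simplex cell of a subdivision of $\Delta_{n}\times\Delta_{n-1}$ uses at most $n$ of the $n+1$ left vertices, hence survives in the restriction to some facet of $\semibdy{n}{n-1}$, so genericity of all restrictions forces genericity of the whole TOM.
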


These corollaries should be compared with analogue results in classical oriented
matroid
theory: every oriented matroid of rank~$n-1$ is completely determined by its
submatroids with $n$ elements  and every compatible
collection of submatroids with $n+1$ elements can be completed to a full
oriented matroid (cf.~\cite[Corollaries 3.6.3 and 3.6.4]{OrientedMatroids1993}).

On the other hand, Theorem~\ref{theorem:extend} can be regarded as a ``finiteness'' result for triangulations of $\product$: it says that, as long as $m\geq n+1$, triangulations of $\product$ are ``built'' from the collection of triangulations of $\productp{n}{n-1}$, no matter how large $m$ is. From this viewpoint, Theorem~\ref{theorem:extend} should be contrasted with recent results in commutative algebra regarding finiteness properties of the generating sets of certain families of polynomial ideals (see, for instance,~\cite{HillarSullivant12, HostenSullivant07, Snowden13}).

Here is the layout of the paper. The next section
contains some preliminaries concerning notation and
representations for triangulations of products of simplices. Section~\ref{sec:extend} contains the
proof of Theorem~\ref{theorem:extend}. The Dyck path triangulation is then
presented in Section~\ref{sec:DyckTriangulation}, along with the explicit
construction behind Theorem~\ref{theorem:non-extend}. The proof of the fact that
the Dyck path triangulation is indeed a triangulation is postponed to
Section~\ref{sec:matchings}, which also includes other revelant proofs;
in Section~\ref{sec:regularity} we prove that
the Dyck path triangulation is regular. Finally,
Section~\ref{sec:generalization} is devoted to generalizations of the
Dyck path triangulation.

\subsection*{Acknowledgements}

The authors want to thank Francisco Santos for many interesting and
enlightening conversations.


\section{Preliminaries}
In this section, in order to set our notation and
conventions, we recall some well-known facts related to triangulations of
$\product$ (we refer to~\cite[Section 6.2]{DeLoeraRambauSantos} for a more
detailed exposition).

\subsection*{Bipartite graph representation}

Let $\bipartite$ be the complete bipartite graph on $m+n$ vertices, whose parts
we label by $[m]$ and $[\overline n]$\footnote{Throughout we use overlined numbers and variables to
distinguish the vertices of both factors}. A vertex $(\bfe_i,\bfe_j)$ of $\product$
can be represented as the undirected edge $(i,\overline j)$ of $\bipartite$. It
turns out that independent sets, spanning sets and circuits of $\product$ are
easy to read from the \defn{bipartite graph representation}.

\begin{lemma}[{\cite[Lemma 6.2.8]{DeLoeraRambauSantos}}]\label{lem:bipartite}
 In the bipartite graph representation:
\begin{enumerate}
 \item A subset of vertices of $\product$ is affinely independent if and only if
the corresponding subgraph has no cycles
 . In particular, affine bases correspond to spanning trees.
 \item A subset of vertices of $\product$ is affinely spanning if and only if
the corresponding subgraph is connected and spanning.
 \item\label{it:bipartitecircuits} A subset of vertices of $\product$ is a
circuit if and only if the corresponding subgraph is a cycle. The positive and
negative elements of the circuit alternate along the cycle. In other words, two
edges of the cycle have the same sign (as elements in the circuit) if and only
if they are an even number of steps away from each other.
 In the standard realization of $\product$, all the coefficients of the affine
dependences corresponding to these circuits are $\pm 1$.
 \end{enumerate}
\end{lemma}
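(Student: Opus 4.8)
The plan is to reduce all three assertions to one piece of linear algebra about the unsigned vertex--edge incidence matrix of a bipartite graph, and then to read off each item by a dimension count.

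First I would translate affine dependences into edge weightings. Writing a vertex $(\bfe_i,\bfe_j)$ as a point of $\RR^m\times\RR^n$, a tuple of reals $(w_{(i,\overline j)})$ indexed by a set $S$ of vertices is an affine dependence exactly when $\sum_{(i,\overline j)\in S} w_{(i,\overline j)}\,(\bfe_i,\bfe_j)=\mathbf 0$; the normalization $\sum w_{(i,\overline j)}=0$ is then automatic, since it is obtained by summing the coordinates of the $\RR^m$-block of this equation. Viewing $S$ as a subgraph $G\subseteq\bipartite$, the $\RR^m$- and $\RR^n$-blocks say precisely that at every vertex of $G$ the weights of the incident edges sum to zero; that is, $w$ lies in the kernel of the unsigned incidence matrix $B(G)$.

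Next I would record the fact that, for a bipartite $G$, the space $\ker B(G)$ has dimension $|E(G)|-|V(G)|+c(G)$, where $c(G)$ is the number of connected components --- it is the usual cycle space of $G$. Indeed, orienting each edge of $G$ from its $[m]$-endpoint toward its $[\overline n]$-endpoint and negating the rows of $B(G)$ indexed by $[m]$ turns $B(G)$ into the signed incidence matrix of this orientation; row negations change neither rank nor kernel, and the signed incidence matrix of a graph has rank $|V(G)|-c(G)$, so $\dim\ker B(G)=|E(G)|-|V(G)|+c(G)$. This is where bipartiteness of $\bipartite$ (hence of every subgraph) is essential: for a connected non-bipartite graph the unsigned incidence matrix would instead have full rank $|V(G)|$.

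From this the three items follow. (1) $S$ is affinely independent iff $\ker B(G)=0$ iff $|E(G)|=|V(G)|-c(G)$ iff $G$ is a forest, i.e. acyclic; and since $\dim\product=m+n-2$, an affine basis is an independent set of $m+n-1$ vertices, which forces $G$ to be a forest with $m+n-1$ edges whose $m+n$ endpoints exhaust $\bipartite$, i.e. a spanning tree (and conversely). (2) The affine dimension of $\conv S$ equals (size of a maximal independent subset of $S$)$\,-1=$(edges of a spanning forest of $G$)$\,-1=|V(G)|-c(G)-1$, which equals $m+n-2$ iff $G$ is spanning and connected. (3) A circuit is a minimal dependent set: dependence means $G$ contains a cycle, and minimality forces $G$ to be a single cycle (any edge lying on no cycle, or any edge of one of two disjoint cycles, could be deleted while keeping $G$ dependent). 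Since $G\subseteq\bipartite$, this cycle has even length $2\ell$, and assigning to its edges, in cyclic order, the weights $(-1)^1,(-1)^2,\dots,(-1)^{2\ell}$ gives a dependence: the two edges at each vertex get opposite signs, which closes up consistently precisely because $2\ell$ is even. As $\ker B(G)$ is now $1$-dimensional, this is the unique dependence up to scaling, so its coefficients are all $\pm1$, the two sign classes alternate along the cycle, and two edges share a sign iff they are an even number of steps apart. The only step requiring genuine care is the rank computation in the third paragraph, and even there the subtlety is the bipartite bookkeeping already flagged --- which, pleasingly, resurfaces as the even-cycle property driving the alternation in item~(3).
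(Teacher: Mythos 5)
Your proof is correct. Note that the paper itself gives no proof of this lemma --- it is quoted verbatim from \cite[Lemma~6.2.8]{DeLoeraRambauSantos} --- so there is nothing to compare against except the standard argument, which is exactly the one you give: affine dependences among the vertices $(\bfe_i,\bfe_j)$ are elements of the kernel of the unsigned incidence matrix of the subgraph, bipartiteness lets you convert that matrix into a signed incidence matrix of rank $|V|-c$, and all three items then follow from the resulting identification of the (oriented) matroid of $\product$ with the graphic matroid of $\bipartite$. Your dimension counts and the even-cycle/alternating-sign argument in item (3) are all sound.
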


We will often work with induced subgraphs of $\bipartite$, which we will write
as $\bipartitep{I}{J}$, where $I\subset[m]$ and $\overline J\subset[\overline
n]$. These provide the bipartite graph representation of the faces of $\product$
of the form $\productp{I}{J}$, where $\Delta_I:=\{\bfe_i\in\Delta_{m-1}\colon
i\in I\}$. 

\subsection*{Grid representation}

The \defn{$m\times n$ grid}, which we denote by $\grid$, is a rectangular array
of width $m$ and height $n$ composed of $mn$ unit squares. Every unit square in
$\grid$ has a position $(i,\overline j)$ in the grid, where index $i$ increases to the
right and index $\bar j$ increases upwards (i.e., in the usual cartesian way). Thus,
the point $(\bfe_i,\bfe_j)$ in $\product$ is represented by the square at
position $(i,\overline j)$ in $\grid$. The resulting \defn{grid representation} for
subsets of vertices of $\product$ is mainly used in this paper to describe
certain triangulations of $\product$. 

\begin{example}[Staircase triangulation of $\productp{m-1}{n-1}$]
\label{example:staircase}
Consider all monotone paths in $\gr{m}{n}$ from $(1,\overline 1)$ to $(m,\overline n)$. These are
sequences of squares $\{(i_k,\overline j_k)\}_{1\leq k \leq m+n-1}$, such that
${(i_1,\overline j_1)=(1,\overline 1)}$, $(i_{m+n-1},\overline j_{m+n-1})=(m,\overline n)$ and such that
$(i_{k+1},\overline j_{k+1})$ is either $(i_{k}+1,\overline j_{k})$ or $(i_{k},\overline j_{k}+1)$ for $1\leq
k<m+n-1$. Every such monotone path, or staircase, defines a $(m+n-2)$-simplex of
$\product$, whose points are labelled by the squares in the path.

It is easy to see that this collection of simplices forms a triangulation of
$\productp{m-1}{n-1}$, which is called the \defn{staircase triangulation}, and
that it is completely specified by the linear ordering chosen for the
vertices of $\Delta_{m-1}$ and $\Delta_{n-1}$. Even more, one can easily prove
that the staircase triangulation is regular, because it is a pulling
triangulation (cf. \cite[Proposition~6.2.15]{DeLoeraRambauSantos}).
 
\end{example}

\subsection*{Matching ensemble representation}

Every triangulation of $\product$ gives rise to a collection of perfect
matchings on all subgraphs of $\bipartite$ induced by subsets $I\subset [m]$ and
$\overline J\subset [\overline n]$ of the same cardinality. Roughly, it collects
the information of what subset of every circuit of $\product$ appears as a simplex of
the triangulation. 

Recently, Suho Oh and Hwanchul Yoo~\cite{suho_triangulations_2013} have found a
concise characterization of those collections of perfect matchings which
correspond to triangulations of $\product$, hence discovering a novel
\defn{matching ensemble representation} for triangulations of $\product$.

\begin{definition}[{\cite[Definition~4.1]{suho_triangulations_2013}}]\label{def:ensemble}
A family of perfect matchings $\cM$ on all induced subgraphs of $\bipartite$ is
a \defn{matching ensemble} if:
\begin{enumerate}
 \item[\textbf{(SA).}\namedlabel{it:supports}{\textbf{(SA)}}] for each $I\subset
[m]$ and $\overline J\subset [\overline n]$ with $|I|=|\overline J|$, there
exists a unique $\Match \in \cM$ on the subgraph of $\bipartite$ induced by $I$
and $\overline J$ (\defn{supports axiom}),
 \item[\textbf{(CA).}\namedlabel{it:closed}{\textbf{(CA)}}] for each $\Match\in
\cM$ and for each (perfect sub-matching) $\Match'\subseteq \Match$, $\Match'\in
\cM$ (\defn{closure axiom}), and
 \item[\textbf{(LA).}\namedlabel{it:manifold}{\textbf{(LA)}}] for each
$\Match\in \cM$ and for each $v\in[n]\cup[\overline m]\setminus (I\cup \overline
J)$, there are two edges $e'\in \bipartite$ and $e\in \Match$ sharing a common
vertex such that $v \in e'$ and $(\Match\setminus e \cup e')\in \cM$
(\defn{linkage axiom}).
\end{enumerate}
 \end{definition}

\begin{theorem}[{\cite[Theorem~5.4]{suho_triangulations_2013}}]\label{thm:SuhoYoo}
A family of perfect matchings $\cM$ is the family of perfect matchings of a
triangulation $\cT$ of $\product$ if and only if it is a matching ensemble.
\end{theorem}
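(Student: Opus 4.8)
The theorem is an equivalence, so I would establish the two implications separately. The forward implication --- a triangulation gives rise to a matching ensemble --- is the routine one; the reverse implication --- every matching ensemble is realized by a triangulation --- is where almost all of the work lies, since it amounts to a reconstruction.

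\textbf{Forward direction.} Given a triangulation $\cT$ of $\product$, let $\cM_\cT$ consist of the perfect matchings of the induced subgraphs $\bipartitep{I}{J}$ with $|I|=|\overline J|$ that occur as faces of $\cT$; each such matching is a set of disjoint edges, hence cycle-free, so by Lemma~\ref{lem:bipartite} it spans a simplex of the expected dimension. The closure axiom is immediate, as a subset of a face of $\cT$ is a face and a perfect sub-matching of a matching of $\bipartitep{I}{J}$ is a perfect matching of the induced subgraph on its own endpoints. For the supports axiom, fix $I,\overline J$ with $|I|=|\overline J|$. Uniqueness is the key point: if two distinct perfect matchings $\Match_1,\Match_2$ of $\bipartitep{I}{J}$ were both faces of $\cT$, then $\Match_1\triangle\Match_2$ would contain a cycle $C$, which by Lemma~\ref{lem:bipartite} is the edge set of a circuit of $\product$ whose two alternating halves are $C\cap\Match_1$ and $C\cap\Match_2$; the convex hulls of complementary halves of a circuit meet in a common point of their relative interiors, so they cannot both be faces of a triangulation. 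Existence --- that \emph{some} perfect matching of $\bipartitep{I}{J}$ is a face of $\cT$ --- is obtained by restricting $\cT$ to the face $\productp{I}{J}$ and invoking the fact that a triangulation of a product of two simplices of equal sizes always contains a perfect-matching simplex as a face; this last fact is a small but non-immediate deformation argument, handled inside the face (e.g. by induction on $|I|$ via restriction to subfaces). Finally, the linkage axiom encodes the local structure of $\cT$ near the simplex $\Match$: passing to the face spanned by $I\cup\{v\}$ and $\overline J$ (or by $I$ and $\overline J\cup\{v\}$), the star of $\Match$ is a ball neighbourhood of its relative interior, and the facet of the restricted triangulation on the side facing the missing vertex $v$ exhibits the required exchange $\Match\setminus e\cup e'$.

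\textbf{Reverse direction.} Suppose $\cM$ satisfies the axioms of Definition~\ref{def:ensemble}. I would reconstruct a triangulation $\cT_\cM$ by prescribing its facets: declare a spanning tree $T$ of $\bipartite$ to be a facet whenever, for every non-edge $e$ of $T$, the alternating half of the unique cycle in $T+e$ that does not contain $e$ --- which then lies entirely in $T$ and is, by Lemma~\ref{lem:bipartite}, a perfect matching of an induced subgraph --- belongs to $\cM$; let $\cT_\cM$ be the complex generated by these facets. The plan is then to verify the three defining properties of a triangulation of $\product$. That the facets are full-dimensional simplices is clear, since spanning trees are affine bases (Lemma~\ref{lem:bipartite}). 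That two facets meet in a common face uses the supports axiom to make the induced-matching data on any common vertex subset unambiguous, and the closure axiom to keep the relevant sub-matchings inside $\cM$, so that the intersection is again a subcomplex. The delicate point is that the facets cover all of $\product$: one argues that $\cT_\cM$ is a pseudomanifold (each codimension-one face lies in at most two facets, again by the supports axiom) which, by the linkage axiom, has no boundary in the interior of $\product$ --- the linkage axiom says precisely that from any facet one can cross any interior codimension-one wall to a neighbouring facet --- so, being embedded in the ball $\product$ with boundary contained in $\partial\product$, a degree (or PL-topology) argument forces $\cT_\cM$ to fill $\product$. A last step verifies $\cM_{\cT_\cM}=\cM$: one inclusion is built into the construction, and the other follows from the supports axiom applied cell by cell.

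\textbf{Main obstacle.} The crux is the coverage claim in the reverse direction: showing the candidate complex is a genuine triangulation and not merely a partial one. This is exactly where the somewhat technical linkage axiom must be used in full strength, and the pseudomanifold-without-interior-boundary argument --- including purity and codimension-one connectedness of $\cT_\cM$ --- is the technical heart of the proof. A secondary, more bookkeeping difficulty is the existence half of the supports axiom in the forward direction, best dispatched by a deformation argument inside each product-of-simplices face.
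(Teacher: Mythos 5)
First, a point of order: the paper does not prove this statement. Theorem~\ref{thm:SuhoYoo} is imported verbatim from Oh and Yoo~\cite{suho_triangulations_2013}, where it is Theorem~5.4, and the present paper only uses it as a black box (together with Lemma~\ref{lem:recovertriangulation}). So there is no internal proof to compare yours against; what you have written is a blind reconstruction of the Oh--Yoo argument, and it has to be judged on its own.

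As such a reconstruction, your outline chooses the right decomposition and correctly locates the hard points, but it is a plan rather than a proof: each of the steps you yourself flag as delicate is asserted, not argued. Concretely: (i) the existence half of \ref{it:supports} in the forward direction is genuinely nontrivial. A spanning forest of $\bipartitep{I}{J}$ need not contain a perfect matching (take the tree on $K_{3,\overline 3}$ with edges $1\overline1,1\overline2,1\overline3,2\overline1,3\overline1$), so ``restrict to the face and find a perfect-matching simplex'' is exactly the statement to be proved; the clean argument is to take the carrier of the barycentric point $\bigl(\tfrac1k\sum_{i\in I}\bfe_i,\ \tfrac1k\sum_{\overline j\in \overline J}\bfe_j\bigr)$, observe that it supports a doubly stochastic matrix, and extract a permutation matrix via Birkhoff--von Neumann. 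Your ``deformation argument, by induction on $|I|$ via restriction to subfaces'' is not an argument as stated. (ii) Your facet criterion in the reverse direction only forbids \emph{fundamental} alternating cycles (one non-tree edge), whereas the criterion actually needed (the paper's Lemma~\ref{lem:recovertriangulation}) forbids alternating cycles between $T$ and \emph{every} $\Match\in\cM$, which may use several non-tree edges. The two are equivalent, but the equivalence is itself a lemma that uses the supports and closure axioms and must be proved. (iii) The covering step is, as you say, the technical heart, and ``pseudomanifold without interior boundary plus a degree argument'' presupposes purity, the exactly-two (not merely at-most-two) incidence of interior walls, strong connectedness of the facet graph, and a point covered exactly once; the linkage axiom gives you a neighbour across each wall, but welding these facts into the pseudomanifold criterion for triangulations is precisely the content of the Oh--Yoo proof. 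In short: the skeleton is right, and matches the known strategy, but the flesh is missing at exactly the three places where the theorem is hard.
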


The lemma below indicates how a triangulation can be recovered from a matching
ensemble:
\begin{lemma}\label{lem:recovertriangulation}
Let $\cM$ be a matching ensemble on $\bipartite$, and let~$\cT$ be its
corresponding triangulation of $\product$. Then a spanning tree $s\in
\bipartite$ represents a (maximal) simplex in $\cT$ if and only if for each
$\Match\in \cM$, there is no cycle in $s\cup \Match$ that alternates between $s$
and $\Match$.
\end{lemma}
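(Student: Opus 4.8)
The plan is to exploit the well-known correspondence between triangulations of $\product$ and their sets of circuits, together with the matching ensemble data. Recall that a spanning tree $s$ of $\bipartite$ corresponds to a full-dimensional simplex of $\product$, and that a triangulation $\cT$ is completely determined once we know, for every circuit $C$ of $\product$ (equivalently, every cycle in $\bipartite$), which of the two ``sides'' $C^+$, $C^-$ of the circuit is the one that is \emph{not} a face of $\cT$ — this is exactly the information encoded by the matching $\Match\in\cM$ supported on the vertex set of $C$, whose edges are one of the two alternating classes along the cycle (by Lemma~\ref{lem:bipartite}\eqref{it:bipartitecircuits}). A full-dimensional simplex $\sigma$ belongs to $\cT$ if and only if it is ``compatible'' with every circuit, i.e. for no circuit $C$ does $\sigma$ contain the forbidden side $C^\varepsilon$. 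I would first make this translation precise: $s$ represents a simplex of $\cT$ iff for every cycle $Z$ in $\bipartite$ with vertex set $I\cup\overline J$, the simplex $s$ does not contain the matching class of $Z$ that is opposite to $\Match_{I,\overline J}\in\cM$.

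Next I would show that ``$s$ contains the forbidden alternating class of some cycle $Z$'' is equivalent to ``there is a cycle in $s\cup\Match$ (for the appropriate $\Match\in\cM$) alternating between $s$ and $\Match$''. For one direction: if $Z=e_1\overline{e_1}e_2\overline{e_2}\cdots$ is a cycle with, say, the odd-indexed edges forming $\Match_{I,\overline J}$ and the even-indexed edges lying in $s$, then since $\Match_{I,\overline J}$ restricted to the vertex set of $Z$ is a perfect sub-matching it lies in $\cM$ by the closure axiom~\ref{it:closed}, and $Z$ itself is a cycle in $s\cup\Match_{I,\overline J}$ alternating between $s$ and $\Match_{I,\overline J}$ — so $s$ is not in $\cT$. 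Conversely, any alternating cycle in $s\cup\Match$ with $\Match\in\cM$ gives a circuit $C$ of $\product$ one of whose sides is contained in $s$ and is contained in $\Match$; restricting to $C$'s vertex set and using the supports axiom~\ref{it:supports} (uniqueness) identifies that side as the forbidden one, so again $s\notin\cT$. This requires some care to check that the side of $C$ sitting inside $\Match$ is genuinely the ``non-face'' side, which is where the definition of the matching ensemble associated to a triangulation is used.

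The cleanest way to package both directions is probably to invoke Theorem~\ref{thm:SuhoYoo} / the standard fact that the simplices of $\cT$ are precisely the sets of edges containing no ``alternating obstruction'', and then just verify the two inclusions above; alternatively one argues directly from the definition of the matchings of a triangulation (``$\Match_{I,\overline J}$ records which part of the circuit on $I\cup\overline J$ is used''). I expect the main obstacle to be the bookkeeping in the converse direction: given an alternating cycle in $s\cup\Match$, one must argue that it is an \emph{induced} cycle on its vertex set (or pass to a minimal one) so that it corresponds to an honest circuit of $\product$, and then that the matching-class of this circuit lying in $s$ is exactly the one forbidden by $\cM$ — i.e. that $s$ cannot simultaneously be a simplex of $\cT$ and contain a forbidden side. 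This is essentially a restatement of how circuits interact with a triangulation, so no genuinely new idea is needed, but it is the step where all the definitions must be lined up correctly.
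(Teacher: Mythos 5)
The paper states Lemma~\ref{lem:recovertriangulation} without proof, so your proposal can only be measured against the standard argument it is presumably alluding to. Your overall route --- translate membership of $s$ in $\cT$ into circuit compatibility, identify the ``face'' side of each circuit with the distinguished matching via \ref{it:supports} and \ref{it:closed}, and use the improper-intersection criterion for the converse --- is the natural one and is essentially sound.

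Two points need repair or completion. First, your opening ``translation'' is misstated: for $|I|=|\overline J|\geq 3$ the distinguished matching $\Match_{I,\overline J}$ is in general \emph{neither} of the two alternating classes of a given Hamiltonian cycle $Z$ on $\bipartitep{I}{J}$ (there are $|I|!$ perfect matchings on $\bipartitep{I}{J}$ but each cycle carries only two), so ``the class of $Z$ opposite to $\Match_{I,\overline J}$'' is not well defined. The statement you actually need, and which your later use of the axioms delivers, is: any class of $Z$ that is contained in a simplex of $\cT$ is a perfect sub-matching of that simplex, hence lies in $\cM$ and therefore equals $\Match_{I,\overline J}$ by \ref{it:supports}. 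This already gives the forward implication purely combinatorially: an alternating cycle in $s\cup\Match$ with $s\in\cT$ would exhibit two distinct perfect matchings on the same support both lying in $\cM$ (the class inside $s$, and the class inside $\Match$, the latter via \ref{it:closed}), contradicting \ref{it:supports}; no geometric circuit argument is needed there. Second, the obstacle you flag in the converse --- passing to induced cycles --- is not an obstacle at all: every cycle in $\bipartite$ is a circuit of $\product$ by Lemma~\ref{lem:bipartite}, since any proper subset of its edges is a forest and hence independent. The genuine content of the converse is instead the step you wave at as ``the standard fact'': if $s\notin\cT$, then $\conv(s)$ is full-dimensional and must overlap the interior of some maximal cell $t\in\cT$ improperly, whence the circuit criterion (\cite[Lemma~2.4.2]{DeLoeraRambauSantos}) yields a circuit $C$ with $C^+\subseteq s$ and $C^-\subseteq t$; then $C^-$ is a perfect matching contained in a simplex of $\cT$, so $C^-\in\cM$, and $C^+\cup C^-$ is the required alternating cycle in $s\cup C^-$. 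Spelling out that step is what turns your plan into a proof.
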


\begin{example}
The matching ensemble corresponding to the staircase triangulation of $\product$
from Example~\ref{example:staircase} consists of all non-crossing perfect
matchings on the induced subgraphs of $\bipartite$, where non-crossing means
with non-intersecting edges in the standard embedding of $\bipartite$ in the
plane (see Figure~\ref{fig:staircasematch}).
\begin{figure}[htpb]
\centering
\includegraphics[width=0.2\textwidth]{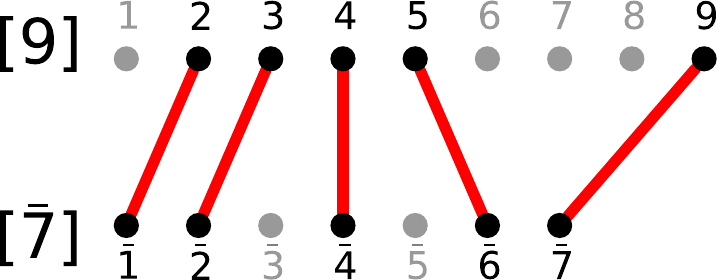}
\caption{A non-crossing perfect matching on $K_{\{2,3,4,5,9\},\{\overline 1,
\overline 2, \overline 4,\overline 6, \overline 7\}}$ that belongs to the
matching ensemble of the staircase triangulation of $\productp{8}{6}$.}
\label{fig:staircasematch}
\end{figure}

\end{example}

Since $\bipartite$ admits a perfect matching if and only if $m=n$, when speaking
of a perfect matching on $\bipartitep{I}{J}$, we will frequently take the
assumption $|I|=|\overline J|$ for granted. This understood, we will also omit
the adjective ``perfect'' whenever there is no risk of confusion.

\subsection*{Mixed subdivisions and tropical arrangements}
 
In order to illustrate some of our constructions, we shall draw triangulations
of $\product$ as \defn{fine mixed subdivisions} of $m\Delta_{n-1}$. Let $\cT$
be a triangulation of $\product$. To each simplex $s\in \cT$ associate the
simplex $s_i=\conv \set{\bfe_j}{(\bfe_i,\bfe_j)\in s}\subset\Delta_{n-1}$. The set of
\emph{Minkowski sums}~$\set{s_1+\dots+s_m}{s\in \cT}$ forms a mixed subdivision of
$m\Delta_{n-1}$. The Cayley trick states that this correspondence is  a
bijection between triangulations of $\product$ and fine mixed subdivisions of
$m\Delta_{n-1}$ (see~\cite{Santos2005} for more details). This provides the link to
an interpretation mentioned in the introduction: the dual of such a
mixed
subdivision can be seen as an arrangement of tropical pseudohyperplanes. In
fact, it turns out that there is a bijection between tropical oriented matroids
and mixed
subdivisions of $m\Delta_{n-1}$ \cite{ArdilaDevelin2009,Horn2012,OhYoo2011}.

\begin{figure}[htpb]
\centering
\includegraphics[width=0.9\textwidth]{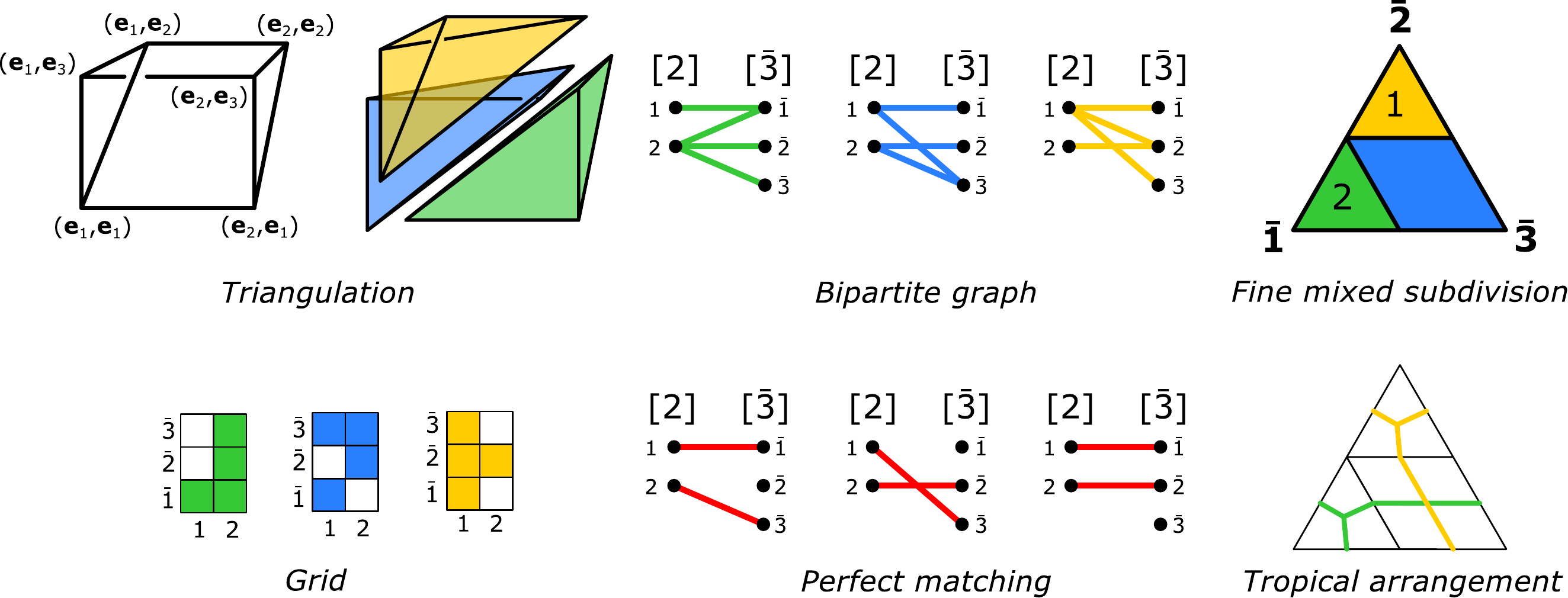} 
\caption{Several representations of a triangulation of $\Delta_1\times\Delta_2$.}
\label{fig:representations}
\end{figure}


\section{Extendable partial triangulations}\label{sec:extend}

In order to present the proof of our extendability Theorem~\ref{theorem:extend}, we first observe that, whenever $m\geq k\geq n$, if a triangulation of $\semiskel$ extends to a triangulation of $\product$, then this extension is unique. We omit the proof, which can be easily deduced from Theorem~\ref{thm:SuhoYoo} (it is also implicit in the proof of a classical result of Dey, cf. \cite[Section 3]{Dey1993} and \cite[Lemma 8.4.1]{DeLoeraRambauSantos}).

\begin{lemma}\label{lem:semiskel-unicity}
Let $m,n,k$ be natural numbers such that $n\geq 2$ and $m\geq k\geq n$. Every triangulation of $\product$ is uniquely determined by its restriction to $\semiskel$.
\end{lemma}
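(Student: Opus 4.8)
The plan is to use the matching ensemble representation (Theorem~\ref{thm:SuhoYoo}) together with the recovery Lemma~\ref{lem:recovertriangulation}. Suppose $\cT_1$ and $\cT_2$ are two triangulations of $\product$ whose restrictions to $\semiskel$ coincide. I want to show that their matching ensembles $\cM_1$ and $\cM_2$ are equal; by Lemma~\ref{lem:recovertriangulation} this forces $\cT_1=\cT_2$. The key observation is that the restriction of a triangulation of $\product$ to $\semiskel$ already determines all the matchings on induced subgraphs $\bipartitep{I}{J}$ with $|I|=|\overline J|\leq k$: indeed, the simplex $\conv\set{(\bfe_i,\bfe_j)}{i\in I}$ of $\productp{I}{J}\subseteq\Delta_{|I|-1}\times\Delta_{n-1}$ lives inside $\kskel{m-1}{k-1}\times\Delta_{n-1}=\semiskel$ as soon as $|I|\leq k$, so the unique matching on $\bipartitep{I}{J}$ guaranteed by axiom~\ref{it:supports} is read off from the common restriction. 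Since $k\geq n$, in particular all matchings on subgraphs with $|I|=|\overline J|=n$ are determined.

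So it suffices to prove that a matching ensemble $\cM$ on $\bipartite$ is determined by its matchings on the subgraphs $\bipartitep{I}{J}$ with $|I|=|\overline J|=n$ (equivalently, by its "bottom $n$ layers"). I would argue by downward induction on $|\overline J|$, or rather by induction showing that the matching on $\bipartitep{I}{J}$ with $|I|=|\overline J|=\ell$ is determined once all matchings with smaller support are known, for $\ell\geq n+1$. Fix such $I,J$ and let $\Match$ be the unique matching of $\cM$ on $\bipartitep{I}{J}$. Each edge $(i,\overline j)$ of $\Match$ lies in the sub-matching $\Match\setminus(i,\overline j)$, which by the closure axiom~\ref{it:closed} belongs to $\cM$ and is supported on $\bipartitep{I\setminus i}{J\setminus j}$, a subgraph with support size $\ell-1$; by induction that smaller matching is already known. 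Hence it suffices to show that $\Match$ is reconstructible from the collection of all its $(\ell-1)$-edge sub-matchings — i.e. that a perfect matching on a graph with $\ell\geq n+1\geq 3$ vertices per side is the unique perfect matching having that prescribed family of "one-vertex-pair-deleted" sub-matchings. This is elementary: if $e=(i,\overline j)\in\Match$, then $e$ is the unique edge missing from $\Match\setminus(i',\overline j')$ for any $(i',\overline j')\neq e$ that is incident to neither $i$ nor $\overline j$; such a choice exists precisely because $\ell\geq 3$ (there is at least one other edge of $\Match$ disjoint from $e$). Thus every edge of $\Match$ is recovered, so $\Match$ is determined, closing the induction.

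The main obstacle, and the place where the hypothesis $n\geq 2$ (and implicitly $\ell\geq 3$) is genuinely used, is the base-case bookkeeping of the induction: one must be careful that for $\ell=n+1$ all the needed $(\ell-1)$-edge sub-matchings really do have support of size exactly $n$, hence are covered by the given data, and that the reconstruction step above has enough room (at least two disjoint edges in $\Match$) to pin down each edge. For $n=1$ the statement would fail — a matching on $\bipartitep{I}{J}$ with $|I|=|\overline J|=2$ is a single edge, not determined by its (empty) sub-matchings — which is consistent with the hypothesis $n\geq 2$. I would remark that this argument is essentially the one alluded to in the statement as "implicit in the proof of a classical result of Dey," recast in the language of matching ensembles; once the reconstruction of $\cM$ from its size-$n$ part is in place, uniqueness of the extension is immediate from Theorem~\ref{thm:SuhoYoo} and Lemma~\ref{lem:recovertriangulation}.
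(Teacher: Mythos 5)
Your first paragraph, together with Theorem~\ref{thm:SuhoYoo} and Lemma~\ref{lem:recovertriangulation}, already constitutes a complete proof, and it is precisely the argument the paper has in mind when it says the lemma ``can be easily deduced from Theorem~\ref{thm:SuhoYoo}'': every perfect matching in the ensemble of a triangulation of $\product$ is supported on some $\bipartitep{I}{J}$ with $|I|=|\overline J|$, and since $\overline J\subseteq[\overline n]$ forces $|I|=|\overline J|\leq n\leq k$, \emph{every} face $\productp{I}{J}$ carrying a matching lies in $\semiskel$. Hence the entire matching ensemble---not merely its ``size $\leq k$ part''---is read off from the restriction, and the triangulation is recovered from its ensemble.

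The second half of your argument is consequently vacuous rather than load-bearing: there are no induced subgraphs $\bipartitep{I}{J}$ with $|I|=|\overline J|=\ell\geq n+1$, because one side of $\bipartite$ has only $n$ vertices, so your induction ranges over an empty set of cases and the reconstruction of a matching from its one-edge-deleted sub-matchings never has to be performed. (Had it been needed, you would also have had to explain how the matchings with support of size $<n$ are recovered from those of size exactly $n$, which your reduction silently assumes.) Relatedly, the closing claim that the statement ``would fail for $n=1$'' is not right for the reason given: for $n=1$ the polytope $\product$ is a simplex with a unique triangulation, so the lemma holds trivially; the hypothesis $n\geq 2$ only excludes degeneracies and plays no role in the matching-ensemble argument. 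None of this invalidates your proof---the first paragraph is correct and sufficient---but the induction should be deleted, or replaced by the one-line observation that $|\overline J|\leq n$ makes it moot.
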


Not every triangulation of~$\semiskel$ can be extended to a triangulation of~$\product$, as is already known from the familiar non-extendable triangulation of ${\Delta_{2}^{(1)}\times\Delta_{1}}$ depicted in Figure~\ref{fig:nonextendable_a}, often called ``the mother of all examples''. However, by strengthening the hypotheses to $k>n$, it becomes possible to certify extendability.

\begin{theorem}\label{theorem:extend}
Let $m,n,k$ be positive integers such that $m\geq k>n$. Every triangulation of $\semiskel$ extends to a unique triangulation of $\product$.
\end{theorem}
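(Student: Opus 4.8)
The plan is to prove the theorem by exhibiting, for any triangulation $\cT_0$ of $\semiskel$, an explicit matching ensemble $\cM$ on all of $\bipartite$ that restricts correctly, and then invoking Theorem~\ref{thm:SuhoYoo} to conclude that $\cM$ comes from a triangulation $\cT$ of $\product$; uniqueness is already handed to us by Lemma~\ref{lem:semiskel-unicity} (since $k\geq n$). The triangulation $\cT_0$ of $\semiskel$ already determines, via the matching ensemble correspondence applied cell-by-cell, a family of matchings $\Match_{I,\overline J}$ on every induced subgraph $\bipartitep{I}{J}$ with $|I|=|\overline J|$ \emph{as long as} $|I|\leq k$: indeed, every face $\productp{I}{J}$ with $|I| \le k$ is a union of faces of $\semiskel$, so $\cT_0$ triangulates it, and reading off which subset of each circuit survives gives the matchings. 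The content of the theorem is that this partial family extends \emph{uniquely and consistently} to subsets $I$ with $|I| > k$, i.e.\ we must \emph{define} $\Match_{I,\overline J}$ for $|I|=|\overline J| = \ell > k$ and check the three matching-ensemble axioms globally.

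The definition of the missing matchings should be forced: given $I$ with $|I| = \ell > k$, pick any $k$-subset $I' \subseteq I$; the matching $\Match_{I,\overline J}$ restricted (via the closure axiom) to the subgraph on $I'$ and any $k$-subset of $\overline J$ must equal $\Match_{I',\overline J'}$, which is already known. The crucial combinatorial step is a \emph{local-to-global reconstruction lemma} for perfect matchings: a perfect matching on $\bipartitep{I}{J}$ is uniquely determined by its restrictions to all induced subgraphs on $k$-subsets, provided $k \ge 2$ — and more is true, namely that \emph{any} coherent family of matchings on the $k$-subsets (coherent meaning they agree on common $(k{-}1)$-or-smaller overlaps) glues to a genuine matching on the big subgraph. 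This is where the hypothesis $k > n$ enters: since $\ell \le n$ for any subgraph that admits a perfect matching inside a face of $\product$... wait — one must be careful here, because $|I| = |\overline J|$ forces $|\overline J| \le n$, hence $\ell \le n < k$, so in fact \emph{every} subgraph of $\bipartite$ that carries a perfect matching already lives on a vertex set with at most $n < k$ elements on each side, and therefore $\Match_{I,\overline J}$ is \emph{already determined by $\cT_0$}. Thus the only real work is verifying the three axioms \ref{it:supports}, \ref{it:closed}, \ref{it:manifold} for the family $\cM$ assembled from $\cT_0$, where the subtlety is that $\cT_0$ a priori only "knows about" faces $\productp{I}{J}$ with $\min(|I|,|\overline J|)\le k$, but for matchings we always have $\min(|I|,|\overline J|) = |I| = |\overline J| \le n < k$, so all the relevant faces are genuinely triangulated by $\cT_0$.

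Concretely the steps are: (1) From $\cT_0$, extract for each $I, \overline J$ with $|I| = |\overline J| \le n$ the matching $\Match_{I,\overline J}$ recording the simplices of $\cT_0$ inside the circuit on $\bipartitep{I}{J}$ (using that $\productp{I}{J} \subseteq \semiskel$ since $n < k$, hence is triangulated); call the resulting family $\cM$. (2) Verify \ref{it:supports} and \ref{it:closed}: these are inherited directly from the fact that $\cT_0$ restricted to each such face is a triangulation and from the compatibility of $\cT_0$ on overlapping faces. (3) Verify the linkage axiom \ref{it:manifold}: given $\Match \in \cM$ on $\bipartitep{I}{J}$ with $|I| = |\overline J| = \ell$ and a vertex $v \notin I \cup \overline J$, I need an allowed "flip." Here I split on whether $\ell < n$ (so there is room to enlarge to a subgraph on $\le n$ vertices per side, still inside $\semiskel$, and use that $\cT_0$ triangulates that bigger face, which already satisfies linkage internally) or $\ell = n$ (where one side is saturated but the other can still grow, again staying within a face triangulated by $\cT_0$, provided $k>n$ guarantees that $(n{+}1)$-subsets on the $[m]$ side still give faces inside $\semiskel$). (4) Apply Theorem~\ref{thm:SuhoYoo} to get a triangulation $\cT$ of $\product$ with matching ensemble $\cM$; by construction its restriction to $\semiskel$ agrees with $\cT_0$ on every face, and by Lemma~\ref{lem:semiskel-unicity} this extension is unique. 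The main obstacle I anticipate is step (3), the linkage axiom: one must carefully track that the "witnessing" larger subgraph one uses to produce the flip still corresponds to a face of $\product$ contained in $\semiskel$ — and it is precisely the strict inequality $k > n$ that ensures enough room on the $[m]$-side (where $I$ can have up to $n+1 \le k$ elements) while the $[\overline n]$-side is automatically fine; losing that slack is exactly what breaks the argument when $k = n$, consistent with Theorem~\ref{theorem:non-extend}.
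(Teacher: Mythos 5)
Your proposal is correct and follows essentially the same route as the paper: extract the family of perfect matchings from the partial triangulation (noting that every perfect matching lives on a subgraph with at most $n<k$ vertices per side, so all relevant faces lie in $\semiskel$), verify the three matching-ensemble axioms with the hypothesis $k>n$ entering exactly at the linkage axiom to guarantee that the enlarged face $\productp{I\cup v}{J}$ is still triangulated, and conclude via Theorem~\ref{thm:SuhoYoo} and Lemma~\ref{lem:semiskel-unicity}. No substantive difference from the paper's argument.
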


\begin{proof}
 Let $\cT'$ be a triangulation of~$\semiskel$ in the conditions of the theorem, and let $\cM$ be the set of all perfect matchings contained in the simplices of $\cT'$, when viewed as subgraphs of $\bipartite$. We prove that $\cM$ is necessarily a matching ensemble (cf. Definition~\ref{def:ensemble})  and hence, by Theorem~\ref{thm:SuhoYoo}, that it is the family of perfect matchings of a triangulation of $\product$. 
  \begin{enumerate}
 \item[\ref{it:supports}.] For each $I\subset [n],\overline J\subset [\overline m]$ with $|I|=|\overline J|\leq n$, the restriction $\cT' |_{\productp{I}{J}}$ is a triangulation of the face $\productp{I}{J}$ of $\product$. Let $\cM |_{\bipartitep{I}{J}}$ be the set of perfect matchings associated to this restricted triangulation.
Since this face is a product of simplices, Theorem~\ref{thm:SuhoYoo} applies, so that there is a unique perfect matching $\Match\in \cM |_{\bipartitep{I}{J}}$ on the induced subgraph $\bipartitep{I}{J}$. 
 \item[\ref{it:closed}.] Again, for each perfect matching $\Match\in\cM$ on $\bipartitep{I}{J}$, $\cT' |_{\productp{I}{J}}$ is a legal triangulation, as are all of its restrictions. In particular, by Theorem~\ref{thm:SuhoYoo}, it follows that $\Match'\in \cM$ whenever $\Match'\subseteq \Match$.
 \item[\ref{it:manifold}.] Fix a perfect matching $\Match\in \cM$ on $\bipartitep{I}{J}$, where $|I|=|\overline J|\leq n$, and let $v\in([m]\cup[\overline n])\setminus([I]\cup[\overline J])$. Observe that $\cT' |_{\productp{I\cup v}{J}}$ (resp. $\cT' |_{\productp{I}{J\cup v}}$) is a legal triangulation, because $k\geq n+1$. In particular, that means that there are two edges $e'\in \bipartitep{I\cup v}{J}$ (resp. $e'\in \bipartitep{I}{J\cup v}$) and $e\in \Match$ sharing a common vertex such that $v \in e'$ and $(\Match\setminus e \cup e')\in \cM$.
 \end{enumerate}

 Finally, the uniqueness of the resulting triangulation was established in Lemma~\ref{lem:semiskel-unicity}.
\end{proof}


\section{The Dyck path triangulation  and some relatives}\label{sec:DyckTriangulation}

There are two main ingredients towards our construction for Theorem~\ref{theorem:non-extend}: the \defn{Dyck path triangulation} and the \defn{extended Dyck path triangulation}. We present them here and explain how they can be used to prove Theorem~\ref{theorem:non-extend}. 

\subsection{The Dyck path triangulation}
The first ingredient is a triangulation of $\productn$ that we dub the \defn{Dyck path triangulation} and denote by $\dyckptri{n}$. 
This triangulation can be described in terms of \defn{Dyck paths} in the grid
representation $\gr{n}{n}$, that is, monotonically increasing paths from the
square $(1,\overline 1)$ to the square $(n,\overline n)$ of $\gr{n}{n}$, in which every square
$(i,\overline j)$ satisfies~$i\leq \overline j$. The maximal simplices of $\dyckptri{n}$ are the
Dyck paths in $\gr{n}{n}$, together with the orbit of simplices they generate
under an action that cyclically shifts the indices in both factors of
$\productn$ simultaneously. Examples for $n=3$ and $n=4$ are depicted in
Figures~\ref{fig:dyck3} and~\ref{fig:dyck4}.
\begin{figure}[htbp]
 \centering
 \begin{subfigure}{0.45\textwidth} \centering
 \includegraphics[width=0.9\textwidth]{dyck3.pdf}  
 \caption{The triangulation $\dyckptri{3}$ of $\productp{2}{2}$.}
 \label{fig:dyck3} 
 \end{subfigure} 
 \begin{subfigure}{0.45\textwidth} \centering
 \includegraphics[width=0.9\textwidth]{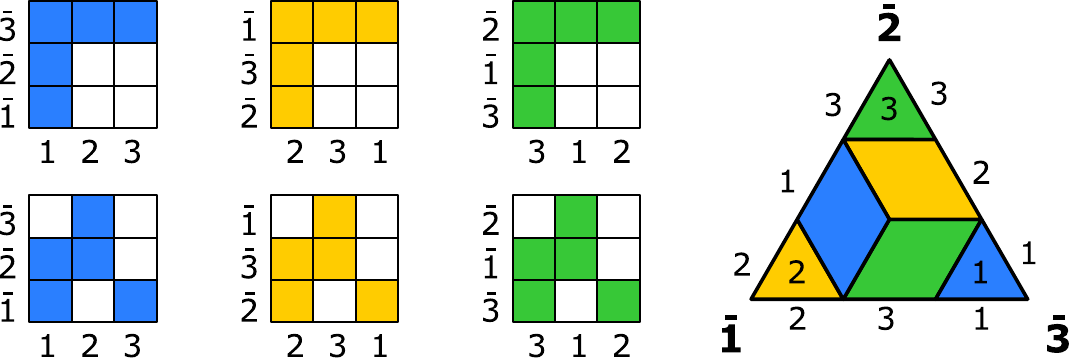}  
 \caption{The triangulation $\dyckptriflip{3}$ of $\productp{2}{2}$.}
 \label{fig:dyck3flip} 
 \end{subfigure} 
 \caption{The Dyck path triangulation of $\productp{2}{2}$ and its flipped
version in the grid and mixed subdivisions representations.}
 \end{figure}
 
 \begin{figure}[htbp]
 \centering
 \includegraphics[width=\textwidth]{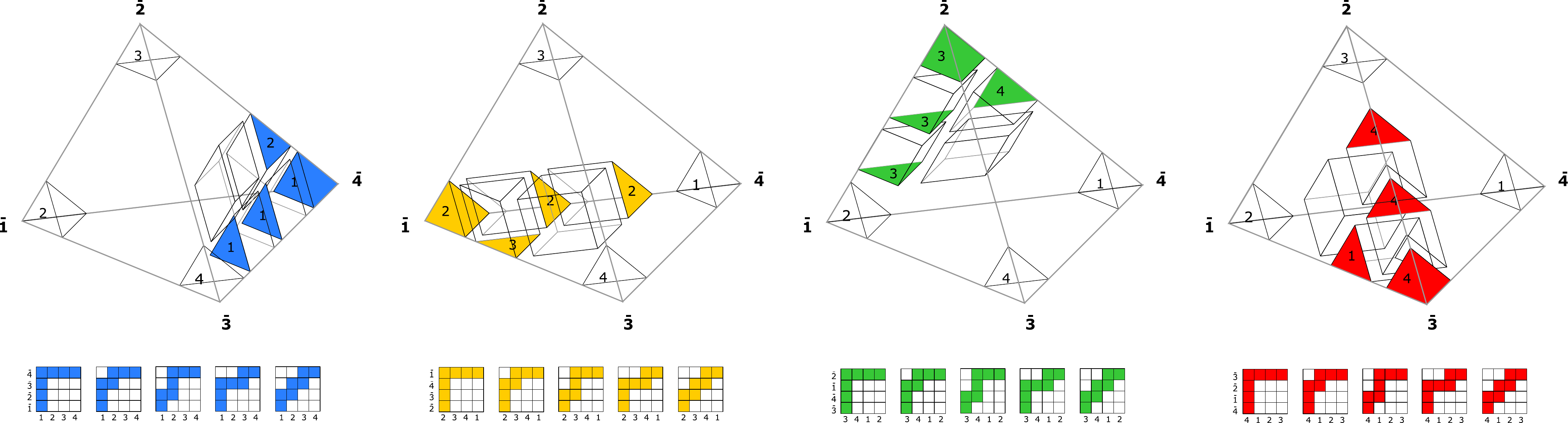}  
 \caption{The triangulation $\dyckptri{4}$ of $\productp{3}{3}$.}
 \label{fig:dyck4} 
 \end{figure}

\begin{theorem}\label{thm:dyck_triangulation}
The Dyck path triangulation is a triangulation of $\productn$. Moreover, it is regular.
\end{theorem}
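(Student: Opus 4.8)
The plan is to verify that the collection of perfect matchings induced by the Dyck path simplices and their cyclic orbit satisfies the three axioms of Definition~\ref{def:ensemble}, and then invoke Theorem~\ref{thm:SuhoYoo} to conclude that $\dyckptri{n}$ is a triangulation; regularity would then be handled separately by exhibiting an explicit height function. First I would translate the geometric description into the matching-ensemble language: each maximal simplex is a spanning tree in $K_{n,\overline n}$, and by Lemma~\ref{lem:bipartite} the relevant circuits correspond to cycles in the bipartite graph. For a subset $I\subseteq[n]$, $\overline J\subseteq[\overline n]$ with $|I|=|\overline J|$, I would characterize which perfect matching on $\bipartitep{I}{J}$ is the one selected by the Dyck path triangulation — I expect this to be a ``Dyck-like'' or ``cyclically shifted non-crossing'' matching, analogous to how the staircase triangulation selects non-crossing matchings (cf.\ Figure~\ref{fig:staircasematch}). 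Establishing this clean combinatorial description of $\cM|_{\bipartitep{I}{J}}$ is the conceptual heart of the argument.

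With that description in hand, the supports axiom \ref{it:supports} amounts to showing existence and uniqueness of such a matching for every $I,\overline J$ of equal size, which should follow from a direct combinatorial count or an explicit algorithm reading off the matching from the positions of $I$ and $\overline J$ on the cycle $\mathbb{Z}/n$. The closure axiom \ref{it:closed} requires that every perfect sub-matching of a selected matching is again selected on the smaller subgraph; this should be immediate once the selected matchings are described by a local/hereditary condition (such as being non-crossing after an appropriate cyclic rotation depending only on $I$ and $\overline J$). The linkage axiom \ref{it:manifold} is the one I expect to be the main obstacle: given a selected matching $\Match$ on $\bipartitep{I}{J}$ and a new vertex $v$, one must produce an edge $e\in\Match$ and an edge $e'\ni v$ sharing a vertex with $e$ so that $\Match\setminus e\cup e'$ is again selected on the enlarged subgraph. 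Here I would argue by tracking how inserting a new row or column into the grid modifies the Dyck/non-crossing structure, using the cyclic symmetry to reduce to a canonical position for $v$, and then exhibiting the required local exchange explicitly.

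For regularity I would exhibit a weight function $w\colon [n]\times[\overline n]\to\RR$ whose induced regular subdivision is exactly $\dyckptri{n}$. A natural candidate, given the cyclic symmetry, is a function of the ``cyclic distance'' between $i$ and $\overline j$, e.g.\ something like $w(i,\overline j)=f\big((j-i)\bmod n\big)$ for a suitably convex $f$; one then checks that for each maximal simplex (Dyck path and its cyclic shifts) there is a linear functional supporting precisely that face of the lifted polytope, and that no other simplex is lifted to a lower face. The verification reduces to a finite check on circuits: for each circuit (cycle in $K_{n,\overline n}$), the sign pattern of $\sum \pm w(i,\overline j)$ along the alternating edges must be consistent with the simplices selected by $\cM$. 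I would expect the convexity of $f$ to make these sign conditions transparent, so that regularity follows once the right $f$ is pinned down; the only delicate point is ensuring the lifting is \emph{generic} enough that the regular subdivision is a triangulation and not something coarser, which again comes down to strict convexity of $f$ at the relevant integer points.
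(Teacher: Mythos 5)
Your plan follows the same route as the paper: the paper defines the family $\cM_n$ of non-crossing, weakly increasing matchings together with their cyclic shifts, verifies the three axioms of Definition~\ref{def:ensemble} exactly as you outline (Proposition~\ref{pro:dyckmatchensemble}), and proves regularity with the height $h_{ij}=c^{\,j-i\pmod n}$, i.e.\ your ``convex function of the cyclic distance'' with $f(x)=c^x$ (Propositions~\ref{prop:pushingdyck} and~\ref{prop:extendeddyckisregular}). So the conceptual content matches.

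There is, however, one logical step you elide. Knowing that $\cM_n$ is a matching ensemble only tells you, via Theorem~\ref{thm:SuhoYoo}, that $\cM_n$ is the matching family of \emph{some} triangulation $\cT$ of $\productn$; it does not by itself yield that $\cT=\dyckptri{n}$, which is what the theorem asserts. The paper closes this in Proposition~\ref{pro:dyckmatch}: using Lemma~\ref{lem:recovertriangulation}, each Dyck-path simplex $s$ is shown to lie in $\cT$ because every perfect matching contained in $s$ already belongs to $\cM_n$ (so no alternating cycle with a member of $\cM_n$ can occur, by the supports axiom), and then the count $n\catnum{n-1}=\binom{2n-2}{n-1}$ shows these simplices exhaust the normalized volume of $\productn$, so $\cT$ has no further maximal simplices. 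You should add this identification step. A smaller caveat on regularity: the paper does not rely on convexity of $f$ alone but takes $c>1$ large enough that the maximal height term dominates (a pushing-triangulation argument via Lemma~\ref{lem:regularmatching}); it is not checked, and not obvious, that an arbitrary strictly convex $f$ of the cyclic distance would induce the same triangulation, so you should either prove that or specialize to the dominating exponential heights as the paper does.
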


We defer the proof of Theorem~\ref{thm:dyck_triangulation} to Propositions~\ref{pro:dyckmatch} and~\ref{prop:pushingdyck}.

\begin{remark}
The Dyck path triangulation of $\productn$ is a natural refinement of a coarse regular subdivision introduced by Gelfand, Kapranov and Zelevinsky in~\cite[Example 3.14]{GKZ}. Indeed, the union of all Dyck paths is a cell of this subdivision, and the remaining cells are obtained by applying the cyclic action that shifts the indices.
\end{remark}

For us, the crucial property of $\dyckptri{n}$ that underlies the construction for Theorem~\ref{theorem:non-extend}, is that it admits a \defn{geometric bistellar flip} supported on the circuit $\bfC=(\bfC^+,\bfC^-)$ of maximal dimension given by
\begin{align}\label{eq:circuitflip}
\bfC^+&:=\{(\bfe_1,\bfe_2),(\bfe_2,\bfe_3),\ldots, (\bfe_{n-1},\bfe_n),(\bfe_n,\bfe_1)\} \\ 
\bfC^-&:=\{(\bfe_1,\bfe_1),(\bfe_2,\bfe_2),\ldots, (\bfe_{n-1},\bfe_{n-1}),(\bfe_n,\bfe_n)\}\nonumber.
\end{align}
Therefore, performing this flip consists in replacing only the simplices in $\cT^+:=\{\bfC\setminus\{\bfv\}:\bfv\in\bfC^+\}\subset\dyckptri{n}$ by those in $\cT^-:=\{\bfC\setminus\{\bfv\}:\bfv\in\bfC^-\}$, while leaving the rest of $\dyckptri{n}$ intact; in particular, the flip does not alter the restriction of $\dyckptri{n}$ to the boundary of $\productn$. We refer the reader to~\cite[Section 4.4.1]{DeLoeraRambauSantos} for the precise definition of a geometric bistellar flip. We call the resulting triangulation the \defn{flipped Dyck path triangulation}, and denote it by $\dyckptriflip{n}$; it is illustrated for $n=3$ in Figure~\ref{fig:dyck3flip}. 

\subsection{The extended Dyck path triangulation}

The second ingredient is a natural extension of $\dyckptri{n}$ to a triangulation of $\productp{n}{n-1}$, which we call the \defn{extended Dyck path triangulation} and denote by $\dyckptriext{n}$. In the grid representation, an \defn{extended Dyck path} is formed by several Dyck paths, concatenated one after the other in the grid $\gr{(n+1)}{n}$, and a square in the $(n+1)$-th column and last row of each Dyck path; this is illustrated in Figure~\ref{fig:intersections}. The maximal simplices of $\dyckptriext{n}$ are given by the extended Dyck paths in $\gr{(n+1)}{n}$, together with the orbit of simplices they define under an action that cyclically shifts the indices in both factors of $\productp{[n]}{n-1}\subset\productp{n}{n-1}$ simultaneously (note that here the action ignores the $(n+1)$-th vertex of the first factor).
The simplices of the extended Dyck path triangulation for $n=3$ are shown in Figure~\ref{fig:dyckext3}. Interestingly, the simplices obtained this way constitute a regular triangulation of $\productp{n}{n-1}$.

 \begin{figure}
\centering
\begin{subfigure}{0.3\textwidth}
  \centering  
  \includegraphics[width=0.9\textwidth]{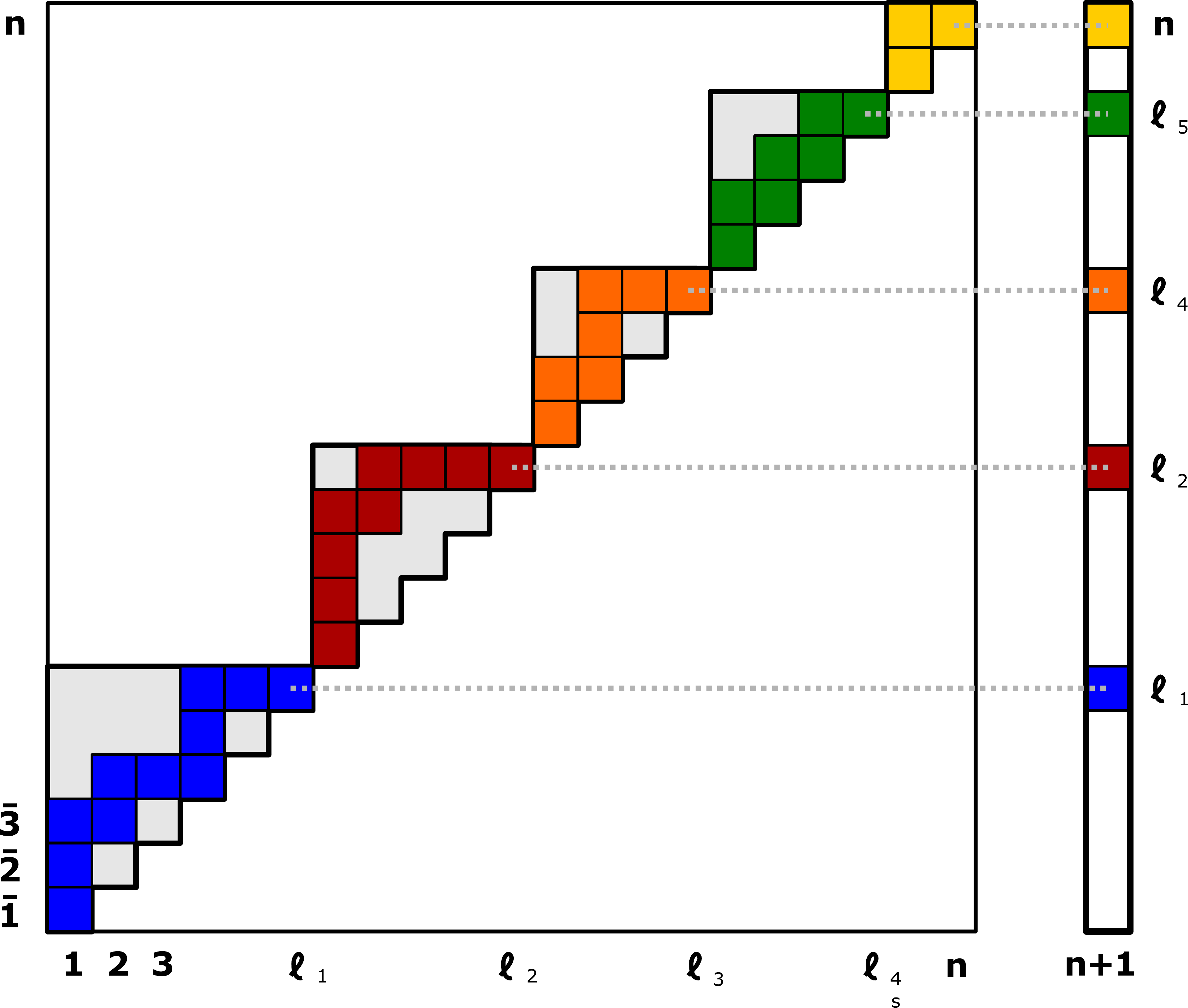}
  \caption{An extended Dyck path, representing a full-dimensional simplex in $\dyckptriext{n}$.}
  \label{fig:intersections}
\end{subfigure}\quad
\begin{subfigure}{0.6\textwidth}
\centering
 \includegraphics[width=0.9\textwidth]{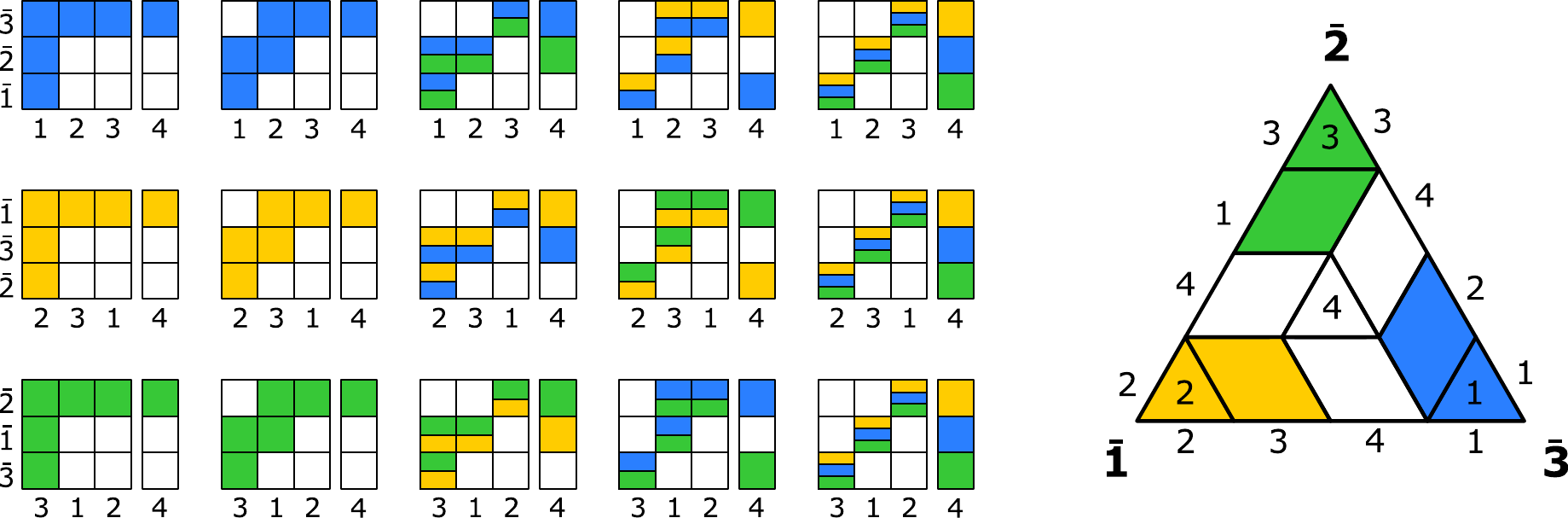}  
\caption{The triangulation $\dyckptriext{3}$ of $\productp{3}{2}$ in the grid and mixed subdivision representations}
\label{fig:dyckext3}
\end{subfigure}
\caption{Illustration of the extended Dyck path triangulation $\dyckptriext{n}$.}
\end{figure}

\begin{theorem}\label{thm:dyck_triangulation_ext}
The extended Dyck path triangulation $\dyckptriext{n}$ is a triangulation of
$\productp{n}{n-1}$. Moreover, it is regular.
\end{theorem}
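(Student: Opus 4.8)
The plan is to prove that $\dyckptriext{n}$ is a triangulation by exhibiting its matching ensemble and verifying the three axioms of Definition~\ref{def:ensemble}, and then to prove regularity by producing an explicit height function that induces it as a pushing (or pulling) triangulation, just as will be done for $\dyckptri{n}$ in Propositions~\ref{pro:dyckmatch} and~\ref{prop:pushingdyck}. First I would describe, for every pair $I\subseteq[n+1]$, $\overline J\subseteq[\overline n]$ with $|I|=|\overline J|$, the unique perfect matching $\Match_{I,\overline J}$ on $\bipartitep{I}{J}$ that the extended Dyck paths force: read off the edges $(i,\overline j)$ lying on some cyclic shift of an extended Dyck path whose endpoints in $\bipartitep{I}{J}$ match up. Concretely, one expects $\Match_{I,\overline J}$ to be obtained from the non-crossing (staircase-type) matching by the same cyclic-shift rule that defines the simplices, with the extra column $n+1$ handled by the ``append a square in the last row'' clause. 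The key structural observation to isolate and prove first is that any two extended Dyck paths intersect in a subcomplex which is itself (a shift of) an extended Dyck path on the appropriate subgrid — this is exactly what Figure~\ref{fig:intersections} is hinting at — since this intersection property is what makes the collection of simplices glue into a genuine simplicial complex and makes the induced matchings well-defined and closed under submatchings.

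With the candidate ensemble $\cM^{\mathrm{ext}}$ in hand, I would verify the axioms. The supports axiom \ref{it:supports} reduces to checking that for each $(I,\overline J)$ there is exactly one matching in $\cM^{\mathrm{ext}}$ on $\bipartitep{I}{J}$; here the case $n+1\notin I$ should follow immediately from the fact that $\dyckptri{n}$ is already known to be a triangulation (its restriction to $\productp{[n]}{n-1}$ is $\dyckptri{n}$ under the cyclic action), and the case $n+1\in I$ requires analysing which extended Dyck path, up to cyclic shift, can use the vertex in column $n+1$ — the ``last row'' constraint should pin this down uniquely. The closure axiom \ref{it:closed} follows from the intersection property above: a submatching of $\Match_{I,\overline J}$ restricts the supporting extended Dyck path to a sub-extended-Dyck-path. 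The linkage axiom \ref{it:manifold} is the delicate one: given $\Match$ on $\bipartitep{I}{J}$ and a new index $v$ (a row $\overline v$ or a column $v\in[n+1]$), I must produce $e\in\Match$, $e'\ni v$ sharing a vertex with $e'\setminus e\cup\Match\in\cM^{\mathrm{ext}}$; combinatorially this corresponds to ``inserting one lattice step'' into the extended Dyck path and identifying the unique local move (a corner flip of the path) that keeps it an extended Dyck path up to shift. I expect this to be the main obstacle, because column $n+1$ breaks the symmetry and one must treat separately the cases where $v$ is an ordinary column, the special column $n+1$, or a row, and in each case argue the move exists and lands back in the ensemble.

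For regularity, I would write down a height function $\omega\colon \verts(\productp{n}{n-1})\to\RR$ and show $\dyckptriext{n}$ is the corresponding regular triangulation, presumably by realizing it as a pushing triangulation with respect to a suitable vertex order, mirroring the argument for $\dyckptri{n}$ in Proposition~\ref{prop:pushingdyck}. The natural attempt is to take the height coming from the Dyck-path triangulation on the first $n$ columns and extend it to column $n+1$ by a large (or suitably tuned) value $\lambda$; one then checks that for $\lambda$ in an appropriate range the lower envelope of the lifted point configuration projects exactly to the extended Dyck paths and their cyclic shifts. The verification amounts to checking, for each candidate maximal simplex (an extended Dyck path or a shift thereof), that the affine functional determined by its lifted vertices lies weakly below all remaining lifted vertices, with equality only on the shared faces; this is a finite but somewhat involved sign computation on the circuits \eqref{eq:circuitflip}-type relations, and I would organize it by the same intersection lemma so that only ``adjacent'' simplices need to be compared. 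The one genuinely new complication relative to $\dyckptri{n}$ is ensuring the extra column does not create spurious lower faces, which is precisely where the freedom in choosing $\lambda$ is used.
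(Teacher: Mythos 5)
Your overall route is the same as the paper's: describe the matching ensemble $\Mext_n$ explicitly (non-crossing, weakly increasing matchings with a special clause for column $n+1$, closed under the cyclic shift), verify axioms \ref{it:supports}, \ref{it:closed} and \ref{it:manifold} by a rotation argument plus a case analysis on where the new index $v$ sits, and prove regularity by an explicit height function. Two points in your plan would need repair, though. First, the ``key structural observation'' that any two extended Dyck paths intersect in (a shift of) an extended Dyck path is neither true in general nor the right tool: in the bipartite picture the intersection of two maximal simplices is a forest, not a path, and in any case proper gluing of simplices is not certified by vertex-set intersections --- it is exactly what Theorem~\ref{thm:SuhoYoo} delivers for free once the three axioms are checked, so this lemma should be dropped rather than proved. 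Second, after building the ensemble you must still identify its triangulation with $\dyckptriext{n}$; the paper does this by checking that every matching contained in an extended Dyck path simplex lies in $\Mext_n$ (so each such simplex survives the test of Lemma~\ref{lem:recovertriangulation}) and then counting: the $\binom{2n-1}{n-1}$ extended Dyck path simplices already exhaust the normalized volume of $\productp{n}{n-1}$, so no further maximal simplices can occur. Your regularity plan (extend the Dyck heights by a tunable value on column $n+1$ and check lower faces directly) should work, but the paper's route is cleaner: it sets the height on column $n+1$ identically equal to $1$ and invokes the criterion that a height function induces the triangulation of a given ensemble if and only if each matching in the ensemble strictly minimizes the total height among all perfect matchings on its support (Lemma~\ref{lem:regularmatching}), which reduces the lower-envelope computation to a short induction showing that the minimizing matching is non-crossing and weakly increasing up to shift.
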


The proof of Theorem~\ref{thm:dyck_triangulation_ext} is addressed in
Propositions~\ref{pro:extdyckmatch} and~\ref{prop:extendeddyckisregular}.

\begin{remark}
Substituting the word ``square'' by ``edge'' in the definitions of
$\dyckptri{n}$ and $\dyckptriext{n}$, we automatically get their descriptions in
the bipartite graph representation. Thus, the maximal simplices of
$\dyckptri{n}$ are represented in $\bipartitep{n}{n}$ by non-crossing and weakly
increasing spanning trees, plus the orbit of spanning trees they generate under
the action that cyclically shifts the indices of $[n]$ and $[\overline n]$ simultaneously, as shown in Figure~\ref{fig:ncwi_trees1}.

Likewise, the maximal simplices of $\dyckptriext{n}$ consist of concatenated non-crossing and weakly increasing spanning trees on subgraphs of $\bipartitep{[n]}{n}$ with an edge between $n+1$ and the last vertex of $[\overline n]$ in every subgraph, along with their images under the cyclic
shift of the indices $[n]\subset[n+1]$ and $[\overline n]$. This is best understood looking at Figure~\ref{fig:ncwi_trees2}.
\begin{figure}[htbp]
 \centering
 \begin{subfigure}{0.4\textwidth} \centering
 \includegraphics[width=0.8\textwidth]{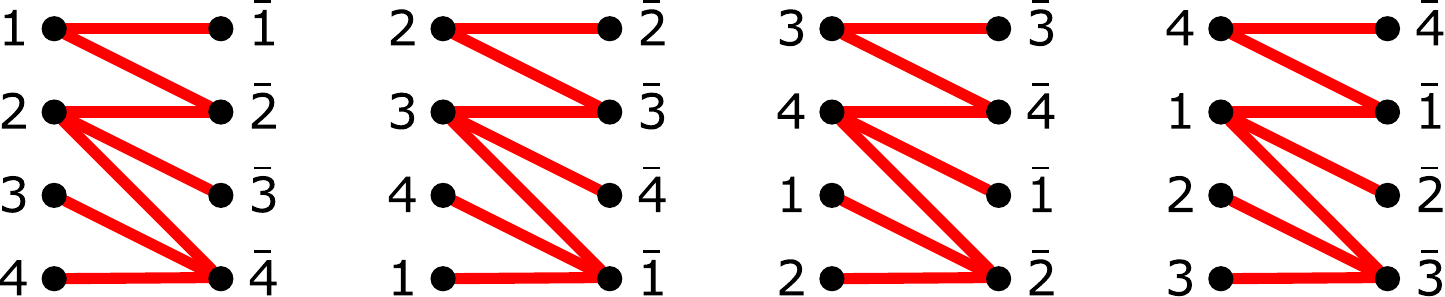}  
 \caption{A non-crossing and weakly increasing spanning tree in
$\bipartitep{4}{4}$, and its orbit under cyclic shifting.}
\label{fig:ncwi_trees1}
 \end{subfigure} \quad
 \begin{subfigure}{0.4\textwidth} \centering
 \includegraphics[width=\textwidth]{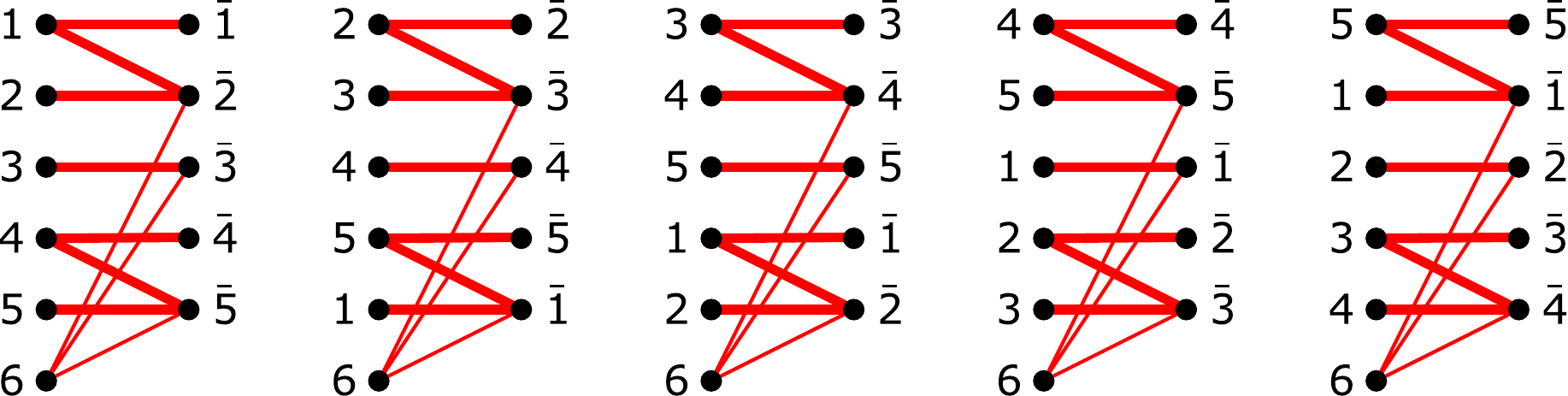}  
 \caption{Bipartite graph representation of an orbit of maximal simplices in
$\dyckptriext{4}$.}
\label{fig:ncwi_trees2}
 \end{subfigure} 
 \caption{Illustration of the bipartite graph representation of $\dyckptri{n}$
and $\dyckptriext{n}$.}
 \label{fig:ncwi_trees}
 \end{figure}
\end{remark}

The restriction of $\dyckptriext{n}$ to $\semibdy{n}{n-1}$ gives a partial
triangulation of $\productp{n}{n-1}$ whose restriction to the facet
$\productp{[n]}{n-1}$ coincides with $\dyckptri{n}$. Denote by
$\bdydyckptriext{n}$ this restricted triangulation, whose facet
$\productp{[n]}{n-1}$ admits a bistellar flip supported on the
circuit~\eqref{eq:circuitflip}. By this, we mean that the triangulation of the
facet $\productp{[n]}{n-1}$ can be changed from $\dyckptri{n}$ to
$\dyckptriflip{n}$, without affecting the triangulation of the remaining facets
of $\semibdy{n}{n-1}$. Hence, the result of the flip is still a triangulation
$\semibdy{n}{n-1}$, which we call \defn{flipped extended Dyck path
triangulation} and denote $\bdydyckptriflip{n}$. An example is depicted in
Figure~\ref{fig:non-extendable}. 

 \begin{figure}[htbp]
 \centering
 \includegraphics[width=0.8\textwidth]{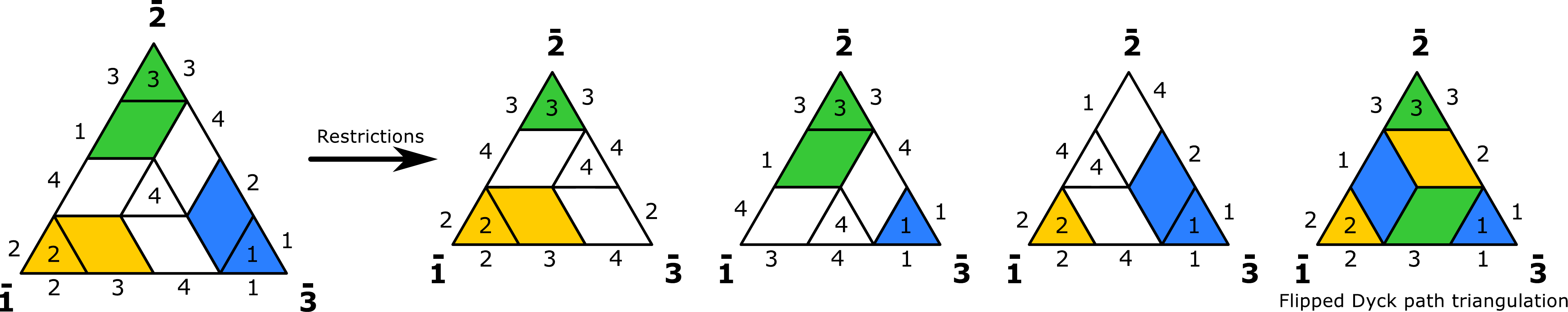}  
 \caption{A non-extendable $\protect \bdydyckptriflip{n}$ triangulation of $\semibdy{3}{3-1}$.}
 \label{fig:non-extendable} 
 \end{figure} 

\begin{theorem}\label{theorem:non-extend}
The flipped extended Dyck path triangulation $\bdydyckptriflip{n}$ of $\semibdy{n}{n-1}$ is non-extendable.
\end{theorem}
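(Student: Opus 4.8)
The plan is to argue by contradiction through the matching ensemble characterization (Theorem~\ref{thm:SuhoYoo}). Suppose $\bdydyckptriflip{n}$ were the restriction to $\semibdy{n}{n-1}$ of a triangulation $\cT$ of $\productp{n}{n-1}$, and let $\cM=\cM(\cT)$ be its matching ensemble. The first point is that $\cM$ is already forced by $\cT|_{\semibdy{n}{n-1}}=\bdydyckptriflip{n}$: since the second factor has only $n$ vertices, every induced subgraph $\bipartitep{I}{J}$ with $|I|=|\overline J|$ has $|I|\le n$, so $I$ is a proper subset of $[n+1]$ and $\Delta_I\times\Delta_{\overline J}$ is a face of $\semibdy{n}{n-1}=\partial(\Delta_n)\times\Delta_{n-1}$. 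On subgraphs with $I\subseteq[n]$, $\cM$ is the ensemble of $\dyckptriflip{n}$; on those with $n+1\in I$ (which, having $|I|\le n$, must omit some vertex of $[n]$) the subgraph lies in a facet of $\semibdy{n}{n-1}$ on which $\bdydyckptriflip{n}$ and $\bdydyckptriext{n}$ coincide, so there $\cM$ is the ensemble of $\dyckptriext{n}$. As these are restrictions of honest triangulations, $\cM$ automatically satisfies~\ref{it:supports} and~\ref{it:closed}; the whole task is to produce a violation of the linkage axiom~\ref{it:manifold}.

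The violation lives on the full bipartite graph $\bipartitep{[n]}{n}$ of the flipped facet $\productp{[n]}{n-1}$. There $\cM$ must carry the matching of $\dyckptriflip{n}$, and I claim this matching is $\bfC^+$: each simplex $\bfC\setminus\{\bfv\}\in\cT^-$ (for $\bfv\in\bfC^-$) equals $\bfC^+\cup(\bfC^-\setminus\{\bfv\})$ and hence contains the perfect matching $\bfC^+$, so $\bfC^+$ is a face of $\dyckptriflip{n}$ and thus its unique ensemble matching on $\bipartitep{[n]}{n}$. Now invoke~\ref{it:manifold} for $\Match=\bfC^+$ with $v=n+1$: any admissible move must use $e'=(n+1,\overline k)$ together with the unique edge of $\bfC^+$ at $\overline k$; writing $\overline k=\overline{i+1}$, that edge is $(i,\overline{i+1})$, and the move yields the perfect matching $N^{(i)}:=\bigl(\bfC^+\setminus\{(i,\overline{i+1})\}\bigr)\cup\{(n+1,\overline{i+1})\}$ on $\bipartitep{([n]\setminus\{i\})\cup\{n+1\}}{n}$. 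Because $n+1$ lies in its first index set, $N^{(i)}\in\cM$ is equivalent to $N^{(i)}$ belonging to the ensemble of $\dyckptriext{n}$; hence it suffices to show that $N^{(i)}$ is a matching of $\dyckptriext{n}$ for no $i\in[n]$, which then defeats~\ref{it:manifold}.

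For that I would use the explicit description of the maximal simplices of $\dyckptriext{n}$ as extended Dyck paths — concatenations of Dyck paths on consecutive column-blocks of $\gr{(n+1)}{n}$, each block decorated with one square in column $n+1$ at its last row — together with their cyclic shifts. A cyclic shift by $t$ sends $N^{(i)}$ to $N^{(i+t)}$, so the $N^{(i)}$ form a single orbit and it is enough to check that $N^{(i')}$ is contained in no \emph{unshifted} extended Dyck path. If it were, then $(n+1,\overline{i'+1})$ would be one of the column-$(n+1)$ squares, so $\overline{i'+1}$ is the last row of some block $B$. When $i'<n$, the block $B$ has last column $i'+1$, and within a block the last column holds exactly one Dyck-path square, the diagonal one; but $N^{(i')}$ also contains $(i'+1,\overline{i'+2})$, a square in column $i'+1$ off that diagonal — contradiction. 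When $i'=n$, $\overline{i'+1}=\overline 1$ forces $B=\{1\}$, a singleton block whose only Dyck-path square is $(1,\overline 1)$, contradicting that $N^{(n)}$ contains $(1,\overline 2)$. In either case no $N^{(i)}$ belongs to the ensemble of $\dyckptriext{n}$, so the linkage axiom fails for $\cM$, and by Theorem~\ref{thm:SuhoYoo} no triangulation $\cT$ extending $\bdydyckptriflip{n}$ can exist.

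The genuinely laborious part, and where I expect the real difficulty, is not the three-axiom bookkeeping above but the auxiliary facts it rests on: that $\dyckptriext{n}$ and $\dyckptriflip{n}$ really are triangulations with the stated combinatorial descriptions, and that $\bdydyckptriflip{n}$, $\bdydyckptriext{n}$, $\dyckptriflip{n}$ and $\dyckptriext{n}$ restrict compatibly to the faces of $\semibdy{n}{n-1}$, so that the candidate $\cM$ is well defined and its values on the subgraphs used above are exactly $\bfC^+$ and the matchings of $\dyckptriext{n}$ invoked. These are precisely what the later sections of the paper establish for $\dyckptriext{n}$; granting them, the argument is the self-contained contradiction sketched here.
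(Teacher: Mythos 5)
Your proposal is correct and follows essentially the same route as the paper's proof: pass to the matching ensemble, observe that the flipped facet forces the matching $\bfC^+$ on $K_{[n],[\overline n]}$, and show the linkage axiom fails for $v=n+1$ because none of the single-edge exchanges $N^{(i)}$ lies in $\Mext_n$, reducing to one case by cyclic symmetry. The only (cosmetic) difference is that you verify $N^{(i)}\notin\Mext_n$ directly from the extended-Dyck-path description of the maximal simplices, whereas the paper reads off the unique matching of $\Mext_n$ on the relevant support from its \ref{it:supports}-construction and notes it differs from $\bfC^+$ in more than one edge.
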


Our proof of Theorem~\ref{theorem:non-extend} uses also the language of matching
ensembles, and is delayed until Section~\ref{sec:matchings}. 

\begin{corollary}
For every $m\geq n$ there exist non-extendable triangulations of
$\semiskelp{m}{n-1}{n-1}$.
\end{corollary}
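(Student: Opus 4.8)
The plan is to bootstrap the case $m=n$ of the corollary --- which is exactly Theorem~\ref{theorem:non-extend} --- up to all $m\ge n$, by planting a relabelled copy of the non-extendable triangulation $\bdydyckptriflip{n}$ inside $\semiskelp{m}{n-1}{n-1}=\Delta_m^{(n-1)}\times\Delta_{n-1}$ and then propagating non-extendability through a face. Label the vertices of the first factor by $[m+1]$, let $\Delta_{[n+1]}$ (resp.\ $\Delta_{[n]}$) denote the face of $\Delta_m$ spanned by the first $n+1$ (resp.\ $n$) of them, and note that $\Delta_{[n+1]}\times\Delta_{n-1}\cong\productp{n}{n-1}$ is a face of $\Delta_m\times\Delta_{n-1}$, while $\partial(\Delta_{[n+1]})\times\Delta_{n-1}=\Delta_{[n+1]}^{(n-1)}\times\Delta_{n-1}\cong\semibdy{n}{n-1}$ is a subcomplex of $\Delta_m^{(n-1)}\times\Delta_{n-1}$. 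The first, soft, step is the observation that \emph{if $\cS$ is any triangulation of $\Delta_m^{(n-1)}\times\Delta_{n-1}$ whose restriction to $\partial(\Delta_{[n+1]})\times\Delta_{n-1}$ is the relabelled $\bdydyckptriflip{n}$, then $\cS$ is non-extendable}: a triangulation $\cT$ of $\Delta_m\times\Delta_{n-1}$ extending $\cS$ would restrict, on the face $\Delta_{[n+1]}\times\Delta_{n-1}$, to a triangulation of $\productp{n}{n-1}$ whose further restriction to $\partial(\Delta_{[n+1]})\times\Delta_{n-1}$ equals $\bdydyckptriflip{n}$, contradicting Theorem~\ref{theorem:non-extend}.

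Everything thus reduces to exhibiting one such $\cS$, and for this I would use an extension lemma: \emph{every triangulation of $\Delta_{m-1}\times\Delta_{n-1}$ extends to a triangulation of $\Delta_m\times\Delta_{n-1}$}. To prove it, write $\Delta_m\times\Delta_{n-1}=\conv\bigl((\Delta_{m-1}\times\Delta_{n-1})\cup\{p_1,\dots,p_n\}\bigr)$ with $p_j=(\bfe_{m+1},\bfe_j)$, and add the $p_j$ one at a time to the given triangulation via the standard placing operation. A short check shows that $p_\ell$ never lies in the convex hull of $\Delta_{m-1}\times\Delta_{n-1}$ together with $p_1,\dots,p_{\ell-1}$ --- any point of that hull with first coordinate $\bfe_{m+1}$ is a convex combination of $p_1,\dots,p_{\ell-1}$, whose second coordinates avoid $\bfe_\ell$ --- so each placing step is legitimate and produces a triangulation of the next convex hull containing all simplices chosen so far; after the $n$-th step one obtains a triangulation of $\Delta_m\times\Delta_{n-1}$ extending the original. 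Iterating $m-n$ times, starting from the extended Dyck path triangulation $\dyckptriext{n}$ of $\productp{n}{n-1}$ (Theorem~\ref{thm:dyck_triangulation_ext}), yields a triangulation $\cT_0$ of $\Delta_m\times\Delta_{n-1}$ with $\cT_0|_{\Delta_{[n+1]}\times\Delta_{n-1}}=\dyckptriext{n}$.

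Now set $\cS_0:=\cT_0|_{\Delta_m^{(n-1)}\times\Delta_{n-1}}$. By construction $\cS_0$ restricts to $\bdydyckptriext{n}$ on $\partial(\Delta_{[n+1]})\times\Delta_{n-1}$, and in particular to $\dyckptri{n}$ on the maximal cell $\Delta_{[n]}\times\Delta_{n-1}\cong\productn$. Exactly as in the passage from $\bdydyckptriext{n}$ to $\bdydyckptriflip{n}$, I would then perform inside that cell the geometric bistellar flip supported on the maximal circuit~\eqref{eq:circuitflip}, replacing $\dyckptri{n}$ by $\dyckptriflip{n}$ and leaving $\cS_0$ unchanged elsewhere; being supported on a circuit of maximal dimension, this flip leaves the induced triangulation of $\partial(\Delta_{[n]})\times\Delta_{n-1}$ intact, hence stays compatible with the cells of $\Delta_m^{(n-1)}\times\Delta_{n-1}$ glued to $\Delta_{[n]}\times\Delta_{n-1}$ and produces a genuine triangulation $\cS$ of $\Delta_m^{(n-1)}\times\Delta_{n-1}$. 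Since $\cS|_{\partial(\Delta_{[n+1]})\times\Delta_{n-1}}=\bdydyckptriflip{n}$, the first step shows $\cS$ is non-extendable.

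The part I expect to require the most care is the extension lemma --- checking that the one-point-at-a-time placing is legitimate at every stage and that the result really restricts to $\dyckptriext{n}$ on the face $\Delta_{[n+1]}\times\Delta_{n-1}$ --- together with the verification that the bistellar flip survives as a purely local modification inside the larger complex $\Delta_m^{(n-1)}\times\Delta_{n-1}$; both hinge on the same point, namely that the flip is supported on an interior circuit and therefore does not touch the boundary of the cell along which that cell is glued to the rest of the complex. The remaining arguments are routine bookkeeping about restrictions of triangulations to faces.
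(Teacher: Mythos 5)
Your proposal is correct and follows essentially the same route as the paper: the paper also extends $\dyckptriext{n}$ to a triangulation of $\productp{m}{n-1}$ by an incremental construction (it cites a pushing refinement where you use placing, which is the same standard device), restricts to the skeleton, performs the flip on the cell $\Delta_{[n]}\times\Delta_{n-1}$, and lets non-extendability propagate through the face $\Delta_{[n+1]}\times\Delta_{n-1}$ via Theorem~\ref{theorem:non-extend}. Your write-up merely spells out the details that the paper leaves as a two-sentence sketch.
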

\begin{proof}
The same construction that produces $\bdydyckptriflip{n}$ from
$\dyckptriext{n}$ can be applied on any triangulation of $\productp{m}{n-1}$
that restricts to $\dyckptriext{n}$ on a face. Such a triangulation is
easy to construct from $\dyckptriext{n}$, for example, by doing a pushing
refinement (cf. \cite[Lemma 4.3.2]{DeLoeraRambauSantos}).
\end{proof}


\section{Perfect matching representations}\label{sec:matchings}

In this section, we present the proofs of Theorem~\ref{thm:dyck_triangulation} and Theorem~\ref{thm:dyck_triangulation_ext}, which assert that the Dyck path triangulation $\dyckptri{n}$ and its extension $\dyckptriext{n}$ are indeed triangulations, along with the proof of Theorem~\ref{theorem:non-extend}. All are phrased in terms of the matching ensemble representation for triangulations of $\product$, characterized in Theorem~\ref{thm:SuhoYoo}. Therefore, our first task is to describe the matching ensembles arising from $\dyckptri{n}$ and $\dyckptriext{n}$, which closely resembles the construction of $\dyckptri{n}$ and of $\dyckptriext{n}$.

\subsection{Matching ensemble of the Dyck path triangulation $\dyckptri{n}$}
We begin with the set of all perfect matchings on the subgraphs of $\bipartitep{n}{ n}$ induced by $I\subset [n]$, $\overline J\subset[\overline n]$ (with $|I|=|\overline J|$) which are non-crossing (nc) and weakly increasing (wi), that is, those matchings $\Match$ that satisfy
\begin{equation}\tag{\footnotesize \textbf{nc+wi}} \label{eq:incmatching}
\begin{cases}
i< i' \Rightarrow \overline j< \overline j ' \text{ for every } (i,\overline j),(i',\overline j')\in\Match, \\
i\leq \overline j \text{ for every } (i,\overline j)\in \Match
\end{cases}.
\end{equation}
Next, for $\ell\in[n]$, we introduce the collection of matchings of the form
\begin{equation} \tag{\footnotesize \textbf{cyc}} \label{eq:shiftmatch}
\left\{ \Big(i+\ell \pmod n,\ \overline j+\ell\pmod n\Big)\colon (i,\overline j)\in\Match,\ \Match\text{  fulfills~\eqref{eq:incmatching}} \right\},
\end{equation}
obtained by ``cyclically shifting'' the indices of the perfect matchings that satisfy~\eqref{eq:incmatching}, and call $\cM_n$ the set of all matchings obtained after ranging over all $\ell\in[n]$ (see Figure~\ref{fig:matchandorbit}).

\begin{figure}[htbp]
\begin{center}
\includegraphics[width=0.6\textwidth]{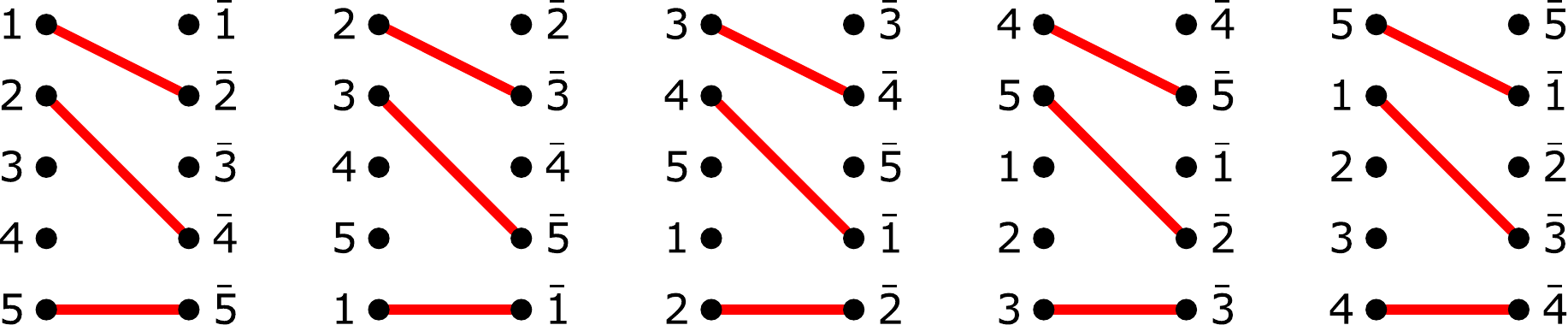}
\caption{A perfect matching on $K_{\{1,2,5\},\{\overline 2,\overline 4, \overline 5\}}$ satisfying~\eqref{eq:incmatching}, together with its orbit of perfect matchings gotten as~\eqref{eq:shiftmatch}.}
\label{fig:matchandorbit}
\end{center}
\end{figure}
\begin{proposition} \label{pro:dyckmatchensemble}
The collection of matchings $\cM_{n}$ constitutes a matching ensemble.
\end{proposition}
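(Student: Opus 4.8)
The plan is to verify the three matching-ensemble axioms (SA), (CA), (LA) from Definition~\ref{def:ensemble} directly for the collection $\cM_n$, exploiting the cyclic symmetry built into its definition via~\eqref{eq:shiftmatch}. The key preliminary observation is that $\cM_n$ is, by construction, invariant under the $\ZZ/n\ZZ$-action that simultaneously shifts the indices of $[n]$ and $[\overline n]$; hence each axiom only needs to be checked ``up to a shift'', and in many cases it suffices to verify it for the distinguished matchings satisfying~\eqref{eq:incmatching}. I would also record the elementary fact that a matching on $\bipartitep{I}{J}$ with $|I|=|\overline J|=t$ which is non-crossing is automatically the unique non-crossing matching on that subgraph (it pairs the $\ell$-th smallest element of $I$ with the $\ell$-th smallest of $\overline J$); the weakly-increasing condition~\eqref{eq:incmatching} is then the extra constraint $i_\ell\le\overline j_\ell$ for all $\ell$.

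For \textbf{(SA)}, fix $I\subset[n]$, $\overline J\subset[\overline n]$ with $|I|=|\overline J|=t$. I would show there is exactly one shift $\ell\in\ZZ/n\ZZ$ for which the $\ell$-shifted non-crossing matching on the corresponding subgraph is weakly increasing. Concretely, sort $I=\{i_1<\dots<i_t\}$ and $\overline J=\{\overline j_1<\dots<\overline j_t\}$; the candidate matchings in $\cM_n$ on $\bipartitep{I}{J}$ are obtained by cyclically rotating one list relative to the other, and the weakly-increasing constraint, read on the cycle $\ZZ/n\ZZ$, is a system of arc-containment conditions that has a unique rotation as solution — this is the standard ``cycle lemma'' phenomenon for lattice-path/necklace arguments. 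I expect this existence-and-uniqueness of the correct rotation to be the main obstacle: one must argue carefully that the wrap-around does not create a second admissible shift, and that it always creates at least one. The cleanest formulation is probably to encode the matching as a cyclic sequence of up/right steps (a ``necklace'' of $n$ steps in each coordinate) and invoke the cycle lemma / the fact that exactly one cyclic rotation of a balanced word is a Dyck prefix pattern of the required shape.

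For \textbf{(CA)}, closure under perfect sub-matchings, I would note that deleting some pairs from a matching satisfying~\eqref{eq:incmatching} on $\bipartitep{I}{J}$ yields a matching on the induced subgraph $\bipartitep{I'}{J'}$ that still satisfies both conditions in~\eqref{eq:incmatching} verbatim (both conditions are ``local'', i.e. they quantify over pairs of edges in the matching, so they are inherited by any sub-collection of edges); since the cyclic shift commutes with restriction, the same holds for every matching in $\cM_n$, giving (CA) immediately.

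For \textbf{(LA)}, fix $\Match\in\cM_n$ on $\bipartitep{I}{J}$ and a new vertex $v\notin I\cup\overline J$; after applying a shift we may assume $\Match$ satisfies~\eqref{eq:incmatching}. Say $v\in[n]$ (the case $v\in[\overline n]$ is symmetric). The sorted list $I\cup\{v\}$ inserts $v$ into some gap, and I would produce the required edge swap explicitly: let $e\in\Match$ be the pair $(i_r,\overline j_r)$ where $i_r$ is the neighbour of $v$ in $I$ whose ``slot'' $v$ should steal (the one immediately larger than $v$, with a wrap-around convention), and let $e'=(v,\overline j_r)$; one checks that $v\le\overline j_r$ holds (because $i_r\le\overline j_r$ and $v<i_r$ in the non-wrap case, and a short separate check handles the wrap case, possibly after choosing the matching in the orbit more carefully so that the relevant shift is available), and that $(\Match\setminus e)\cup e'$ is again non-crossing and weakly increasing on $\bipartitep{(I\setminus i_r)\cup v}{J}$, hence lies in $\cM_n$; the auxiliary edge $e$ and $e'$ share the vertex $\overline j_r$. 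The only delicate point is the wrap-around case $v$ larger than all of $I$: there the correct move uses the cyclic structure, swapping against the smallest element of $I$ and invoking a shifted representative of $\Match$, and I would handle it by the same cycle-lemma bookkeeping used in (SA). Once the three axioms are in place, Proposition~\ref{pro:dyckmatchensemble} follows, and with it (via Theorem~\ref{thm:SuhoYoo}) the fact that $\cM_n$ is the matching ensemble of an honest triangulation of $\productn$.
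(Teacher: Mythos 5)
Your proposal is correct and follows essentially the same route as the paper's proof: \textbf{(SA)} via the cycle-lemma/ballot argument showing exactly one cyclic rotation yields an admissible (non-crossing, weakly increasing) pairing, \textbf{(CA)} by noting the defining conditions are inherited by sub-matchings, and \textbf{(LA)} by an explicit single-edge swap with a case analysis on where the new vertex $v$ falls, reducing to a distinguished representative of the orbit by shift-invariance. The paper phrases (SA) in terms of an interleaved totally ordered string of $I\cup\overline J$ and treats the case $v\in[\overline n]$ explicitly in (LA), but these are only cosmetic differences.
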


\begin{proof}
Proof of~\ref{it:supports}. View the elements of $[n]\cup[\overline n]$ as a totally ordered string that extends the order of $[n]$ and $[\overline n]$ with $i\prec \overline j$ whenever $i\leq\overline j$ for $i\in [n]$, $\overline j\in [\overline n]$. For every $I\subset[n]$ and $\overline J\subset[\overline n]$ with $|I|=|\overline J|$, accordingly view the subset $I\cup \overline J$ as a totally ordered substring of $[n]\cup[\overline n]$.

To define a matching $\Match'$ on $K_{I,\overline J}$, ``cyclically rotate'' the ordering $\prec$ (by putting the first elements last) so that all the final substrings of $I\cup \overline J$, consisting of the last elements in the string $I\cup \overline J$, have at least as many elements from $\overline J$ as from $I$. Then, $\Match'$ is gotten by matching the $k$-th element of $I$ with the $k$-th element of $\overline J$ in the rotated string $I\cup \overline J$. It is easy to check that, given $I$ and $\overline J$, this rule uniquely determines $\Match'$; it is illustrated in Figure~\ref{fig:matchingproof1}.

Denoting the first element of the rotated string $I\cup \overline J$ by $\ell+1$ (which belongs to $I$), we see that the resulting $\Match'$ has the form~\eqref{eq:shiftmatch}. Conversely, all matchings of the form~\eqref{eq:shiftmatch} can be obtained as explained above.

\begin{figure}[htbp]
 \centering
 \begin{subfigure}{0.3\textwidth} \centering
 \includegraphics[width=\textwidth]{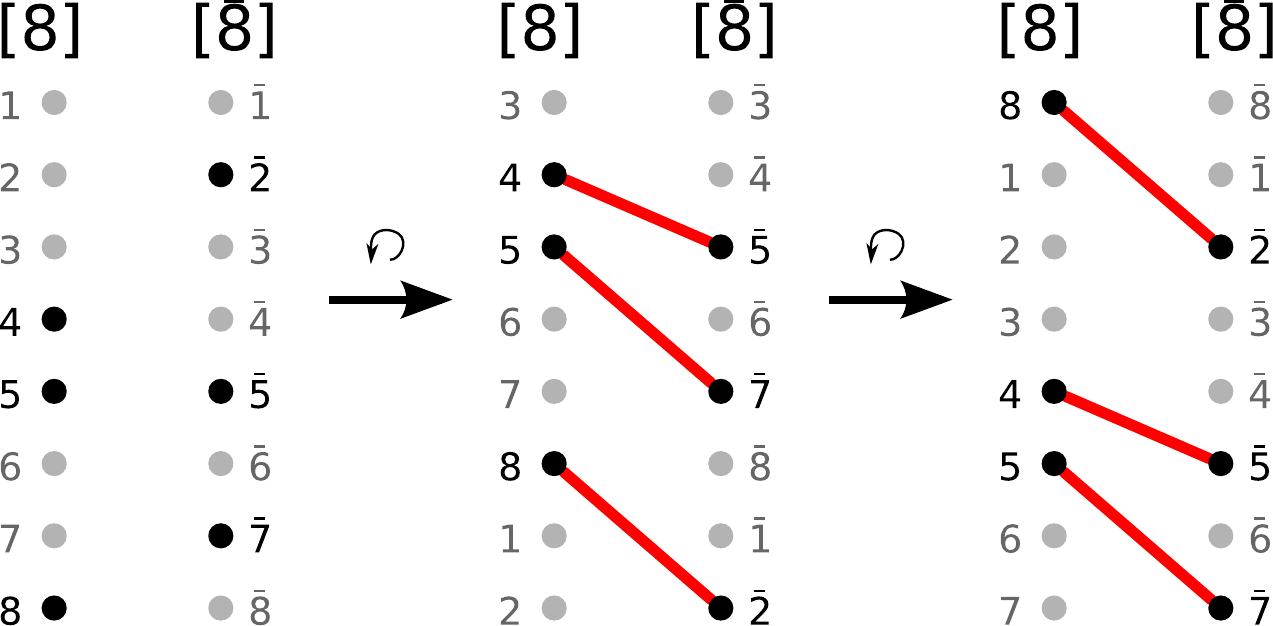}  
 \caption{Retrieving the unique perfect matching on $K_{\{4,5,8\},\{\overline 2, \overline 5, \overline 7\}}$.}
 \label{fig:matchingproof1}
 \end{subfigure} \quad
 \begin{subfigure}{0.6\textwidth} \centering
 \includegraphics[width=0.85\textwidth]{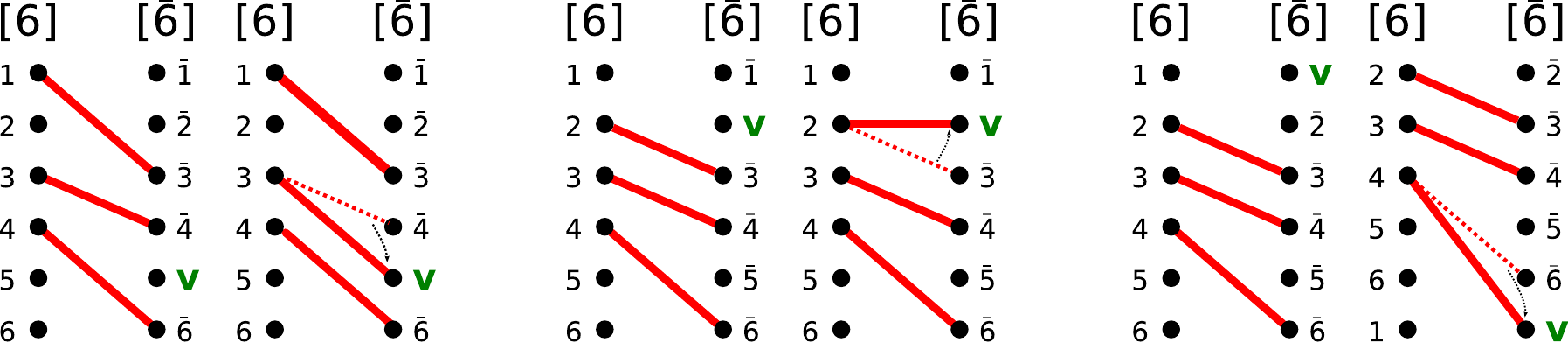}  
 \caption{Three possible situations when checking~\ref{it:manifold} for the collection of matchings $\cM_n$.}
 \label{fig:matchingproof2}
 \end{subfigure} 
 \caption{Proof of Proposition~\ref{pro:dyckmatch}}
 \label{fig:matchingproof}
 \end{figure}

Proof of~\ref{it:closed}. Clearly, if $\Match'$ is a perfect sub-matching of the perfect matching $\Match\in \cM_n$ on $\bipartitep{I}{ J}$, it is still of the form~\eqref{eq:shiftmatch}, so $\Match'\in\cM_n$.

Proof of~\ref{it:manifold}. Assume $\Match$ is a perfect matching on $\bipartitep{I}{ J}$ satisfying~\eqref{eq:incmatching}, and let $v\in [\overline n]\setminus \overline J$ (the general case follows by rotation and symmetry). If there is some $(i,\overline j)\in\Match$ with $v > \overline j$, set \linebreak $\overline j_0:=\max\{ \overline j<v \colon (i,\overline j )\in\Match\}$ and define
\[
\Match'=\Match\setminus (i_0,\overline j_0)\cup(i_0,v).
\]
On the other hand, if $v< \overline j$ for every $(i,\overline j)\in \Match$, set $i_1:=\min\{i \colon (i,\overline j)\in \Match\}$, ${i_2:=\max\{i\colon (i,\overline j)\in\Match\}}$, and define:
\[
\Match':=\begin{cases}
\Match\setminus (i_1,\overline j_1)\cup (i_1,v) & \text{if }v\geq i_1\\
\Match\setminus (i_2,\overline j_2)\cup (i_2,v) & \text{otherwise}
\end{cases}
\]

Either way, $\Match'$ is a perfect matching on $\bipartitep{I}{ J\setminus \overline j \cup v}$ obtained as~\eqref{eq:shiftmatch}; hence $\Match'\in\cM_n$. The three cases are drawn in Figure~\ref{fig:matchingproof2}.
\end{proof}

This settles the proof that the Dyck path triangulation is indeed a triangulation.

\begin{proposition} \label{pro:dyckmatch}
The Dyck path triangulation $\dyckptri{n}$ is the triangulation associated to the matching ensemble $\cM_{n}$.
\end{proposition}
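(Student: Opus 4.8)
The plan is to show that the triangulation determined by the matching ensemble $\cM_n$ (via Lemma~\ref{lem:recovertriangulation}) has exactly the maximal simplices described in the definition of $\dyckptri{n}$, namely the Dyck paths in $\gr{n}{n}$ together with their cyclic shifts. By Proposition~\ref{pro:dyckmatchensemble} and Theorem~\ref{thm:SuhoYoo}, $\cM_n$ is the matching ensemble of a genuine triangulation $\cT$ of $\productn$; by Lemma~\ref{lem:recovertriangulation}, a spanning tree $s$ of $\bipartitep{n}{n}$ is a maximal simplex of $\cT$ if and only if no cycle in $s\cup\Match$ alternates between $s$ and $\Match$, for every $\Match\in\cM_n$. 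So the task reduces to a purely combinatorial equivalence: \emph{a spanning tree $s$ of $\bipartitep{n}{n}$ is ``compatible'' with every matching in $\cM_n$ in the above sense if and only if $s$ is a cyclic shift of a non-crossing weakly increasing spanning tree} (equivalently, of a Dyck path in the grid picture).

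First I would handle the case where $s$ is itself non-crossing and weakly increasing (a Dyck path), i.e. the ``$\ell=0$'' sector. For such an $s$, I want to check it is compatible with every $\Match\in\cM_n$. The key observation: a cycle in $s\cup\Match$ alternating between $s$ and $\Match$ corresponds, by Lemma~\ref{lem:bipartite}\ref{it:bipartitecircuits}, to a circuit of $\productn$ split so that one ``side'' lies in $s$ and the other in $\Match$. Using the grid/coordinate description of $\cM_n$'s matchings from Proposition~\ref{pro:dyckmatchensemble} — each matching, after a cyclic rotation, pairs the $k$-th element of $I$ with the $k$-th element of $\overline J$ in the rotated $\prec$-order — one shows that the weak-increase condition $i\leq\overline j$ on the edges of $s$ together with the corresponding monotonicity of $\Match$ forbids such an alternating cycle: intersecting an increasing configuration with another increasing configuration cannot close up into an alternating circuit, because the signs forced along the cycle by Lemma~\ref{lem:bipartite}\ref{it:bipartitecircuits} contradict the ordering. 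Conversely, if $s$ is a spanning tree in the $\ell=0$ sector that is \emph{not} non-crossing or not weakly increasing, I exhibit a specific $\Match\in\cM_n$ (built from the ``bad'' pair of edges of $s$ that witnesses a crossing or a violation $i>\overline j$) creating an alternating $4$-cycle, so $s$ is excluded. This matches the count: non-crossing weakly increasing spanning trees of $\bipartitep{n}{n}$ are in bijection with Dyck paths, so their number is the Catalan number $\catnum{n}$.

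The general case follows by equivariance. The collection $\cM_n$ is invariant under the simultaneous cyclic shift $\sigma\colon(i,\overline j)\mapsto(i+1,\overline j+1)\pmod n$ of both index sets, since the family~\eqref{eq:shiftmatch} is literally defined as the $\sigma$-orbit of the nc+wi matchings. Hence the triangulation $\cT$ recovered from $\cM_n$ is $\sigma$-invariant, and $s$ is a maximal simplex of $\cT$ if and only if $\sigma^{\ell}(s)$ is, for every $\ell$. Combining this with the $\ell=0$ analysis above gives that the maximal simplices of $\cT$ are exactly the $\sigma$-orbits of Dyck paths — which is the definition of $\dyckptri{n}$. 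Since a triangulation is determined by its set of maximal simplices, $\cT=\dyckptri{n}$; in particular this simultaneously re-proves that $\dyckptri{n}$ \emph{is} a triangulation, consistent with Theorem~\ref{thm:dyck_triangulation}.

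I expect the main obstacle to be the forward direction of the $\ell=0$ case: verifying that \emph{every} matching in $\cM_n$ — not just the nc+wi ones, but all their cyclic shifts — fails to produce an alternating cycle with a fixed Dyck path $s$. A cyclic shift of an nc+wi matching need not itself be weakly increasing, so the ``two increasing families don't form an alternating cycle'' heuristic must be applied in the rotated coordinate system, tracking carefully how the wraparound interacts with the order $\prec$ and with the weak-increase constraint on $s$. I would organize this by reducing to a minimal alternating cycle, reading off the cyclic sequence of row and column indices, and deriving a contradiction from the alternation of signs forced by Lemma~\ref{lem:bipartite}\ref{it:bipartitecircuits} together with the monotonicity inherited from~\eqref{eq:incmatching}; a short case analysis on where the ``rotation point'' $\ell$ sits relative to the cycle's indices should close it.
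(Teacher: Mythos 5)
The skeleton of your plan (reduce via Lemma~\ref{lem:recovertriangulation} to a compatibility check between spanning trees and matchings, use the cyclic equivariance of $\cM_n$) is the same as the paper's, but the step you yourself flag as ``the main obstacle'' --- showing that a Dyck path $s$ forms no alternating cycle with \emph{any} $\Match\in\cM_n$, including the shifted ones --- is exactly where your argument is missing the idea that makes the proof short. No sign-chasing or case analysis on the rotation point is needed: if an alternating cycle existed in $s\cup\Match$, its two halves would be two \emph{distinct} perfect matchings $\Match_1\subset s$ and $\Match_2\subset\Match$ on the same support $I\cup\overline J$. Now $\Match_2\in\cM_n$ by the closure axiom~\ref{it:closed}, and $\Match_1\in\cM_n$ because every perfect submatching of a cyclic shift of a non-crossing weakly increasing spanning tree is itself of the form~\eqref{eq:shiftmatch}. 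The supports axiom~\ref{it:supports} then forces $\Match_1=\Match_2$, which is absurd since the two halves of an alternating cycle are edge-disjoint. This handles all simplices of $\dyckptri{n}$ and all matchings of $\cM_n$ uniformly, so the reduction to the ``$\ell=0$ sector'' and the worry about wraparound simply do not arise.

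Your converse direction is also on shaky ground. ``A spanning tree in the $\ell=0$ sector that is not non-crossing or not weakly increasing'' is not a well-posed class to exclude: a general spanning tree of $\bipartitep{n}{n}$ does not live in any single sector, and the cyclic shifts of Dyck paths are themselves typically neither non-crossing nor weakly increasing yet must \emph{not} be excluded, so the statement ``non-nc+wi $\Rightarrow$ there is a bad $\Match$'' is false as written and the equivariance reduction does not repair it cleanly. The paper sidesteps all of this by counting: there are $\catnum{n-1}$ Dyck paths (not $\catnum{n}$ as in your sanity check), hence $n\catnum{n-1}=\binom{2n-2}{n-1}$ maximal simplices in $\dyckptri{n}$, which is already the total number of full-dimensional simplices in any triangulation of $\productn$; once every simplex of $\dyckptri{n}$ is known to belong to the triangulation associated to $\cM_n$, there is no room for any others. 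I would recommend replacing both of your delicate combinatorial steps with these two observations.
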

\begin{proof}
 By Lemma~\ref{lem:recovertriangulation}, we need to check that there is no circuit $\bfC$ in $s\cup \Match$ alternating between $s$ and~$\Match$, for a simplex $s\in \dyckptri{n}$ and a matching $\Match\in\cM_n$. If $\bfC$ existed,  then there would be matchings $\Match_1\subset s$ and $\Match_2\subset \Match$ that have the same support. However, it is straightforward to check that every matching $\Match_1\subset s$ belongs to $\cM_n$ by construction, and hence fulfills Axiom~\ref{it:supports}.
This shows that every simplex $s \in \dyckptri{n}$ is a simplex in the triangulation associated to the matching ensemble~$\cM_n$. On the other hand, no further simplices belong to the triangulation associated to the matching ensemble~$\cM_n$, for $\dyckptri{n}$ already exhausts the $n\catnum{n-1}=\binom{2n-2}{n-1}$ full-dimensional simplices every triangulation of~$\Delta_{n-1}\times\Delta_{n-1}$ has. 
\end{proof}

\subsection{Matching ensemble of the extended Dyck path triangulation $\dyckptriext{n}$}

Now we start with the set of matchings $\Match$ between $I\subset[n+1]$ and $\overline J\subset[\overline n]$ with the property
\begin{equation}\label{eq:incextmatching}
\begin{cases}
i< i' \Rightarrow \overline j< \overline j' \text{ for every } (i,\overline j),(i',\overline j')\in\Match 
, \\
i\leq \overline j \text{ for every } (i,\overline j)\in \Match \text{ with } i\neq n+1.
\end{cases} \tag{\ref*{eq:incmatching}$^\text{ext}$}
\end{equation}
As before, we consider the set of perfect matchings on induced subgraphs of $\bipartitep{n+1}{ n}$ of the form

\begin{align} \label{eq:shiftextmatch}
&\left\{ \Big(\rho_\ell(i),\ \overline j+\ell\pmod n\Big)\colon (i,\overline j)\in\Match,\ \Match\text{ fulfills~\eqref{eq:incextmatching}} \right\}, \tag{\ref*{eq:shiftmatch}$^\text{ext}$} \\ \text{where }\rho_\ell(i):=&\begin{cases} i+\ell \pmod n &\text{if }i\neq n+1 \\ n+1 & \text{otherwise} \end{cases} \nonumber,
\end{align}
with $\ell$ ranging over $[n]$, and denote it by $\Mext_n$.

\begin{proposition}\label{pro:extdyckmatchensemble}
The collection $\Mext_n$ of matchings constitutes a matching ensemble.
 \end{proposition}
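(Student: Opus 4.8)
The plan is to verify the three matching ensemble axioms for $\Mext_n$, mimicking the proof of Proposition~\ref{pro:dyckmatchensemble} but keeping careful track of the distinguished vertex $n+1$, which is forced to be matched to the top vertex of $[\overline n]$ (after cyclic shift) whenever it is present. The key observation is that the cyclic action $\rho_\ell$ fixes $n+1$, so the ``combinatorics of a Dyck path'' and the ``combinatorics of the extra square'' essentially decouple: on any induced subgraph $\bipartitep{I}{J}$ with $|I|=|\overline J|$, if $n+1\notin I$ we are in exactly the situation of Proposition~\ref{pro:dyckmatchensemble}, and if $n+1\in I$ then $n+1$ must be matched to the largest element of $\overline J$ in the appropriate rotated order, and the rest of the matching is governed by the same rule applied to $I\setminus\{n+1\}$ and $\overline J$ minus that top element.

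For axiom~\ref{it:supports} I would extend the total order $\prec$ on $[n]\cup[\overline n]$ to a total order on $[n+1]\cup[\overline n]$ by declaring $n+1$ to be larger than every $\overline j\in[\overline n]$ (equivalently, $\overline j \prec n+1$ for all $\overline j$); this reflects that in $\gr{(n+1)}{n}$ the square $(n+1,\overline n)$ sits after every square of a Dyck path. Given $I\subset[n+1]$, $\overline J\subset[\overline n]$ with $|I|=|\overline J|$, I cyclically rotate $\prec$ (moving initial elements to the end, but always keeping $n+1$ at the very end since $\rho_\ell$ fixes it) until every final substring of $I\cup\overline J$ contains at least as many elements of $\overline J$ as of $I$; then match the $k$-th element of $I$ with the $k$-th element of $\overline J$ in the rotated string. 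As in Proposition~\ref{pro:dyckmatchensemble} this rule is well-defined and uniquely determined, it always produces a matching of the form~\eqref{eq:shiftextmatch} (when $n+1\in I$ it gets paired with the top element of $\overline J$, which is precisely the ``extra square'' condition $i\leq\overline j$ being vacuous for $i=n+1$), and conversely every matching of the form~\eqref{eq:shiftextmatch} arises this way. Axiom~\ref{it:closed} is again immediate: deleting pairs from a matching of the form~\eqref{eq:shiftextmatch} leaves a matching of the same form — if the deleted pairs happen to include $(n+1,\cdot)$, the surviving matching simply lies over an index set not containing $n+1$ and still satisfies~\eqref{eq:incextmatching}.

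Axiom~\ref{it:manifold} is where I expect the real work, and it splits into two regimes. If the new vertex $v$ lies in $[\overline n]\setminus\overline J$, the argument of Proposition~\ref{pro:dyckmatchensemble} goes through almost verbatim: one finds the appropriate $(i,\overline j)\in\Match$ to re-route to $v$, using $\overline j_0:=\max\{\overline j<v\colon(i,\overline j)\in\Match\}$ when such $\overline j$ exists and otherwise the min/max-index adjustment; the only extra care is to check that when $i=n+1$ is the re-routed vertex the resulting matching still satisfies~\eqref{eq:incextmatching}, which it does because the condition on $n+1$ is vacuous. If instead $v\in[n+1]\setminus I$, there are two cases: $v\neq n+1$ is handled by the symmetric (rotated/transposed) version of the previous argument, exactly as the parenthetical ``the general case follows by rotation and symmetry'' in Proposition~\ref{pro:dyckmatchensemble}; and $v=n+1$ is the genuinely new case — here one must re-route the edge incident to the top element of $\overline J$ (in the relevant rotated order) so that $n+1$ picks it up, i.e. if $(i^*,\overline j^*)\in\Match$ with $\overline j^*$ maximal in the rotated order, set $\Match':=\Match\setminus(i^*,\overline j^*)\cup(n+1,\overline j^*)$, and verify $\Match'$ has the form~\eqref{eq:shiftextmatch}. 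The main obstacle is bookkeeping: making sure that after the cyclic rotation the edge being modified is indeed adjacent (shares a vertex) to an edge containing $v$, and that the modified matching still lies in the orbit~\eqref{eq:shiftextmatch} rather than falling outside it; once the $v=n+1$ case is organized correctly, the rest reduces to the already-established Dyck-path case. I would conclude by noting that, just as Proposition~\ref{pro:dyckmatch} follows from Proposition~\ref{pro:dyckmatchensemble}, the analogous counting argument ($\dyckptriext{n}$ exhausts all full-dimensional simplices) identifies the triangulation associated to $\Mext_n$ with $\dyckptriext{n}$.
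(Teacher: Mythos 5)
Your overall strategy is the paper's: reduce to Proposition~\ref{pro:dyckmatchensemble} whenever $n+1$ is not involved, and treat $n+1$ as a distinguished vertex that must pick up the top element of $\overline J$ after rotation. Two of your stated rules, however, would fail as written. First, in your verification of~\ref{it:supports} you keep $n+1$ at the very end of the rotated string and then ask that \emph{every} final substring of $I\cup\overline J$ contain at least as many elements of $\overline J$ as of $I$; when $n+1\in I$ the final substring $\{n+1\}$ already violates this, so no rotation satisfies your condition. The correct formulation (and the one the paper uses) excises $n+1$, rotates so that every final substring of $I'\cup\overline J$ with $I'=I\setminus n+1$ has \emph{strictly more} elements of $\overline J$ than of $I'$ (which also pins down the rotation uniquely), matches $I'$ to $\overline J$ in order, and hands the single leftover element of $\overline J$ to $n+1$. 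Your intended outcome is right, but the rule you wrote does not produce it.

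Second, and more substantively, your claim that the case $v\in[n+1]\setminus I$ with $v\neq n+1$ is ``handled by the symmetric (rotated/transposed) version of the previous argument'' is not correct when $n+1\in I$: the two factors are not interchangeable here, precisely because of the special vertex. In that case $\Match$ contains an edge $(n+1,\overline j^*)$, and the linkage move may have to remove \emph{that} edge and give $\overline j^*$ to $v$ (this happens exactly when $v$ exceeds every $i\in I\setminus n+1$ and $v\leq\overline j^*$); when instead $v>\overline j^*$ one must remove the edge at $i_1=\min\{i\colon(i,\overline j)\in\Match\}$ and reattach $\overline j_1$ to $v$, and one must check the result is of the form~\eqref{eq:shiftextmatch} after a further rotation. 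Neither subcase arises in Proposition~\ref{pro:dyckmatchensemble}, so this is genuinely new casework that your plan skips. Your treatment of $v=n+1$ and of $v\in[\overline n]$ (including the observation that the rerouted edge may be the $n+1$-edge, for which condition~\eqref{eq:incextmatching} is vacuous) does line up with the paper's cases. The closing remark about the counting argument identifying the triangulation belongs to Proposition~\ref{pro:extdyckmatch}, not to this statement.
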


\begin{proof}
We only have to verify the conditions in Definition~\ref{def:ensemble} when $n+1\in[n+1]$ gets involved; the remaining cases have already been dealt with in Proposition~\ref{pro:dyckmatchensemble}. 

Proof of~\ref{it:supports}. Let $I\subset[n+1]$ and $\overline J\subset[\overline n]$ with $n+1\in I$ and $|I|=|\overline J|$. We order $[n]\cup [\overline n]$ again as in the proof of Proposition~\ref{pro:dyckmatchensemble}, and consider the substring $I'\cup \overline J$, where $I'=I\setminus n+1$. This time, we cyclically rotate the order $\prec$ so that all final substrings of $I'\cup \overline J$ have \emph{strictly more} elements from $\overline J$ than from $I$ (thereby, in particular, the ordering of the substring $I'\cup \overline J$ becomes fixed). 

Let $\Match$ be the perfect matching on $\bipartitep{I}{ J}$ that pairs the $k$-th element of $I'$ with the $k$-th element of~$\overline J$ in the rotated string $I'\cup\overline J$, and $n+1$ with the \emph{unpaired} last element from $\overline J$ (cf. Figure~\ref{fig:extmatchproof1}). This yields a unique matching on $\bipartitep{I}{ J}$ of the form~\eqref{eq:shiftextmatch}. Conversely, all perfect matchings on induced subgraphs of $\bipartitep{n+1}{ n}$ of the form~\eqref{eq:shiftextmatch} can be obtained with this rule.

\begin{figure}[htbp]
 \centering
 \begin{subfigure}{0.2\textwidth} \centering
 \includegraphics[width=0.8\textwidth]{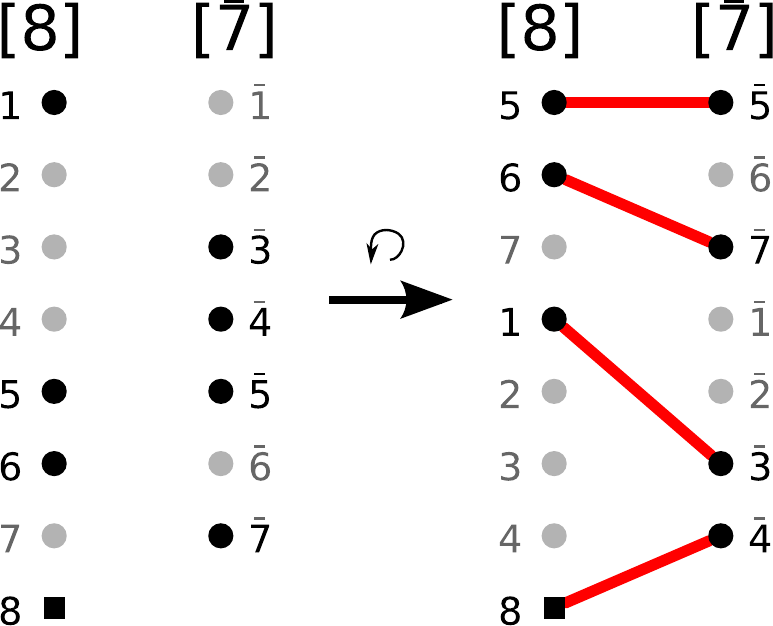}  
 \caption{\footnotesize Retrieving the unique perfect matching on $K_{\{1,5,6,8\},\{\overline 3, \overline 4, \overline 5, \overline 7\}}$.}
 \label{fig:extmatchproof1}
 \end{subfigure} \quad
 \begin{subfigure}{0.75\textwidth} \centering
 \includegraphics[width=0.85\textwidth]{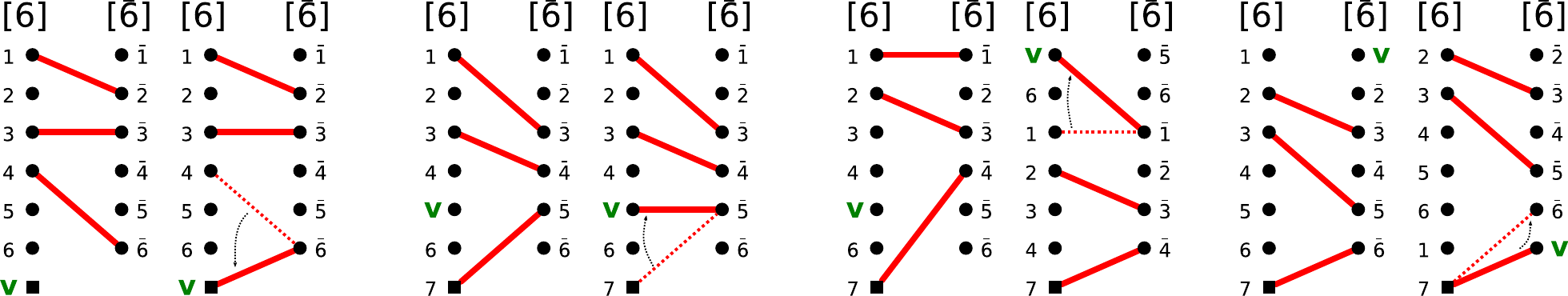}  
 \caption{Some of the possible situations when checking~\ref{it:manifold} for the collection of matchings $\Mext_n$.}
 \label{fig:extmatchproof2}
 \end{subfigure} 
 \caption{Proof of Proposition~\ref{pro:extdyckmatch}}
 \label{fig:extmatchproof}
 \end{figure}

Proof of~\ref{it:closed}. If $\Match$ fulfills \eqref{eq:shiftextmatch} then, trivially, so do all its perfect sub-matchings.

Proof of~\ref{it:manifold}. Let $\Match$ be a perfect matching on $\bipartitep{I}{ J}$ for which~\eqref{eq:incextmatching} holds, and let $v\in([n+1]\cup[\overline n])\setminus( I\cup\overline J)$. We distinguish several cases, that we have depicted in Figure~\ref{fig:extmatchproof2}:
\begin{enumerate}
\item $v=n+1$: set $i'=\max\{i\colon (i,\overline j)\in \Match\}$, then $\Match\setminus (\overline i',\overline j')\cup (v,\overline j')$ also satisfies~\eqref{eq:incextmatching}.
\item $v\in [n+1]$, $v\neq n+1$ and $v> i$ for all $(i,\overline j)\in\Match$: write $(n+1,\overline j^*)\in\Match$, then either
\begin{itemize}
\item $v\leq \overline j^*$ and $\Match\setminus (n+1,\overline j^*)\cup (v,\overline j^*)$ is of the form~\eqref{eq:incextmatching}, or
\item $v>\overline j^*$, in which case $\Match\setminus (i_1,\overline j_1)\cup(v,\overline j_1)$ is gotten as~\eqref{eq:shiftextmatch}, where $i_1:=\min\{i\colon (i,\overline j)\in\Match\}$.
\end{itemize} 
\item \label{it:checkextmanifold} $v \in [\overline n]$ and either $v< i_1$ or $v>\overline j_2$, where $i_1 :=\min\{i\colon (i,\overline j)\in\Match\}$ and $\overline j_2:=\max\{j\colon (i,\overline j)\}$: here $\Match\setminus (n+1,\overline j^*)\cup (n+1,v)\in\Mext_n$, with $\overline j^*$ as previously defined.
\end{enumerate}
The verification of the remaining cases
\begin{itemize}
\item[2'.] $v\in [n+1]$, $v\neq n+1$ and $v< i$ for some $(i,\overline j)\in\Match$,
\item[3'.] $v \in [\overline n]$ and $i_1 < v < \overline j_2$, where $i_1,\overline j_2$ are as in~\ref{it:checkextmanifold}. above,
\end{itemize}
 is skipped, for these do not involve $n+1$ and thus reduce to the situation of Proposition~\ref{pro:dyckmatchensemble}.
\end{proof}

We omit the proof of the following proposition, that shows that $\dyckptriext{n}$ is a triangulation, because it is analogous to that of Proposition~\ref{pro:dyckmatch}. Indeed, one can see that the perfect matchings contained in every simplex of $\dyckptriext{n}$ 
can be rotated to satisfy conditions~\eqref{eq:incextmatching}, and that there are precisely~$\binom{2n-1}{n-1}$ full-dimensional simplices in $\dyckptriext{n}$, as in every triangulation of $\productp{n}{n-1}$.

\begin{proposition}\label{pro:extdyckmatch}
The extended Dyck path triangulation $\dyckptriext{n}$ is the triangulation corresponding to the matching ensemble $\Mext_n$.
 \end{proposition}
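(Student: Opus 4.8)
\textbf{Proof plan for Proposition~\ref{pro:extdyckmatch}.}

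The plan is to mimic the proof of Proposition~\ref{pro:dyckmatch} closely, since Proposition~\ref{pro:extdyckmatchensemble} already established that $\Mext_n$ is a matching ensemble; hence by Theorem~\ref{thm:SuhoYoo} it is the family of perfect matchings of \emph{some} triangulation $\cT$ of $\productp{n}{n-1}$, and it remains only to identify $\cT$ with $\dyckptriext{n}$. By Lemma~\ref{lem:recovertriangulation}, the maximal simplices of $\cT$ are exactly those spanning trees $s$ of $\bipartitep{n+1}{n}$ such that $s\cup\Match$ contains no alternating cycle for any $\Match\in\Mext_n$; equivalently, such that every perfect sub-matching $\Match_1\subseteq s$ (on some induced subgraph $\bipartitep{I}{J}$) coincides with the unique element of $\Mext_n$ on that subgraph. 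So the strategy is: (i) show every maximal simplex of $\dyckptriext{n}$ lies in $\cT$, and (ii) conclude by a counting argument that there are no others.

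For step (i), I would take a maximal simplex $s$ of $\dyckptriext{n}$ — in the bipartite graph representation, a concatenation of non-crossing weakly increasing spanning trees with an edge between $n+1$ and the last vertex of $[\overline n]$ in each block, possibly after a cyclic shift of the indices $[n]\subset[n+1]$ — and check that each perfect sub-matching $\Match_1\subseteq s$ satisfies the defining conditions~\eqref{eq:incextmatching} up to a cyclic rotation, i.e.\ $\Match_1$ is of the form~\eqref{eq:shiftextmatch}. When $\Match_1$ avoids the vertex $n+1$ of the first factor, this is already contained in the argument of Proposition~\ref{pro:dyckmatch}, since $\Match_1$ then sits inside an ordinary non-crossing weakly increasing block. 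When $n+1\in\Match_1$, one observes that in $s$ the vertex $n+1$ is matched to the top row vertex of its block, which is the largest $\overline j$-index occurring among the edges of that block; restricting to the rows and columns used by $\Match_1$, the edge at $n+1$ still carries the largest column index, so after the appropriate cyclic rotation $\Match_1$ satisfies~\eqref{eq:incextmatching}. Hence $\Match_1\in\Mext_n$, and since $\Mext_n$ satisfies~\ref{it:supports} it is \emph{the} matching on its support, so no alternating cycle can form; by Lemma~\ref{lem:recovertriangulation}, $s\in\cT$.

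For step (ii), I would invoke the standard fact (used already in the proof of Proposition~\ref{pro:dyckmatch}) that every triangulation of $\productp{n}{n-1}$ has the same number of full-dimensional simplices, namely the normalized volume $\binom{2n-1}{n-1}$; once we check that $\dyckptriext{n}$ has exactly this many maximal simplices, the inclusion $\dyckptriext{n}\subseteq\cT$ of step (i) forces equality, so $\cT=\dyckptriext{n}$. The count of maximal simplices of $\dyckptriext{n}$ is the only genuinely new bookkeeping: an extended Dyck path in $\gr{(n+1)}{n}$ decomposes uniquely into its successive ordinary Dyck blocks, and summing over compositions of the blocks, each ordinary Dyck block of a given size contributing the corresponding Catalan number, yields a convolution identity whose total is $\binom{2n-1}{n-1}$; the cyclic orbit then has to be accounted for exactly as in the $\dyckptri{n}$ case, dividing and re-multiplying by the orbit sizes so that the grand total matches. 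The main obstacle is precisely making this Catalan/Fuss-style convolution count rigorous together with the orbit bookkeeping — ensuring that distinct (extended Dyck path, cyclic shift) pairs give distinct simplices except for the expected coincidences — rather than anything in step (i), which is a routine adaptation of Proposition~\ref{pro:dyckmatch}.
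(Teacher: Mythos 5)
Your plan is essentially the paper's own (omitted) argument: the authors likewise note that $\Mext_n$ is a matching ensemble by Proposition~\ref{pro:extdyckmatchensemble}, that every perfect matching contained in a simplex of $\dyckptriext{n}$ can be rotated to satisfy~\eqref{eq:incextmatching}, and that the count of maximal simplices equals the normalized volume $\binom{2n-1}{n-1}$, so the inclusion forces equality. One small correction to your step (ii): the orbit bookkeeping is \emph{not} ``exactly as in the $\dyckptri{n}$ case'' --- there the $\catnum{n-1}$ Dyck paths generate free orbits of size $n$ with no coincidences, whereas here the $\catnum{n}$ extended Dyck paths generate orbits that may have size less than $n$ and distinct extended Dyck paths may lie in a common orbit (already for $n=3$ one has $5$ paths but only $4$ orbits, of sizes $3,3,3,1$, totalling $10=\binom{5}{2}$), so the count must be organized by distinct simplices rather than by (path, shift) pairs.
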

 
 We conclude this section with the promised proof of Theorem~\ref{theorem:non-extend}.
 
 \begin{proof}[Proof of Theorem~\ref{theorem:non-extend}]
The triangulation $\bdydyckptriflip{n}$ of $\semibdy{n}{n-1}$ produces a collection of perfect matchings on all induced subgraphs of $\bipartitep{n+1}{n}$, that we refer to as $\cM'$ (for which the reader can check that the axioms~\ref{it:supports} and~\ref{it:closed} hold). Observe that, by construction, $\Mext_n$ and $\cM'$ agree on all the induced subgraphs $K_{I,[\overline n]}$, where $n\notin I\subset[n+1]$. In contrast, the triangulation $\dyckptriflip{n}$ of $\productn$ contributes the following matching on the induced subgraph $K_{[n],[\overline n]}$:
\[
\Match:=\{(1,\overline 2),(2,\overline 3),(3,\overline 4),\ldots, (n-1,\overline n),(n,\overline 1)\}.
\]

Suppose, for the sake of absurdity, that axiom~\ref{it:manifold} holds for $\Match \in \cM'$. Then, there is a unique perfect matching $\Match'\in\cM'$ on $K_{[n]\setminus w \cup n+1, [\overline n]}$ that differs from $\Match$ in a single edge. However, letting $w=n\in[n]$ (which we may by symmetry), we see that the unique perfect matching on $K_{[n]\setminus w\cup n+1,[\overline n]}$ in the matching ensemble $\Mext_n$ is
\[
\{(1,\overline 1),(2,\overline 2),(3,\overline 3),\ldots, (n-1,\overline{n-1}), (n+1,\overline n)\},
\]
so $\Match$ cannot satisfy axiom~\ref{it:manifold}, $\cM'$ is not a matching ensemble and $\bdydyckptriflip{n}$ cannot be extended to a triangulation of $\productp{n}{n-1}$.
\end{proof}


\section{Proof of regularity}\label{sec:regularity}

We have already seen that $\dyckptri{n}$ and $\dyckptriext{n}$ are triangulations of $\productn$ and $\productp{n}{n-1}$, respectively. In this section we prove that they are also regular. We refer to \cite{DeLoeraRambauSantos} for the definitions of regular triangulation, height function, pushing triangulation, etc.

\begin{proposition}\label{prop:pushingdyck}
 The Dyck path triangulation $\dyckptri{n}$ of $\productp{n-1}{n-1}$ coincides with its pushing triangulation 
with respect to any order of the boxes in the grid that extends the partial order:
\[
 (i,\overline j)<(i',\overline j') \quad  \Leftrightarrow \quad  j-i\pmod n <  j'-i' \pmod n, 
\] 
where $ j-i \pmod n$ and $ j'-i' \pmod n$ are taken in $[n]$. 

Hence, the triangulation $\dyckptri{n}$ is regular, and can be obtained by the height function $h\colon \productn\to \RR$ that assigns to the point $(\bfe_i,\bfe_{j})$ the height $h_{ij}=c^{j-i \pmod n}$, for some real number $c>1$ sufficiently large.
\end{proposition}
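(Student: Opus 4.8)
The plan is to establish the two claims in Proposition~\ref{prop:pushingdyck} in sequence: first that $\dyckptri{n}$ equals the pushing triangulation with respect to the stated order, and then deduce regularity via the explicit height function. Recall that a pushing triangulation with respect to a vertex order $v_1,v_2,\ldots$ is built inductively by ``pushing'' the points in order: a full-dimensional simplex lies in the pushing triangulation if and only if it has the form $\conv(v_i, F)$ where $v_i$ is the vertex of smallest index among those used, and $F$ is a face of the pushing triangulation of the points processed before $v_i$ that is ``visible'' from $v_i$ (cf.\ \cite[Lemma 4.3.2]{DeLoeraRambauSantos}). Since I already know from Propositions~\ref{pro:dyckmatchensemble} and~\ref{pro:dyckmatch} that $\dyckptri{n}$ \emph{is} a triangulation, and from Lemma~\ref{lem:semiskel-unicity} (or directly) that there is a unique pushing triangulation for the given order, it suffices to verify that the maximal simplices of $\dyckptri{n}$ satisfy the pushing recursion. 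Equivalently, and more cleanly, I would exhibit the explicit height function $h_{ij} = c^{\,j - i \bmod n}$ and check that the lower envelope of the lifted point configuration $\{(\bfe_i,\bfe_j,h_{ij})\}$ projects exactly to $\dyckptri{n}$; this simultaneously proves regularity and that the triangulation is the pushing one (a regular triangulation obtained from a height function that is generic and monotone along the order $\prec$ is the pushing triangulation for that order, by \cite[Section 9.3]{DeLoeraRambauSantos}).

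For the height-function verification, the key computation is to show that for each maximal simplex $s \in \dyckptri{n}$ there is an affine functional that agrees with $h$ on the vertices of $s$ and lies strictly below $h$ on all other vertices $(\bfe_i,\bfe_j)$ of $\productn$. I would first handle the ``base'' Dyck-path simplices (those that are genuine Dyck paths in $\gr{n}{n}$, i.e.\ all squares satisfy $i \le \overline j$, so $j - i \bmod n = j-i \in \{0,1,\ldots,n-1\}$), and then transport the argument to the whole cyclic orbit using that the cyclic shift $(i,\overline j)\mapsto(i+1,\overline{j+1})$ preserves both $\dyckptri{n}$ (by construction) and the quantity $j - i \bmod n$, hence preserves $h$ up to the obvious relabeling — so it is enough to certify the simplices in a single ``fundamental domain'' and conjugate. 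For the base case, an affine functional on $\RR^m \times \RR^n$ restricted to the vertex set has the form $a_i + b_j$ for suitable reals; I must solve $a_i + b_j = c^{\,j-i}$ along the chosen Dyck path (a connected spanning tree in the bipartite picture, so the system determines $a_i, b_j$ up to a global additive constant) and then check the strict inequality $a_i + b_j < c^{\,j - i \bmod n}$ off the path. This inequality should follow from convexity of $t \mapsto c^t$ together with $c$ being large: the gaps $c^{t} - c^{t-1}$ grow geometrically, which forces the piecewise-linear interpolant along the monotone path to dominate the linear pieces spanning any ``off-diagonal'' square. I expect to phrase this via a telescoping/induction along rows and columns of the grid.

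The main obstacle will be the off-path strict-inequality check, and in particular making the ``$c$ sufficiently large'' argument uniform over all the (exponentially many) simplices and over the whole cyclic orbit. The cleanest route is probably to avoid solving the linear systems explicitly: instead, observe that the function $(i,j) \mapsto c^{\,j-i \bmod n}$, when restricted to any Dyck path, is the restriction of a \emph{convex} function of the single variable $j - i$ (no wraparound occurs on a genuine Dyck path), so the path's lifted points are in convex position in a $2$-dimensional sense, and the supporting affine functional is forced to be the chord; then a monotonicity/convexity estimate shows every square strictly off the path lifts strictly above that chord once the convexity of $t\mapsto c^t$ is pronounced enough, i.e.\ once $c$ exceeds some explicit bound depending only on $n$ (e.g.\ $c > $ something like $2^{n}$ suffices, with room to spare). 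For the orbit simplices I would not re-derive anything: the linear automorphism of $\RR^m\times\RR^n$ permuting coordinates cyclically carries $h$ to a height function differing from $h$ only by adding a linear functional (because $j-i \bmod n$ shifts by a constant, and $c^{t+1} = c\cdot c^t$ is \emph{not} linear — so here I must be slightly careful: the correct statement is that the cyclic shift permutes the cells of the GKZ coarse subdivision from the Remark, and within each such cell $h$ is, up to the cell's own affine functional, the same convex local picture, so genericity and the pushing/lower-envelope description are preserved cell by cell). Finishing the proof then amounts to assembling these local certificates into a global one, which is routine once the single-cell estimate is in hand.
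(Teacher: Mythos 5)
Your overall strategy (exhibit the height function and verify the lower envelope directly, simplex by simplex) is legitimate and different from the paper's: the paper never certifies individual simplices. Instead it proves regularity of the \emph{extended} triangulation $\dyckptriext{n}$ via Lemma~\ref{lem:regularmatching}, which reduces regularity to showing that on every induced subgraph $\bipartitep{I}{J}$ the weight-minimizing perfect matching is the one in $\Mext_n$; this is done by an induction on $|I|$ showing the minimizer is non-crossing, and Proposition~\ref{prop:pushingdyck} then follows by restriction to a facet. Your per-simplex route would also work in principle, but as written it has a genuine gap precisely at the step you yourself flag as the main obstacle. The affine functional $a_i+b_j$ interpolating $h$ along a Dyck path is \emph{not} a function of $j-i$, so it is not ``the chord'' of the convex function $t\mapsto c^t$: solving the tree equations along a path with steps at varying heights gives increments $a_{i+1}-a_i$ and $b_{j+1}-b_j$ that depend on where the step occurs (e.g.\ for the path $(1,\overline1),(1,\overline2),(1,\overline3),(2,\overline3),(3,\overline3)$ one gets $b$-increments $c-1$ and $c^2-c$, which differ). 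Consequently the value of the functional at an off-path square is an alternating sum of powers of $c$ determined by the unique cycle that square closes with the path, and the required strict inequality is exactly the matching-weight comparison of Lemma~\ref{lem:regularmatching}; it does not follow from one-variable convexity and must be argued (e.g.\ by the ``largest exponent dominates for $c$ large'' comparison of non-crossing versus crossing matchings, which is what the paper's induction supplies). Until that inequality is proved, the proposal establishes nothing.

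Two smaller points. First, your handling of the cyclic orbit is muddled but in the harmless direction: under the simultaneous shift $(i,\overline j)\mapsto(i+1,\overline{j+1})$ the residue $j-i\pmod n$ is \emph{invariant}, not shifted by a constant, so $h$ is literally preserved and the lower envelope is automatically invariant under the action; no discussion of GKZ cells or affine corrections is needed. Second, the claim that any height function ``generic and monotone along the order $\prec$'' yields the pushing triangulation is not a correct characterization of pushing (lexicographic) triangulations; what one needs is that consecutive heights are separated super-geometrically (each dominating all alternating sums formed from the earlier ones), which $c^{\,j-i\bmod n}$ for $c$ large does satisfy but which must be invoked correctly to get the first assertion of the proposition rather than just regularity.
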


We omit the proof of Proposition~\ref{prop:pushingdyck}, because it is a direct consequence of Proposition~\ref{prop:extendeddyckisregular} below.

\begin{proposition}\label{prop:extendeddyckisregular}
 The extended Dyck path triangulation $\dyckptriext{n}$ of $\productp{n}{n-1}$ is regular, obtained by assigning the height $h_{ij}$ to the point $(\bfe_i,\bfe_{j})$, defined by:
 \begin{equation}\label{eq:heights}
 h_{ij}=
  \begin{cases}
	  c^{j-i}& \text{ if } j\geq i \\
	  c^{n+j-i}&\text{ if } j <i <n+1\\
	  1&\text{ if } i=n+1;
  \end{cases}
 \end{equation}
where $c>1$ is a large enough real number.
\end{proposition}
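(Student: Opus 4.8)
The plan is to verify that the height function $h$ defined by~\eqref{eq:heights} is \emph{generic} (so that it induces a triangulation rather than a coarser subdivision) and that the triangulation it induces is exactly $\dyckptriext{n}$. Since $\dyckptriext{n}$ was already shown to be a triangulation in Proposition~\ref{pro:extdyckmatch}, it suffices to prove that every maximal simplex of $\dyckptriext{n}$ is a lower face of the lifted point configuration $\{(\bfe_i,\bfe_j,h_{ij})\}$; a collection of full-dimensional simplices that forms a triangulation and all of whose members are lower faces must constitute the entire lower hull, hence the regular triangulation induced by $h$. So the core of the argument is: \emph{for each extended Dyck path $P$ (and each of its cyclic shifts), exhibit an affine functional $\varphi_P$ on $\RR^{m+n}$ such that $\varphi_P(\bfe_i,\bfe_j)\leq h_{ij}$ for all boxes $(i,\overline j)$, with equality precisely on the boxes of $P$.}

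First I would reduce to the case of a genuine extended Dyck path $P$ (not a shift), since the height function has the symmetry $h_{i+1,j+1}=c^{-1}h_{ij}\cdot c = h_{ij}$ up to the cyclic wraparound built into the exponent $j-i\pmod n$; more precisely, the cyclic action on boxes permutes the heights in a controlled way, so an affine certificate for $P$ transports to a certificate for each shift of $P$. The functional on a product of simplices decomposes as $\varphi_P(\bfe_i,\bfe_j)=a_i+b_j$ for suitable reals $a_1,\dots,a_{n+1}$ and $b_1,\dots,b_n$; the requirement is $a_i+b_j=h_{ij}$ for $(i,\overline j)\in P$ and $a_i+b_j< h_{ij}$ otherwise. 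Along a single Dyck-path strand, consecutive boxes differ by a unit step right or up, and $h_{ij}=c^{j-i}$ there, so the defining equations $a_i+b_j=c^{j-i}$ along the strand pin down the increments $a_{i+1}-a_i$ and $b_{j+1}-b_j$ recursively; the extra box in column $n+1$ at the last row contributes $a_{n+1}+b_{\text{last}}=1$, consistent because $h_{n+1,\cdot}=1$. I would then check the strict inequality for off-path boxes: this is where the choice $c>1$ "large enough" is used, exactly as in the staircase-triangulation argument — the dominant term in each $h_{ij}$ is a power of $c$, and being off the path forces a strictly smaller exponent to appear on the left-hand side $a_i+b_j$, so for $c$ large the inequality holds with room to spare.

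The main obstacle I anticipate is bookkeeping the inequalities across the \emph{concatenation points} of the extended Dyck path, where one Dyck strand ends (at a box in column $n+1$, last row) and the next begins (back near the bottom-left): here the exponent $j-i$ in $h_{ij}$ jumps, and one must confirm that the recursively-determined $a_i, b_j$ still satisfy $a_i+b_j<h_{ij}$ for every box lying "between" strands in the grid, including boxes below the diagonal where $h_{ij}=c^{n+j-i}$. I would handle this by a uniform estimate: writing each $h_{ij}$ as $c^{e(i,j)}$ with $e(i,j)\in\{0,1,\dots,n-1\}$ the appropriate exponent, show that for an off-path box the quantity $a_i+b_j$ is a fixed polynomial (independent of $c$) in lower powers $c^{<e(i,j)}$, so that $h_{ij}-(a_i+b_j)\to+\infty$ as $c\to\infty$; taking $c$ larger than the finitely many thresholds coming from all boxes and all path-orbit representatives finishes it. Finally, Proposition~\ref{prop:pushingdyck} follows immediately by deleting the $(n+1)$-st vertex of the first factor: restricting the height function to the boxes with $i\leq n$ gives $h_{ij}=c^{j-i\bmod n}$, and the induced regular triangulation of $\productn$ is the corresponding restriction of $\dyckptriext{n}$, which is $\dyckptri{n}$.
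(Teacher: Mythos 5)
Your route is genuinely different from the paper's. You use the primal definition of regularity: for each maximal simplex (an extended Dyck path together with its cyclic shifts) you solve $a_i+b_j=h_{ij}$ along the corresponding spanning tree and then must check $a_i+b_j<h_{ij}$ for every off-path box. The paper instead invokes Lemma~\ref{lem:regularmatching}, which reduces regularity to a statement about circuits: on each induced subgraph $\bipartitep{I}{J}$, the perfect matching of minimal total height must be the one in $\Mext_n$. That is proved by shifting so that $i_k\leq \overline j_k$, an induction showing the minimizer is non-crossing, and a single exchange comparison where ``$c$ large'' enters via comparing maxima of heights. The payoff of the paper's method is that every comparison happens between two matchings with the same support, so the dominant-power bookkeeping is local and short; your method must handle exponentially many paths and, per path, all off-path boxes including those straddling concatenation points. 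Your reduction to un-shifted paths via the invariance of $h$ under the cyclic action is correct, and your derivation of Proposition~\ref{prop:pushingdyck} by restriction is exactly what the paper does.

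There is, however, a concrete flaw in your key estimate. The claim that for an off-path box the value $a_i+b_j$ involves only powers of $c$ strictly below $e(i,j)$ is false. Take $n=2$ and the extended Dyck path $\{(1,\overline 1),(1,\overline 2),(2,\overline 2),(3,\overline 2)\}$: normalizing $a_1=0$ forces $b_1=1$, $b_2=c$, $a_2=a_3=1-c$, and for the off-path box $(3,\overline 1)$ one gets $a_3+b_1=2-c$, which contains $c^{1}$ even though $h_{31}=c^{0}=1$. The desired inequality $2-c<1$ holds only because the coefficient of the top power of $c$ in $a_3+b_1$ is negative, not because the powers are small. So the mechanism you propose must be replaced by a sign analysis: one has to show that the leading term of $h_{ij}-(a_i+b_j)$ always has positive coefficient, which requires tracking signs through the telescoping solution of the linear system along the path (and across strand concatenations), not merely exponents. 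This is repairable, but it is precisely the bookkeeping that the matching-ensemble criterion of Lemma~\ref{lem:regularmatching} is designed to avoid.
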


To prove this, we use the following result. It is a direct consequence of \cite[Theorem 2.3.20 and Lemma 2.4.2]{DeLoeraRambauSantos}, restricted to the special case of the product of two simplices and expressed in terms of perfect matchings.
\begin{lemma}\label{lem:regularmatching}
Let $\cT$ be the regular subdivision of $\product$ induced by the height function that maps $(\bfe_i,\bfe_{j})$ onto $h_{ij}$.
Then $\cT$ is a triangulation with matching ensemble $\cM$ if and only if for any perfect matching $\Match\in\cM$ on $\bipartitep{I}{J}$ it holds
\begin{equation}
\sum_{(i,\ol j)\in\Match} h_{ij}<\sum_{(i,\ol j)\in\Match'} h_{ij},
 \label{equation:regularmatching}
\end{equation} 
whenever $\Match'\neq\Match$ is a perfect matching on $\bipartitep{I}{J}$.
\end{lemma}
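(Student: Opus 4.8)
The plan is to derive this lemma by specializing the standard theory of regular subdivisions, namely \cite[Theorem~2.3.20 and Lemma~2.4.2]{DeLoeraRambauSantos}, to the polytope $\product$ and rephrasing it in the bipartite‑graph and matching language of Lemma~\ref{lem:bipartite}. Throughout, $\cM$ will be a matching ensemble and $\cT_\cM$ its triangulation (Theorem~\ref{thm:SuhoYoo}), so that the assertion ``$\cT$ is a triangulation with matching ensemble $\cM$'', where $\cT$ is the regular subdivision induced by $h$, is just the statement $\cT=\cT_\cM$; and for a vertex $v=(\bfe_i,\bfe_j)$ of $\product$ I write $\widehat v=(\bfe_i,\bfe_j,h_{ij})$ for its lift. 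Two facts from \cite{DeLoeraRambauSantos} are what I would use, each translated via Lemma~\ref{lem:bipartite}: \textbf{(I)} a spanning tree $s$ of $\bipartite$ is a maximal (hence simplicial) cell of $\cT$ if and only if, for every edge $w=(i,\ol j)\notin s$, the lift $\widehat w$ lies strictly above the affine hyperplane spanned by $\{\widehat v:v\in s\}$; and \textbf{(II)} for a circuit $Z$ of $\product$ with sides $Z^+$ and $Z^-$, the regular subdivision of the point set $Z$ induced by $h$ has $Z^-$ as a common face of all its maximal simplices precisely when $\sum_{(i,\ol j)\in Z^-}h_{ij}<\sum_{(i,\ol j)\in Z^+}h_{ij}$, and leaves $\conv(Z)$ unsubdivided precisely when equality holds. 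By Lemma~\ref{lem:bipartite} a circuit of $\product$ is a cycle of $\bipartite$ whose affine dependence has all coefficients $\pm1$, and $Z^\pm$ are the two alternating perfect matchings of that cycle, of equal size; in particular, the fundamental circuit $C$ of $w$ with respect to $s$ — the path in $s$ joining $i$ and $\ol j$, together with $w$ — has a side $C^-\subseteq s$ not containing $w$ and a side $C^+\ni w$, and substituting the $\pm1$ dependence of $C$ into the affine combination expressing $w$ over $s$ turns the condition in (I) into exactly $\sum_{(i,\ol j)\in C^-}h_{ij}<\sum_{(i,\ol j)\in C^+}h_{ij}$.

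For the ``if'' direction I would assume~\eqref{equation:regularmatching} and fix a maximal simplex $s$ of $\cT_\cM$ together with an edge $w\notin s$. The side $C^-$ of the fundamental circuit of $w$ is a perfect matching on some $\bipartitep{I}{J}$ and is contained in $s$, hence $C^-\in\cM$; applying~\eqref{equation:regularmatching} with $\Match=C^-$ and $\Match'=C^+$ gives $\sum_{(i,\ol j)\in C^-}h_{ij}<\sum_{(i,\ol j)\in C^+}h_{ij}$, which by (I) says that $\widehat w$ lies strictly above the affine span of $\widehat s$. Ranging over all non‑tree edges $w$, this certifies that $\widehat s$ is a strictly supported lower facet of the lifted polytope, i.e. that $s$ is a maximal cell of $\cT$. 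Since the maximal simplices of $\cT_\cM$ already tile $\product$ and each is a maximal cell of $\cT$, they must be all the maximal cells of $\cT$; hence $\cT=\cT_\cM$, a triangulation with matching ensemble $\cM$.

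For the ``only if'' direction I would assume $\cT=\cT_\cM$, take $\Match\in\cM$ on $\bipartitep{I}{J}$ and a distinct perfect matching $\Match'$ on the same graph, and decompose $\Match\triangle\Match'$ into vertex‑disjoint cycles $Z_1,\dots,Z_r$, so that the two alternating matchings of each $Z_t$ are $Z_t\cap\Match$ and $Z_t\cap\Match'$. Since $\Match$ lies inside a simplex of $\cT$, so does $Z_t\cap\Match$, hence $\conv(Z_t\cap\Match)$ is a face of $\cT$; restricting to the vertices of $Z_t$ a linear functional that witnesses $\conv(Z_t\cap\Match)$ as a lower face of the lifted polytope exhibits $Z_t\cap\Match$ as a face of the regular subdivision of the point set $Z_t$, which by (II) forces that subdivision to be the one with $Z_t\cap\Match$ as a common face and therefore $\sum_{Z_t\cap\Match}h_{ij}<\sum_{Z_t\cap\Match'}h_{ij}$. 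Adding these inequalities over $t$, and the common weight $\sum_{\Match\cap\Match'}h_{ij}$ to both sides, yields~\eqref{equation:regularmatching}. The part I expect to require the most care is the sign bookkeeping behind (I) and (II): deciding, from the sign of $\sum_{Z^-}h_{ij}-\sum_{Z^+}h_{ij}$, which of the two alternating matchings of a circuit the regular subdivision selects as a common face — this is where the ``all coefficients $\pm1$, two matchings of equal size'' structure of Lemma~\ref{lem:bipartite} is essential, since it makes the relevant barycentric weights uniform. A secondary point is verifying that a lower functional exhibiting $Z_t\cap\Match$ as a face of $\cT$, once restricted to the vertices of $Z_t$, still attains its minimum exactly on $Z_t\cap\Match$; this holds because the remaining lifted vertices of $\product$ stay strictly above the supporting hyperplane, but it needs to be said.
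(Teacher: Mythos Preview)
Your proposal is correct and follows precisely the route the paper itself indicates: the paper does not give a proof of this lemma but states that it ``is a direct consequence of \cite[Theorem~2.3.20 and Lemma~2.4.2]{DeLoeraRambauSantos}, restricted to the special case of the product of two simplices and expressed in terms of perfect matchings,'' and your argument is exactly that specialization carried out in detail via Lemma~\ref{lem:bipartite}. The sign bookkeeping in your step~(I) is right (the condition $\sum_{C^-}h<\sum_{C^+}h$ rearranges to $h_w>\sum_{C^-}h-\sum_{C^+\setminus\{w\}}h$, which is the ``$\widehat w$ lies above'' statement), and your restriction argument in the ``only if'' direction is the standard one for passing a lower face to a subconfiguration.
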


\begin{proof}[Proof of Proposition~\ref{prop:extendeddyckisregular}]
 Fix $I=\{i_1<\cdots<i_s\}\subseteq [n+1]$ and $\ol J=\{\ol j_1<\cdots<\ol j_s\}\subseteq [\ol n]$, and let $\Match$ be the perfect matching in $\bipartitep{I}{J}$ that minimizes $\omega(\Match)$, where we abbreviate $\omega(\Match):=\sum_{(i,\ol j)\in\Match} h_{ij}$. We claim that $\Match\in\Mext_n$.
 
Using the symmetry in the definition of $h_{ij}$ we observe that $\omega(\Match)=\omega(\Match')$ whenever $\Match'$ is obtained from $\Match$ by changing every $(i,\ol j)\in\Match$ by $\big(\rho_\ell(i),\ \overline j+\ell\pmod n\big)$. Therefore, without loss of generality we can shift $I$ and $\ol J$ and always assume that $i_k\leq \ol j_k$ for all $k$ with $i_k\neq n+1$ (compare the proof of Proposition~\ref{pro:extdyckmatchensemble}).
 
Therefore we only need to show that $\Match$ is non-crossing. In our setting, $\Match$ has a crossing if and only if it contains an edge $(i_k,\ol j_\ell)\in \Match$ with $k>\ell$. The proof is by induction on $s=|I|=|\ol J|$ and if $s=1$ then it is trivially true. For $s>1$, let $\ell$ be such that $(i_s,\ol j_\ell)\in \Match$. Then $\Match$ induces a submatching in $I\setminus i_s, \ol J\setminus \ol j_\ell$ that still fulfills $i_k\leq \ol j_k$. By induction hypothesis this submatching must be non-crossing. Hence $\Match$ must be of the form $\Match=\bigcup_{1\leq k<\ell} (i_{k},\ol j_{k})\cup \bigcup_{\ell\leq k<s} (i_{k},\ol j_{k+1})\cup (i_s, \ol j_\ell)$. If $\ell=s$ then $\Match$ is non-crossing as desired.

On the contrary, if $\ell\neq s$, define $\Match':=\bigcup_{1\leq k\leq s} (i_{k},\ol j_{k})$. We claim that for every $k$ there is an edge $(i,\overline j)\in \Match$ such that $h_{i_{k}j_{k}}\leq h_{ij}$ (and strict inequality for at least one $k$). Indeed, 
\begin{itemize}
 \item for $1\leq k<\ell$, there is nothing to prove because $\Match$ and $\Match'$ coincide;
 \item if $\ell\leq k<s$ then $h_{i_{k} j_{k}}<h_{i_{k} j_{k+1}}$ because $i_{k}\leq j_{k}< j_{k+1}$;
 \item for $k=s$, if $i_s\neq n+1$ then $h_{i_{s} j_{s}}< h_{i_{s-1} j_{s}}$ because $i_{s-1}<i_{s}\leq \ol j_{s}$;
 \item finally, if $i_s= n+1$, then $h_{i_s j_{s}}= h_{i_{s} j_{\ell}}$ by definition. 
\end{itemize}
To conclude the proof we just need to observe that when $c$ is large enough then $\max_{(i,\ol j)\in\Match'} h_{i j}<\max_{(i,\ol j)\in\Match} h_{i j}$ implies that $\omega(\Match')<\omega(\Match)$, which contradicts the assumption of $\Match$ being minimal.
 
 \end{proof}


\section{Generalized Dyck path triangulations}\label{sec:generalization}

In this section we show how Dyck path triangulations, and their extended
versions, have a natural generalization to triangulations $\ratdyckptri{rn}{n}$
of $\productp{rn-1}{n-1}$ for any positive integer $r$. This shows an interesting connection to rational Catalan combinatorics, which is an active area of recent interest, see for example~\cite{armstrong_results_2013,ArmstrongRhoadesWilliams2013}.

\begin{figure}[htbp]
\centering
  \begin{subfigure}{\textwidth} \centering
  \includegraphics[width=.6\textwidth]{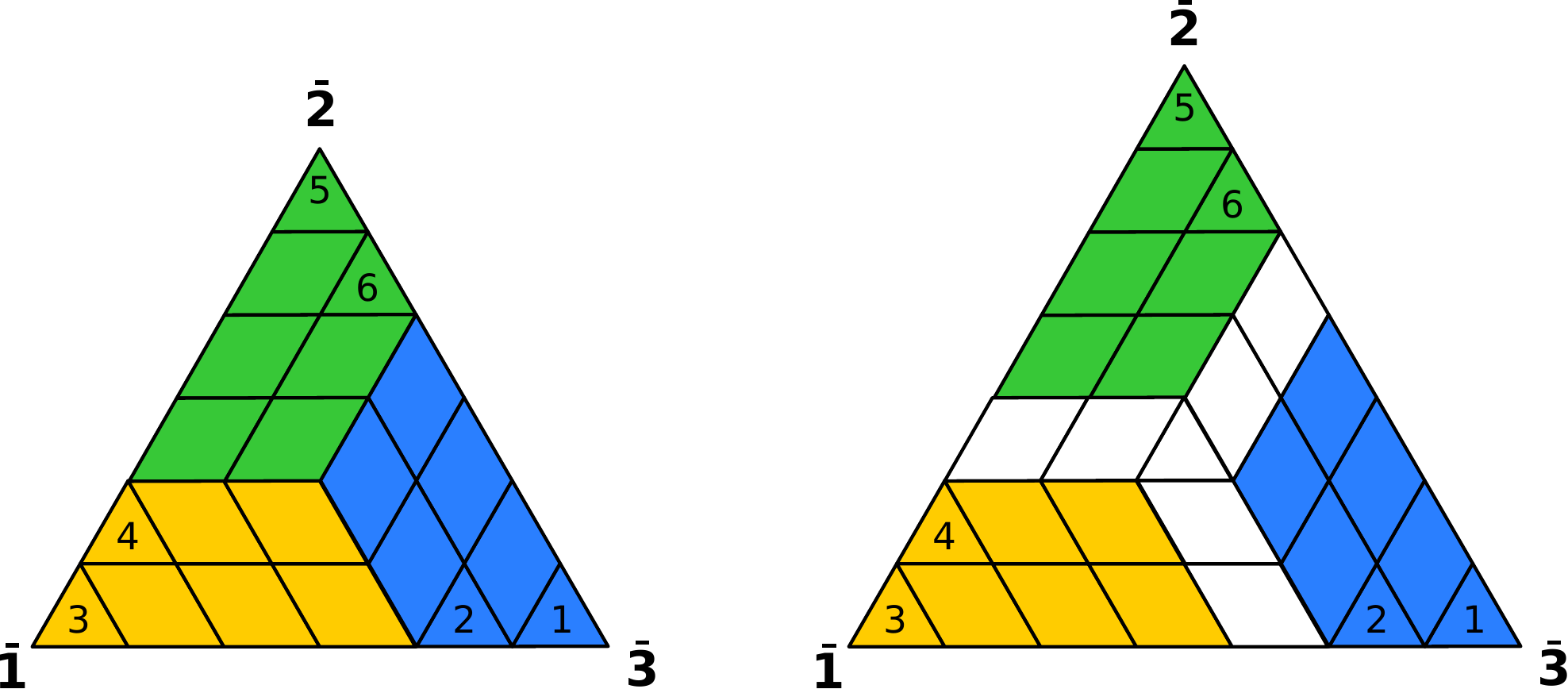}  
  \caption{Mixed subdivision representation of $\ratdyckptri{6}{3}$ and 
  $\ratdyckptriext{6}{3}$.}
  \label{fig:rationaldyckexamplesums} 
  \end{subfigure} 

  \begin{subfigure}{\textwidth} \centering
  \includegraphics[width=.9\textwidth]{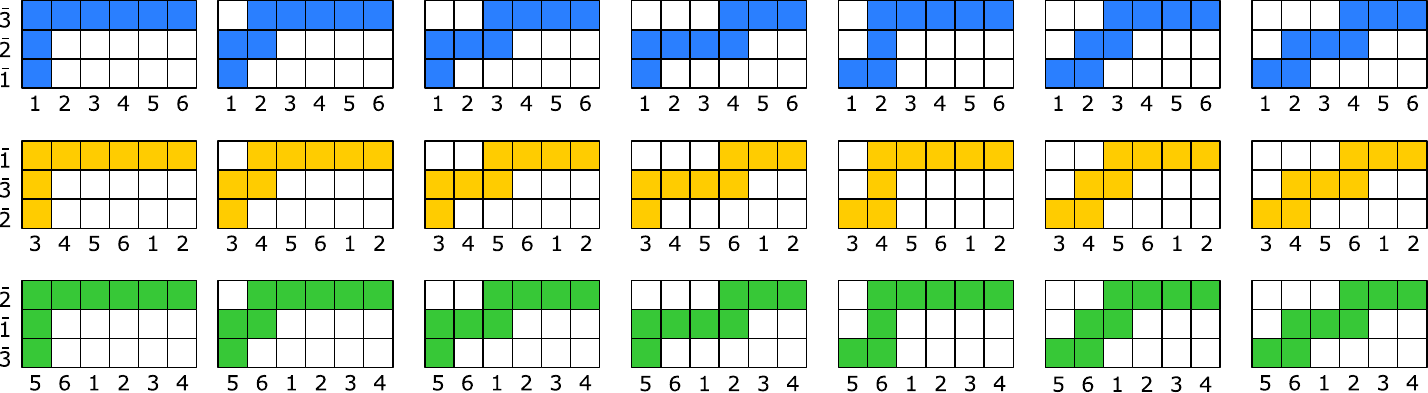}  
  \caption{Grid representation of $\ratdyckptri{6}{3}$.}
  \label{fig:rationaldyckexamplegrids} 
  \end{subfigure} 
  \caption{The triangulation $\ratdyckptri{6}{3}$ and its extension
$\ratdyckptriext{6}{3}$.}
  \label{fig:rationaldyckexample} 
\end{figure}

The Dyck path triangulation $\ratdyckptri{n}{n}=\dyckptri{n}$ exploits the
identity 
\[n\cdot \catnum{n-1}=\binom{2n-2}{n-1},\]
where $\catnum{n-1}$ is the $(n-1)$th Catalan number. Indeed, there are
$\catnum{n-1}$ Dyck paths from $(1,\overline 1)$ to $(n,\overline n)$, each of which represents a
simplex of (normalized) volume $1$ in $\productn$, and every such a simplex generates an orbit of $n$ simplices. Thus, altogether the orbits yield the correct (normalized) volume of $\productn$, equal to $\binom{2n-2}{n-1}$.

The triangulation $\ratdyckptri{rn}{n}$ of $\productp{rn-1}{n-1}$ analogously exploits the identity 
\[n\cdot \ratcatnum{n}{rn-1}=\binom{(r+1)n-2}{n-1},\]
where $\ratcatnum{a}{b}=\frac{1}{a+b}\binom{a+b}{a}$, for $a$ and $b$ relatively prime, are known as the \defn{rational Catalan
numbers}.

Define a \defn{$({rn,n})$-Dyck path}\footnote{The standard definition of a
rational $(a,b)$-Dyck path is
slightly different: it uses a grid from $(0,0)$ to $(a,b)$ and imposes that
$i<rj$ for any $i\neq 0,b$. It is used, for example,
in~\cite{ArmstrongRhoadesWilliams2013}.} in the grid $\gr{rn}{n}$ as a monotonically increasing path from~$(1,\overline 1)$ to~$(rn,\overline n)$ such that every step 
$(i,\overline j)$ satisfies~$i\leq rj$. There are exactly $\ratcatnum{n}{rn-1}$ such
paths. The \defn{$({rn,n})$-Dyck path triangulation} $\ratdyckptri{rn}{n}$ is the
triangulation of $\productp{rn-1}{n-1}$ that has as maximal simplices the
$({rn,n})$-Dyck paths together with their orbit under the action that \linebreak maps
$(i,\overline j)\mapsto (i+r \pmod{rn},\overline j+1\pmod n)$. We show an example in
Figure~\ref{fig:rationaldyckexample}.

\begin{theorem}
 The $({rn,n})$-Dyck path triangulation $\ratdyckptri{rn}{n}$ is a triangulation
of $\productp{rn-1}{n-1}$. Moreover, it is regular.
\end{theorem}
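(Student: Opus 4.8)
The plan is to follow verbatim the strategy used for $\dyckptri{n}$ and $\dyckptriext{n}$ in Sections~\ref{sec:matchings} and~\ref{sec:regularity}: exhibit a family of perfect matchings on the induced subgraphs of $\bipartitep{rn}{n}$, prove it is a matching ensemble (Definition~\ref{def:ensemble}), apply Theorem~\ref{thm:SuhoYoo} to obtain a triangulation of $\productp{rn-1}{n-1}$, identify that triangulation with $\ratdyckptri{rn}{n}$, and finally certify regularity by producing an explicit height function and invoking Lemma~\ref{lem:regularmatching}.

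For the first part, given $I\subseteq[rn]$ and $\overline J\subseteq[\overline n]$ with $|I|=|\overline J|$, call a matching $\Match$ on $\bipartitep{I}{J}$ \emph{rationally non-crossing and weakly increasing} if $i<i'\Rightarrow\overline j<\overline j'$ for all $(i,\overline j),(i',\overline j')\in\Match$ and $i\leq rj$ for every $(i,\overline j)\in\Match$; then $\cM_{(rn,n)}$ is the set of all matchings obtained from these by the cyclic shifts $(i,\overline j)\mapsto(i+r\ell\pmod{rn},\ \overline j+\ell\pmod n)$, $\ell\in[n]$. The supports axiom~\ref{it:supports} would be proved as in Proposition~\ref{pro:dyckmatchensemble}: interleave $[rn]$ and $[\overline n]$ into the cyclic word $1,\dots,r,\overline 1,r+1,\dots,2r,\overline 2,\dots,(n-1)r+1,\dots,rn,\overline n$ (in which $i$ precedes $\overline j$ exactly when $i\leq rj$), read off the cyclic subword on $I\cup\overline J$, rotate it so that every final segment contains at least as many letters from $\overline J$ as from $I$, and match the $k$-th letter from $I$ with the $k$-th letter from $\overline J$. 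The closure axiom~\ref{it:closed} is immediate, and the linkage axiom~\ref{it:manifold} is handled by the same local-exchange-or-rotate case analysis as in Propositions~\ref{pro:dyckmatchensemble} and~\ref{pro:extdyckmatchensemble}. Theorem~\ref{thm:SuhoYoo} then yields a triangulation $\cT$ with matching ensemble $\cM_{(rn,n)}$; mimicking Proposition~\ref{pro:dyckmatch}, every perfect submatching of a maximal simplex of $\ratdyckptri{rn}{n}$ (a $(rn,n)$-Dyck path, or a cyclic image thereof) lies in $\cM_{(rn,n)}$, so by Lemma~\ref{lem:recovertriangulation} each such simplex belongs to $\cT$; and since the $(rn,n)$-Dyck paths, each a full-dimensional simplex, together with their cyclic orbits of size $n$, account for $n\cdot\ratcatnum{n}{rn-1}=\binom{(r+1)n-2}{n-1}=\vol(\productp{rn-1}{n-1})$ maximal simplices — the number in any triangulation of $\productp{rn-1}{n-1}$ — we conclude $\cT=\ratdyckptri{rn}{n}$.

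For regularity I would apply Lemma~\ref{lem:regularmatching} with the height $h_{ij}=c^{\lambda(i,\overline j)}$, where $\lambda(i,\overline j)$ is the residue of $rj-i$ modulo $rn$ taken in $\{0,1,\dots,rn-1\}$ — the quantity invariant under $(i,\overline j)\mapsto(i+r,\overline j+1)$, specializing to the exponent $j-i\pmod n$ of Proposition~\ref{prop:pushingdyck} when $r=1$ — and $c>1$ large. Since $h$ is constant on shift-orbits, after rotating $I$ and $\overline J$ it suffices to show that the rationally non-crossing and weakly increasing matching on $\bipartitep{I}{J}$ minimizes $\omega(\Match)=\sum_{(i,\overline j)\in\Match}h_{ij}$ among all perfect matchings; I would deduce this by the exchange-and-induction argument of Proposition~\ref{prop:extendeddyckisregular}, replacing any crossing edge in a minimizer by the uncrossed one without increasing the largest exponent occurring, and then taking $c$ large enough that a strictly smaller maximal exponent forces a strictly smaller value of $\omega$. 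The same computation shows that $\ratdyckptri{rn}{n}$ is the pushing triangulation for any linear order on the boxes refining $(i,\overline j)<(i',\overline j')\Leftrightarrow\lambda(i,\overline j)<\lambda(i',\overline j')$, exactly as in Proposition~\ref{prop:pushingdyck}.

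I expect the main obstacle to be the rational supports axiom: one must verify that the required rotation of the cyclic subword on $I\cup\overline J$ exists, that it is essentially unique (two valid rotations differ by a shift through complete balanced blocks and hence induce the same matching), and — the genuinely new point — that the resulting matching really does satisfy $i_k\leq rj_k$ for all $k$, which is where the asymmetry between the $rn$ indices of the first factor and the $n$ indices of the second enters and where the $r=1$ arguments of Proposition~\ref{pro:dyckmatchensemble} do not transfer verbatim. A secondary but fiddly issue is keeping track of the modular arithmetic in the linkage axiom so that every exchanged matching lands back in one of the $n$ cyclic classes defining $\cM_{(rn,n)}$.
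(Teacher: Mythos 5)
Your strategy is sound and would very likely go through, but it is a genuinely different and far more laborious route than the paper's. The paper proves this theorem in one line, as an immediate corollary of the observation that $\ratdyckptri{rn}{n}$ is the \emph{restriction of $\dyckptri{rn}$ to a face} of $\productp{rn-1}{rn-1}$, namely the face spanned by the vertices $(\bfe_i,\bfe_{rj})$ with $i\in[rn]$, $j\in[n]$ (in the grid picture: delete the rows of $\gr{rn}{rn}$ not indexed by multiples of $r$ and relabel $\overline j\mapsto \overline j/r$; only the shifts divisible by $r$ survive as full-dimensional simplices). Since the restriction of a triangulation to a face is a triangulation of that face, and restrictions of regular triangulations are regular, both claims are inherited instantly from Theorem~\ref{thm:dyck_triangulation} applied with $rn$ in place of $n$. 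Under this reduction the two issues you single out as the main obstacles --- the rational supports axiom (existence, essential uniqueness, and validity of the rotation of the interleaved cyclic word) and the modular bookkeeping in the linkage axiom --- simply evaporate: they were already settled once and for all in Proposition~\ref{pro:dyckmatchensemble} for the square case. Your intrinsic approach does buy something the paper does not state explicitly, namely a self-contained matching-ensemble description of $\ratdyckptri{rn}{n}$ and an explicit height function $h_{ij}=c^{\,rj-i \pmod{rn}}$ (which is, reassuringly, exactly the restriction of the heights of Proposition~\ref{prop:pushingdyck} for $\dyckptri{rn}$ to the rows $rj$), but as a proof of the theorem it re-derives machinery that the face-restriction argument gets for free. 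If you pursue your route, do also justify that the $n$ cyclic shifts of each $(rn,n)$-Dyck path are pairwise distinct (this is where coprimality of $n$ and $rn-1$ enters via the cycle lemma), since your volume count $n\cdot\ratcatnum{n}{rn-1}=\binom{(r+1)n-2}{n-1}$ silently assumes the orbits are free.
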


This theorem is immediate corollary of the following observation.

\begin{lemma}
 The restriction of $\dyckptri{rn}$ to the facet of $\productp{rn-1}{rn-1}$
spanned by the vertices $(\bfe_i,\bfe_{rj})$ with $i\in [rn]$, $j\in [n]$
coincides with $\ratdyckptri{rn}{n}$.
\end{lemma}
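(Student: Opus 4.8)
The plan is to exhibit an explicit bijection between the maximal simplices of $\ratdyckptri{rn}{n}$ and the restriction of $\dyckptri{rn}$ to the indicated facet, and to show this bijection respects the triangulation structure. First I would set up the notation carefully: write $F\subseteq\productp{rn-1}{rn-1}$ for the facet spanned by $\{(\bfe_i,\bfe_{rj}):i\in[rn],\ j\in[n]\}$; in the bipartite graph representation this is the induced subgraph $\bipartitep{[rn]}{\overline{rJ}}$ where $\overline{rJ}=\{\overline r,\overline{2r},\ldots,\overline{rn}\}$. Relabelling $\overline{rj}\mapsto\overline j$ identifies $F$ with $\productp{rn-1}{n-1}$, so that the restriction $\dyckptri{rn}|_F$ becomes a triangulation of $\productp{rn-1}{n-1}$; the claim is that under this relabelling it is exactly $\ratdyckptri{rn}{n}$.

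The core computation is a translation of the defining inequalities. A maximal simplex of $\dyckptri{rn}$ is (the cyclic shift of) a Dyck path in $\gr{rn}{rn}$, i.e.\ a monotone path whose squares $(i,\overline{j'})$ satisfy $i\le j'$. Intersecting such a path with the columns $\overline{j'}\in\{\overline r,\overline{2r},\ldots,\overline{rn}\}$ and discarding the rest, one obtains (after relabelling $\overline{rj}\mapsto\overline j$) a monotone path in $\gr{rn}{n}$ whose squares $(i,\overline j)$ satisfy $i\le rj$ — precisely a $(rn,n)$-Dyck path. I would check that this restriction map is a bijection from Dyck paths in $\gr{rn}{rn}$ that appear (after restriction) as full-dimensional simplices onto $(rn,n)$-Dyck paths; the relevant point is that restricting a monotone lattice path to an arithmetic subsequence of columns yields a monotone path of the correct length ($rn+n-1$ steps), and the height-monotonicity plus the cardinality count $rn\cdot\ratcatnum{n}{rn-1}=\binom{(r+1)n-2}{n-1}$ guarantees nothing is lost or doubled. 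I would also verify that the cyclic action on $\dyckptri{rn}$ that sends $(i,\overline{j'})\mapsto(i+1\!\pmod{rn},\overline{j'}+1\!\pmod{rn})$ restricts, on the columns $\overline{rj}$, to the action $(i,\overline j)\mapsto(i+r\!\pmod{rn},\overline j+1\!\pmod n)$ used in the definition of $\ratdyckptri{rn}{n}$ — this is where the shift by $r$ in the first factor comes from, and it is the one slightly delicate bookkeeping point.

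Once the maximal simplices are matched up, the fact that $\dyckptri{rn}|_F$ is a triangulation of $F$ is automatic: the restriction of any triangulation of a polytope to a face is a triangulation of that face, and $\dyckptri{rn}$ is a triangulation by Theorem~\ref{thm:dyck_triangulation}. So $\ratdyckptri{rn}{n}$, having the same maximal simplices, is that triangulation. Regularity is inherited the same way: by Proposition~\ref{prop:pushingdyck}, $\dyckptri{rn}$ is induced by the height function $h_{ij'}=c^{j'-i\!\pmod{rn}}$; restricting this to the vertices $(\bfe_i,\bfe_{rj})$ and relabelling gives the height $h_{ij}=c^{rj-i\!\pmod{rn}}$ on $\productp{rn-1}{n-1}$, and the restriction of a regular subdivision to a face is regular (with the restricted height function). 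This yields both assertions of the theorem stated just before the lemma.

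\textbf{Main obstacle.} I expect the only real work is the careful verification that restriction-to-columns is a bijection on maximal cells that conjugates the two cyclic actions correctly; the arithmetic of the shift by $r$ modulo $rn$ versus the shift by $1$ modulo $n$ has to be tracked precisely, and one must confirm that a cyclically shifted Dyck path in $\gr{rn}{rn}$ whose squares already miss some column $\overline{rj}$ still restricts to a well-defined monotone path — this is where a clean statement of "monotone path restricted to an arithmetic progression of columns" pays off. Everything else is a formal consequence of restriction of (regular) triangulations to faces together with the Catalan/Fuss–Catalan identity doing the counting.
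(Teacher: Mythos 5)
Your overall strategy --- restrict to the sub-grid of rows indexed by multiples of $r$, relabel $\overline{rj}\mapsto\overline j$, translate the inequality $i\le j'$ into $i\le rj$, check that the order-$rn$ cyclic action induces the order-$n$ action $(i,\overline j)\mapsto(i+r\pmod{rn},\overline j+1\pmod n)$, and close with a volume count --- is exactly the route the paper takes (it declares the proof straightforward and only sketches this). However, two steps as written are incorrect. First, it is not true that restricting a monotone lattice path in $\gr{rn}{rn}$ to an arithmetic subsequence of rows always "yields a monotone path of the correct length ($rn+n-1$ steps)". The retained squares form a full-dimensional simplex of the facet only when \emph{all} horizontal steps of the big Dyck path occur in the kept rows $\overline r,\overline{2r},\dots,\overline{rn}$ (equivalently, the path runs straight up between consecutive kept rows); otherwise the retained squares fail to cover every column, the corresponding bipartite graph is a disconnected forest, and the restriction is a face of dimension strictly less than $rn+n-2$. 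This is precisely the subtlety the paper flags: in the orbit of such a big Dyck path, only the $n$ shifts divisible by $r$ restrict to maximal simplices of the facet, while the other $rn-n$ shifts contribute only lower-dimensional faces. Your bijection must therefore be set up between $(rn,n)$-Dyck paths and this restricted class of big Dyck paths, not all of them; taken literally, your claim would make every maximal simplex of $\dyckptri{rn}$ restrict to a maximal one of the facet, which overcounts badly. Second, the counting identity is $n\cdot\ratcatnum{n}{rn-1}=\binom{(r+1)n-2}{n-1}$, not $rn\cdot\ratcatnum{n}{rn-1}$: each $(rn,n)$-Dyck path generates an orbit of size $n$ (the shifting action has order $n$), and $\binom{(r+1)n-2}{n-1}$ is the normalized volume of $\productp{rn-1}{n-1}$. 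With the corrected count the "nothing lost or doubled" conclusion does go through. (A minor point: in the paper's grid convention the indices $\overline{rj}$ label rows, not columns.) The remarks on inheriting the triangulation property and regularity by restriction to a face are fine.
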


Since the proof is straightforward, we provide an illustrative example: in Figure~\ref{fig:rationaldyck} we obtain $\ratdyckptri{4}{2}$
from $\dyckptri{4}$ (which was depicted in Figure~\ref{fig:dyck4}).
In general, to recover $\ratdyckptri{rn}{n}$ from $\dyckptri{rn}$, we just
need to remove the rows of the grid $\gr{rn}{rn}$ that are not labeled by
multiples of~$r$; then we relabel the rows by $\overline j\mapsto\overline 
j/r$. Observe that not all simplices in the orbit of a $({rn,rn})$-Dyck path in
$\gr{rn}{rn}$ give simplices of $\productp{rn-1}{n-1}$ of maximal dimension, but
only those obtained by a shift divisible by $r$.            

\begin{figure}[htbp]
\centering
\includegraphics[width=.8\textwidth]{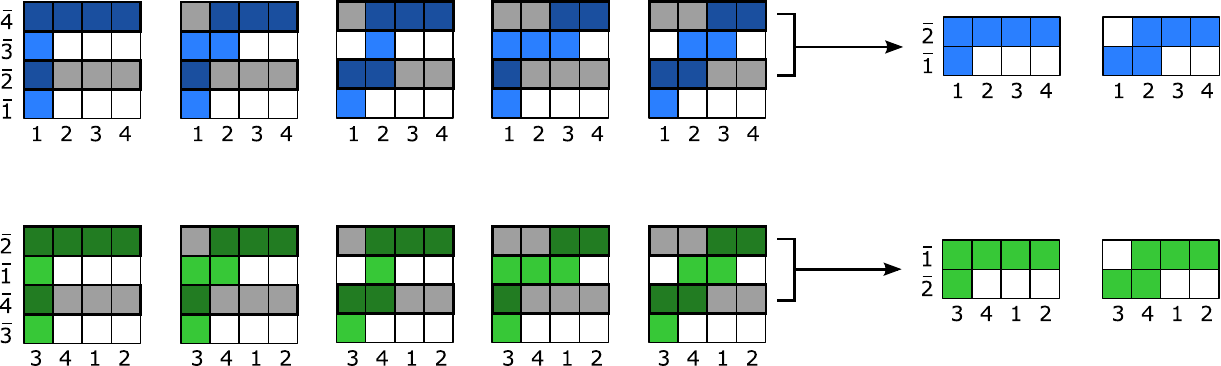}  
\caption{Obtaining $\ratdyckptri{4}{2}$ from $\dyckptri{4}$.}
\label{fig:rationaldyck} 
\end{figure}                    

The arguments from Section~\ref{sec:DyckTriangulation} leading to the non-extendable triangulation in Theorem~\ref{theorem:non-extend} can be reproduced in the setting of the $({rn,n})$-Dyck path triangulation:
\begin{itemize}
\item Restricting the extended Dyck path triangulation
$\dyckptriext{rn}$ to the facet of $\productp{rn}{rn-1}$
spanned by the vertices $(\bfe_i,\bfe_{rj})$ with $i\in [rn+1]$, $j\in [n]$ defines the \defn{extended $({rn,n})$-Dyck path triangulation}
$\ratdyckptriext{rn}{n}$. 
\item $\ratdyckptriext{rn}{n}$ bears the Dyck path triangulation $\dyckptri{n}$ on the face $\bfF$ of $\productp{rn-1}{n-1}$ spanned by $\{(\bfe_{ri},\bfe_j)\colon i\in[n], j\in[n]\}$.
\item In the restriction of $\ratdyckptriext{rn}{n}$ to
$\semiskelp{rn-1}{n-1}{n-1}$, we can flip the triangulation of $\bfF$ to
$\dyckptriflip{n}$, without altering the triangulations of the remaining faces
of $\semiskelp{rn-1}{n-1}{n-1}$. In particular, this supplies further examples
of non-extendable partial triangulations of~$\semiskelp{rn-1}{n-1}{n-1}$.
\end{itemize}

We end the article with an open question. If $a,b\in \NN$ are coprime, then 
\[a\cdot \ratcatnum{a}{b}=\binom{a+b-1}{a-1} \hspace{1cm} \text{or} \hspace{1cm} (a+b)\cdot \ratcatnum{a}{b}=\binom{a+b}{a}\]
\begin{question}
Is there a Dyck path triangulation of $\productp{a-1}{b}$ by rational Dyck paths that captures the first identity? or, is there a Dyck path triangulation of $\productp{a}{b}$ by rational Dyck paths that captures the second identity?
\end{question}


\bibliographystyle{amsplain}
\bibliography{dyckptri}

\providecommand{\bysame}{\leavevmode\hbox to3em{\hrulefill}\thinspace}
\providecommand{\MR}{\relax\ifhmode\unskip\space\fi MR }
\providecommand{\MRhref}[2]{%
  \href{http://www.ams.org/mathscinet-getitem?mr=#1}{#2}
}
\providecommand{\href}[2]{#2}
\begin{thebibliography}{10}

\bibitem{ArdilaBilley2007}
Federico Ardila and Sara Billey, \emph{{Flag arrangements and triangulations of
  products of simplices}}, Adv. Math. \textbf{214} (2007), no.~2, 495--524.

\bibitem{ArdilaCeballos2013}
Federico Ardila and Cesar Ceballos, \emph{{Acyclic systems of permutations and
  fine mixed subdivisions of simplices}}, Discrete Comput. Geom. \textbf{49}
  (2013), no.~3, 485--510.

\bibitem{ArdilaDevelin2009}
Federico Ardila and Mike Develin, \emph{{Tropical hyperplane arrangements and
  oriented matroids}}, Math. Z. \textbf{262} (2009), no.~4, 795--816.

\bibitem{armstrong_results_2013}
Drew Armstrong, Christopher R.~H. Hanusa, and Brant~C. Jones, \emph{Results and
  conjectures on simultaneous core partitions},
  \href{http://arxiv.org/abs/1308.0572}{arXiv:1308.0572v1} (2013), 17 pages.

\bibitem{ArmstrongRhoadesWilliams2013}
Drew Armstrong, Brendon Rhoades, and Nathan Williams, \emph{{Rational
  associahedra and noncrossing partitions}}, Electron. J. Combin. \textbf{20}
  (2013), no.~3, Paper 54, 27.

\bibitem{BabsonBillera}
Eric~K. Babson and Louis~J. Billera, \emph{{The geometry of products of
  minors}}, Discrete Comput. Geom. \textbf{20} (1998), no.~2, 231--249.

\bibitem{OrientedMatroids1993}
Anders Bj{\"o}rner, Michel {Las Vergnas}, Bernd Sturmfels, Neil White, and
  G{\"u}nter~M. Ziegler, \emph{{Oriented matroids.}}, {Encyclopedia of
  Mathematics and Its Applications. 46. Cambridge: Cambridge University Press.
  516 p. }, 1993.

\bibitem{ConcaHostenThomas2005}
Aldo Conca, Serkan Hosten, and Rekha~R. Thomas, \emph{Nice initial complexes of
  some classical ideals}, Algebraic and geometric combinatorics, Contemp. Math.
  \textbf{423} (2007), 11--42.

\bibitem{DeLoera1996}
J.~A. de~Loera, \emph{{Nonregular triangulations of products of simplices}},
  Discrete Comput. Geom. \textbf{15} (1996), no.~3, 253--264.

\bibitem{DeLoeraRambauSantos}
Jes{\'u}s~A. {De Loera}, J{\"o}rg Rambau, and Francisco Santos,
  \emph{{Triangulations}}, {Algorithms and Computation in Mathematics},
  vol.~25, Springer-Verlag, Berlin, 2010, Structures for algorithms and
  applications.

\bibitem{SturmfelsDevelin2004}
Mike Develin and Bernd Sturmfels, \emph{{Tropical convexity}}, Doc. Math.
  \textbf{9} (2004), 1--27 (electronic).

\bibitem{Dey1993}
Tamal~Krishna Dey, \emph{{On counting triangulations in $d$ dimensions.}},
  Computational Geometry: Theory and Applications \textbf{3} (1993), no.~6,
  315--325.

\bibitem{GKZ}
I.~M. Gelfand, M.~M. Kapranov, and A.~V. Zelevinsky, \emph{{Discriminants,
  resultants, and multidimensional determinants}}, {Mathematics: Theory \&
  Applications}, Birkh{\"a}user Boston Inc., Boston, MA, 1994.

\bibitem{Haiman1991}
Mark Haiman, \emph{{A simple and relatively efficient triangulation of the
  {$n$}-cube}}, Discrete Comput. Geom. \textbf{6} (1991), no.~4, 287--289.

\bibitem{HillarSullivant12}
Christopher~J Hillar and Seth Sullivant, \emph{{Finite Gr{\"o}bner bases in
  infinite dimensional polynomial rings and applications}}, Adv. Math.
  \textbf{229} (2012), no.~1, 1--25.

\bibitem{HostenSullivant07}
Serkan Ho\c{s}ten and Seth Sullivant, \emph{A finiteness theorem for {M}arkov
  bases of hierarchical models}, J. Combin. Theory Ser. A \textbf{114} (2007),
  no.~2, 311--321.

\bibitem{Horn2012}
Silke Horn, \emph{{A topological representation theorem for tropical oriented
  matroids}}, {24th {I}nternational {C}onference on {F}ormal {P}ower {S}eries
  and {A}lgebraic {C}ombinatorics ({FPSAC} 2012)}, {Discrete Math. Theor.
  Comput. Sci. Proc., AR}, Assoc. Discrete Math. Theor. Comput. Sci., Nancy,
  2012, pp.~135--146.

\bibitem{OhYoo2011}
Suho Oh and Hwanchul Yoo, \emph{{Triangulations of
  {$\Delta_{n-1}\times\Delta_{d-1}$} and tropical oriented matroids}}, {23rd
  {I}nternational {C}onference on {F}ormal {P}ower {S}eries and {A}lgebraic
  {C}ombinatorics ({FPSAC} 2011)}, {Discrete Math. Theor. Comput. Sci. Proc.,
  AO}, Assoc. Discrete Math. Theor. Comput. Sci., Nancy, 2011, pp.~717--728.

\bibitem{suho_triangulations_2013}
\bysame, \emph{{Triangulations of $\Delta_{n-1} \times \Delta_{d-1}$ and
  Matching Ensembles}},
  \href{http://arxiv.org/abs/1311.6772}{arXiv:1311.6772v1} (2013), 13 pages.

\bibitem{OrdenSantos2003}
David Orden and Francisco Santos, \emph{{Asymptotically efficient
  triangulations of the {$d$}-cube}}, Discrete Comput. Geom. \textbf{30}
  (2003), no.~4, 509--528.

\bibitem{Santos2000}
Francisco Santos, \emph{{A point set whose space of triangulations is
  disconnected}}, J. Amer. Math. Soc. \textbf{13} (2000), no.~3, 611--637.

\bibitem{Santos2005}
\bysame, \emph{{The {C}ayley trick and triangulations of products of
  simplices}}, Integer Points in Polyhedra -- Geometry, Number Theory, Algebra,
  Optimization, Contemp. Math. \textbf{374} (2005), 151--177.

\bibitem{Santos2013}
\bysame, \emph{{Some acyclic systems of permutations are not realizable by
  triangulations of a product of simplices}}, Algebraic and Combinatorial
  Aspects of Tropical Geometry, Contemp. Math. \textbf{589} (2013), 317--328.

\bibitem{Snowden13}
Andrew Snowden, \emph{{Syzygies of {S}egre embeddings and {$\Delta
  $}-modules}}, Duke Math. J. \textbf{162} (2013), no.~2, 225--277.

\bibitem{Sturmfels1996}
Bernd Sturmfels, \emph{{Gr{\"o}bner bases and convex polytopes}}, {University
  Lecture Series}, vol.~8, American Mathematical Society, Providence, RI, 1996.

\end{thebibliography}
\end{document}